\theoremstyle{plain}
\newtheorem{theo}{Theorem}[section]
\newtheorem{lemma}[theo]{Lemma}
\newtheorem{prop}[theo]{Proposition}
\newtheorem{coro}[theo]{Corollary}
\theoremstyle{definition}
\theoremstyle{remark}
\newtheorem{rema}[theo]{Remark}
\def\cal{\mathcal}
\def\a{\alpha}
\def\b{\beta}
\def\g{\gamma}
\def\G{\Gamma}
\def\ad{{\rm ad}}
\def\id{{\rm id}}
\def\deg{{\rm deg}\,}
\def\PU{{\rm PU}}
\def\SU{{\rm SU}}
\def\GL{{\rm GL}}
\def\End{{\rm End}}
\def\Hom{{\rm Hom}}
\def\U{{\rm U}}
\def\SL{{\rm SL}}
\def\SO{{\rm SO}}
\def\rk{{\rm rk}}
\def\Ker{{\rm Ker\,}}
\def\Im{{\rm Im\,}}
\def\vol{{\rm vol}}
\def\Sp{{\rm Sp}}
\def\C{{\mathbb C}}
\def\R{{\mathbb R}}
\def\P{{\mathbb P}}
\def\fd{\longrightarrow}
\def\pfd{\rightarrow}
\def\la{\langle}
\def\ra{\rangle}
\def\ov{\overline}
\def\t{\theta}
\def\ds{\displaystyle}
\def\e{\varepsilon}
\def\o{\omega}
\def\mf{Y}
\def\Y{{\mathcal Y}}
\def\L{{\mathcal L}}
\def\F{{\mathcal F}}
\def\T{{\mathcal T}}
\def\S{{\mathcal S}}
\def\mg{{\mathfrak m}}
\def\hg{{\mathfrak h}}
\def\sg{{\mathfrak s}}
\def\r{\rho}
\def\tr{{\rm tr}\,}
\def\V{{\mathbb V}}
\def\W{{\mathbb W}}
\def\E{{\mathbb E}}
\def\s{\sigma}
\def\om{\omega}
\def\dego{{\rm deg}_{\mathcal T}}
\def\degog{{\rm deg}_{{\mathcal T}\hspace{-1pt},\,\mu_{\mathcal G}}\,}
\def\degO{{\rm deg}_{{\mathcal T}\hspace{-1pt},\,\mu}\,}
\def\into{\int_{{\mathcal T}\hspace{-1pt},\,\mu_{\mathcal G}}}
\def\intO{\int_{{\mathcal T}\hspace{-1pt},\,\mu}}
\def\stab{M}
\def\H{{{\mathbb H}^n_{\mathbb C}}}
\def\TH{T_{\H}}
\def\PTX{{{\mathbb P}T_X}}
\def\PTH{{\mathbb P}T_{{\mathbb H}^n_{\mathbb C}}}
\def\OX{{\mathcal O}_{\PTX}(-1)}
\def\F{{\mathcal F}}
\def\CP{{\mathbb{CP}}}
\def\s{{\mathfrak s}}
\def\u{{\mathfrak u}}
\def\su{{\mathfrak{su}}}
\def\Y{{\mathcal Y}}
\def\rmo{\sqrt{-1}}
\def\sing{{\mathcal S}(\mathcal F)}
\def\singsat{{S}}
\def\pig{\pi_{\cal G}}
\def\Omegag{\Omega_{\mathcal G}}
\begin{document}

\title[Maximal representations of uniform complex
  hyperbolic lattices]{Maximal representations of uniform complex
  hyperbolic lattices}
%\titlerunning{Representations of uniform complex hyperbolic lattices}

\author{Vincent Koziarz} 
\author{Julien Maubon}
\address[Vincent Koziarz]{Univ. Bordeaux, IMB, UMR 5251, F-33400 Talence, France}
\email{vkoziarz@math.u-bordeaux1.fr}
\address[Julien Maubon]{IECL, UMR 7502, Universit\'e
   de Lorraine, B. P. 70239, F-54506 Vand\oe uvre-l\`es-Nancy Cedex,
    France}
  \email{julien.maubon@univ-lorraine.fr}

\date{\today}

\sloppy

\begin{abstract}      
Let $\r$ be a maximal representation of a uniform lattice $\G\subset\SU(n,1)$, $n\geq 2$, in a classical
Lie group of Hermitian type $G$.  We prove that necessarily $G=\SU(p,q)$ with $p\geq qn$ and there exists a
holomorphic or antiholomorphic $\r$-equivariant map from the complex hyperbolic space to the
symmetric space associated to $\SU(p,q)$. This map is moreover a totally geodesic homothetic
embedding. In particular, up to a representation in a compact subgroup of $\SU(p,q)$, the
representation $\r$ extends to a representation of $\SU(n,1)$ in $\SU(p,q)$.      
\end{abstract}
 
\maketitle

\setcounter{tocdepth}{3} 
\tableofcontents

\section{Introduction}

Lattices in non compact simple Lie groups can be regrouped in two broad classes: those which are
superrigid and those which are not. A lattice $\G$ in a
simple noncompact Lie group $H$ is superrigid (over $\R$ or $\C$) if for all simple noncompact
Lie group $G$ with trivial center, every homomorphism $\G\pfd G$ with Zariski-dense image extends to a
homomorphism $H\pfd G$.     
Lattices in simple Lie groups of real rank at least 2, such as ${\rm SL}(n,\mathbb Z)$ in ${\rm
  SL}(n,\R)$ for $n\geq 3$, as well as lattices in the real rank 1 
Lie groups ${\rm Sp}(n,1)$ and ${\rm F}_4^{-20}$, are superrigid
by~\cites{Margulis,CorletteArchimedean,GromovSchoen}. This implies that these lattices are all arithmetic.   
On the other hand, lattices in the remaining simple Lie groups of real rank 1, ${\rm SO}(n,1)$
and $\SU(n,1)$, are not superrigid in general. In particular, the study of their representations
does not reduce to the study of the representations of the Lie group they live in. There are however
important differences between real hyperbolic lattices,  
i.e. lattices in  ${\rm SO}(n,1)$, and complex hyperbolic lattices, i.e. lattices in
$\SU(n,1)$. Real hyperbolic objects are softer and more flexible than their complex
counterparts. From the perspective of representations of lattices, for example, it is sometimes
possible to deform non trivially lattices of ${\rm SO}(n,1)$ in ${\rm SO}(m,1)$, $m>n\geq 3$, see 
e.g.~\cite{JohnsonMillson}. The analogous statement does not
hold for lattices in $\SU(n,1)$, $n\geq 2$: W.~Goldman and
J.~Millson~\cite{GoldmanMillsonLocalRigidity} proved that if $\G\in\SU(n,1)$, $n\geq 2$, is a
uniform lattice and if 
$\r:\G\pfd\SU(m,1)$, $m\geq n$, is the composition of the inclusion $\G\hookrightarrow\SU(n,1)$ with
the natural embedding $\SU(n,1)\hookrightarrow\SU(m,1)$, then $\r$, although not necessarily
infinitesimally rigid, is locally rigid. From a maybe more subjective point of view,   
non arithmetic lattices in $\SO(n,1)$ can be constructed for all $n$~\cite{GPS} but there are no similar constructions
in the complex case and examples of non arithmetic lattices in $\SU(n,1)$ are very difficult to
come by (and none are known for $n\geq 4$).  

\medskip

We will be interested here in global rigidity results for representations of complex hyperbolic
lattices in semisimple Lie groups of Hermitian type with no compact factors which generalize the
local rigidity we just 
mentioned. Recall that a Lie group $G$ is of Hermitian type if its associated
symmetric space is a Hermitian symmetric space. The classical noncompact groups of Hermitian type are
$\SU(p,q)$ with $p\geq q\geq 1$, $\SO_0(p,2)$ with $p\geq 3$, ${\rm Sp}(m,\R)$ with $m\geq 2$ and
$\SO^\star(2m)$ with $m\geq 4$.  

\medskip

Let $\G$ be a lattice in $\SU(n,1)$. The group $\G$ acts on
complex hyperbolic $n$-space $\H=\SU(n,1)/{\rm S}(\U(n)\times\U(1))$. The space $\H$ is the rank 1 Hermitian
symmetric space of non compact type and of complex dimension $n$. From a Riemannian point of view,
it is up to isometry the unique complete simply connected K\"ahler manifold of constant negative holomorphic
sectional curvature. The $\SU(n,1)$-invariant metric on $\H$ will be normalized so that its holomorphic
sectional curvature is $-1$. As a bounded symmetric domain, $\H$ is biholomorphic to the unit ball
in $\C^n$. 

For simplicity in this introduction, and because this is needed in our main result, the lattice $\G$
is assumed to be uniform (and torsion free) unless otherwise specified, so that the quotient
$X:=\G\backslash\H$ is a compact K\"ahler manifold. 

\smallskip     

Let also $G$ be a semisimple Lie group of Hermitian type without compact factors, $\Y$ the
symmetric space associated to $G$ and $\r$ a 
representation of $\G$ in $G$, i.e. a group homomorphism $\r:\G\pfd G$.
There is a natural way to measure the ``complex size'' of the representation $\rho$ by using the
invariant K\"ahler forms of the involved symmetric spaces. The {\em Toledo invariant} of $\r$ is defined       
as follows: 
$$
\tau(\rho)=\frac{1}{n!}\int_X f^\star\o_\Y\wedge\o^{n-1}, 
$$
where $f:\H\pfd\Y$ is any $\rho$-equivariant map, $\o$ is the K\"ahler form of $X$ coming from the invariant
K\"ahler form of $\H$, $\o_\Y$ is the $G$-invariant
K\"ahler form of $\Y$ normalized so that its holomorphic 
sectional curvatures are in $[-1,-1/\rk\Y]$, and $f^\star\o_\Y$ is understood as a 2-form on $X$.    

It should be noted that $\rho$-equivariant maps $\H\pfd\Y$ always exist, because $\Y$ is
contractible, and that any two such maps are equivariantly homotopic, so that the Toledo invariant
depends only on $\r$, not on the choice of $f$. In fact, it depends only on the connected component
of $\Hom(\G,G)$ containing $\r$, because it can be seen as a characteristic class of the flat bundle
on $X$ associated to $\r$. The definition of the Toledo invariant can
be extended to non uniform lattices with a bit more work. 

A fundamental fact about the Toledo invariant that was established in full generality by M.~Burger
and A.~Iozzi in~\cite{BI07} is that it satisfies the following {\em Milnor-Wood type inequality}: 
$$ 
|\tau(\rho)|\leq {\rm rk}(\cal Y)\,{\rm vol} (X).
$$
This allows to single out a special class of representations, namely those for which this inequality
is an equality. These are the {\em maximal representations} we are interested in. 

\medskip

The Toledo invariant was first considered for representations of surface groups, i.e. when $\G$ is
the fundamental group of a closed Riemann surface, which can be seen as a uniform
lattice in $\SU(1,1)$.  It appeared for the first time in D.~Toledo's 1979
paper~\cite{ToledoHarmonic} and more 
explicitly in~\cite{ToledoRepresentations}, where the Milnor-Wood inequality was proved for
$n=1$ and $\rk\,\Y=1$, 
namely when $G=\SU(m,1)$ for some $m\geq 1$. Toledo proved that  
maximal representations are faithful with discrete image, and stabilize a complex line in 
complex hyperbolic $m$-space, thus generalizing a theorem of Goldman for $G={\rm
  SL}(2,\R)$~\cites{GoldmanThesis,GoldmanComponents}. Analogous results in the non uniform case were
proved in~\cites{BI07,KozMobRank1}. L.~Hernandez
showed in~\cite{Hernandez} that maximal representations of surface groups in $G=\SU(p,2)$, $p\geq 2$, are also discrete
and faithful and stabilize a symmetric subspace associated to the subgroup $\SU(2,2)$ in $\Y$. Maximal representations of surface groups are now known to be reductive, discrete and
faithful, to stabilize a maximal tube type subdomain in $\Y$, and in general to carry interesting
geometric structures, see e.g.~\cites{BIW,GW}. They are nevertheless quite flexible. They can for
example always be deformed to representations that are Zariski-dense in the subgroup corresponding
to the tube type subdomain they stabilize \cite{BIW}. 

\medskip

On the other hand, as indicated by the local rigidity result of~\cite{GoldmanMillsonLocalRigidity},
maximal representations of higher dimensional complex hyperbolic lattices, that 
is, lattices in $\SU(n,1)$ for $n$ greater than 1, are expected to be much more rigid. 

This was confirmed for rank 1 targets by K.~Corlette
in~\cite{CorletteFlatGBundles} (the statement was given for representations maximizing the so-called
volume  instead of the Toledo invariant but the proof for the Toledo invariant is essentially the
same). Corlette proved that if $\rho$ is a volume-maximal representation of a uniform lattice
$\G\subset \SU(n,1)$, $n\geq 2$,  in $G=\SU(m,1)$, then there exists a $\rho$-equivariant 
holomorphic totally geodesic embedding $\H\pfd{\mathbb H}_\C^m$.  
This answered a conjecture of Goldman and Millson and implies   
the local rigidity of~\cite{GoldmanMillsonLocalRigidity}. This was later shown to hold also in the case of non uniform
lattices~\cites{BI08,KozMobRank1}.  

\smallskip

For $n\geq 2$ and higher rank targets, the situation was until now far from being
well understood. The case of real rank 2 target Lie groups has been treated in~\cite{KozMobRank2}
(for uniform lattices), but the proof did not go through to higher ranks. In \cite{BIW09},
  M.~Burger, A.~Iozzi and A.~Wienhard proved that maximal representations are necessarily reductive
  (this holds also for $n=1$ and without assuming the lattice to be uniform).  
Very recently, M.~B.~Pozzetti~\cite{Pozzetti} succeeded in generalizing the approach of~\cite{BI08} and
proved that for $n\geq 2$ 
there are no Zariski dense maximal representations of a lattice $\G\subset\SU(n,1)$ in
$\SU(p,q)$ if $p>q>1$. There is no rank restriction in her result, and it is also valid for non
uniform lattices, but as of now it seems to depend strongly on having a non tube type target (this
is the meaning of the assumption $p\neq q$).  

\medskip

In this paper, we prove the expected global rigidity for maximal representations of uniform lattices
of $\SU(n,1)$, $n\geq 2$, in all classical Lie groups of Hermitian type:

\begin{theo}\label{main}
  Let $\G$ be a uniform (torsion free) lattice in $\SU(n,1)$, $n\geq 2$. Let $\rho$ be a group
  homomorphism of $\G$ in a classical noncompact Lie group of Hermitian type $G$, i.e. $G$ is either
  $\SU(p,q)$ with $p\geq q\geq 1$, $\SO_0(p,2)$ with $p\geq 3$,  ${\rm Sp}(m,\R)$ with $m\geq 2$, or
  $\SO^\star(2m)$ with $m\geq 4$.  

If $\r$ is maximal, then $G=\SU(p,q)$ with $p\geq qn$, $\r$ is
  reductive and there exists a holomorphic or antiholomorphic $\r$-equivariant
  map from $\H$ to the symmetric space $\Y_{p,q}$ associated to $\SU(p,q)$.
\end{theo}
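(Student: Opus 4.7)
The plan is to approach the theorem by combining equivariant harmonic maps with $G$-Higgs bundle techniques on the compact K\"ahler manifold $X=\G\backslash\H$. Reductivity of a maximal $\r$ is already known from \cite{BIW09}, so Corlette's theorem yields a $\r$-equivariant harmonic map $f:\H\to\Y$. Since $X$ is K\"ahler and $\Y$ has nonpositive Hermitian curvature, the Sampson--Siu Bochner formula forces $f$ to be pluriharmonic, so that the flat bundle $E_\r$ on $X$ associated to $\r$ inherits a polystable $G$-Higgs bundle structure whose Higgs field $\t$ is essentially the $(1,0)$-derivative of $f$, decomposed according to the Harish-Chandra decomposition $\gg_\C=\kg_\C\oplus\pg^+\oplus\pg^-$.

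For $G=\SU(p,q)$, this Higgs bundle takes the form $E=V\oplus W$ with $\rk V=p$, $\rk W=q$, and Higgs field $\t=\b+\g$ where $\b:V\to W\otimes\Om_X$ and $\g:W\to V\otimes\Om_X$. The Toledo invariant translates into a multiple of $q\deg V-p\deg W$, and the Milnor--Wood inequality follows from polystability together with pointwise Cauchy--Schwarz-type estimates on $\b$ and $\g$ integrated over $X$ against $\o^{n-1}$. The crucial step is that at maximality these estimates become equalities, which forces one of $\b,\g$ to vanish identically on $X$; via the Higgs-theoretic dictionary this is equivalent to the corresponding $(1,0)$-derivative component of $f$ being zero, hence to $f$ being holomorphic or antiholomorphic. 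I would then carry out the parallel Higgs-theoretic analysis for $\SO_0(p,2)$, $\Sp(m,\R)$ and $\SO^\star(2m)$, and show in each case that maximality is incompatible with $G$ being genuinely different from $\SU(p,q)$, so that the target must effectively reduce to $\SU(p,q)$.

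Finally, the inequality $p\geq qn$ emerges by analyzing the surviving (nonzero) Higgs field on the holomorphic (or antiholomorphic) side together with its interaction with the tangent bundle $TX$: maximality pins down the rank of its pointwise image, and the Cartan-decomposition constraints coming from the $\SU(n,1)$-action force $p\geq qn$. The main obstacle I foresee is turning the integrated maximality identity into the pointwise vanishing of one Higgs component, especially for the tube-type targets $\Sp(m,\R)$ and $\SO^\star(2m)$: their Higgs bundles structurally resemble those of $\SU(m,m)$, so ruling out maximal non-$\SU(p,q)$ representations will demand a delicate simultaneous control of the Chern-class and curvature contributions in the Milnor--Wood bound, and a careful use of polystability to detect the reduction of structure group.
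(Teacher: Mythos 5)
There is a genuine gap, and it sits exactly at the step you describe as "the Milnor--Wood inequality follows from polystability together with pointwise Cauchy--Schwarz-type estimates on $\b$ and $\g$ integrated over $X$ against $\o^{n-1}$." For $n\geq 2$ and targets of rank $\geq 3$ this is precisely the approach that does not work: the subsheaves of $E=V\oplus W$ that drive the surface-group degree chase (the kernel of $\b$ and the saturation of its image) are not Higgs subsheaves for the full Higgs field on the $n$-dimensional base, ordinary polystability of $(E,\t)$ over $X$ does not bound their degrees sharply enough, and the pointwise curvature estimates only recover the weaker linear-algebra bounds that were already known to fall short of $|\tau(\r)|\leq q\,\vol(X)$ beyond rank $2$. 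The paper's essential new idea, absent from your proposal, is to pull the Higgs bundle back to the projectivized tangent bundle $\PTX$ foliated by lifts of complex geodesics: the leaves are complex curves, so restricting $\t$ to the tautological line bundle makes the one-dimensional kernel/image argument run \emph{leafwise}, and the invariant transverse measure coming from the indefinite K\"ahler form on the space of complex geodesics converts foliated degrees on $\PTX$ back into $\int_X(\cdot)\wedge\o^{n-1}$, which is what actually yields the sharp inequality.

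A second missing ingredient is dynamical. In the equality case the leafwise polystability only tells you that the degeneracy locus of $\b$ upstairs is a proper closed subset of $\PTX$ saturated under the foliation; to conclude that $\b_x(\xi)$ is injective for all $\xi\neq 0$ at a dense set of points $x\in X$ one must show that the projection of this saturated set to $X$ is still proper, and the paper does this via Ratner's orbit-closure theorem (closures of projected leaves are totally geodesic submanifolds, of which there are only countably many). Without this step the "integrated equality forces pointwise vanishing" mechanism you invoke does not close: the vanishing of $\g$ is deduced not directly from an equality in an integral estimate but from the injectivity of $\b_x(\xi)$ for \emph{all} directions $\xi$ at such points, combined with the integrability $[\t,\t]=0$ and a Hartogs-type extension that uses $n\geq 2$. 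Your outline for the tube-type groups and for the final bound $p\geq qn$ (which in the paper comes from the Ahlfors--Schwarz--Pick lemma once holomorphicity is established) is in the right spirit, but it rests on the same unproved higher-rank Milnor--Wood input.
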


As a consequence, maximal representations can be described completely: 

\begin{coro}\label{description}
Let $n\geq 2$ and $p\geq qn$. Let $\r:\G\pfd\SU(p,q)$ be a maximal representation of a uniform
torsion free lattice $\G\subset\SU(n,1)$. Then:

\ndash\, the $\r$-equivariant holomorphic or antiholomorphic map
$\H\pfd\Y_{p,q}$ whose existence is guaranteed by Theorem~\ref{main} is a  totally geodesic
homothetic embedding; it is unique up to composition by an element of $\SU(p,q)$;

\ndash\,  the representation $\r$ is faithful, discrete, and $\r(\G)$
stabilizes (and acts cocompactly on) a totally geodesic image of $\H$ in $\Y_{p,q}$, of induced
holomorphic sectional curvature $-\frac 1q$;

\ndash\, up to %{\bf \blue  inutile ? passing to  a finite index subgroup and} 
conjugacy, the representation $\r$ is a product $\r_{\rm diag}\times\r_{\rm cpt}$, where $\r_{\rm
  diag}$ is the standard diagonal embedding $\SU(n,1)\hookrightarrow \SU(n,1)^q
\hookrightarrow \SU(nq,q) 
\hookrightarrow\SU(p,q)$, and $\r_{\rm cpt}$ is a representation of $\G$ in the centralizer
of $\r_{\rm diag}(\SU(n,1))$ in $\SU(p,q)$, which is compact. 
\end{coro}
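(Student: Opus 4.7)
The plan is to deduce Corollary~\ref{description} from Theorem~\ref{main} in three steps: upgrade the holomorphic $\r$-equivariant map $f:\H\to\Y_{p,q}$ supplied by the theorem to a totally geodesic homothetic embedding, read off the uniqueness and the stabilizer structure of its image, and split $\r$ as a product. We may assume throughout that $f$ is holomorphic; the antiholomorphic case reduces to this one by post-composing with an antiholomorphic isometry of $\Y_{p,q}$.

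The first and main step is to establish that $f$ is totally geodesic and homothetic. For a holomorphic $f$, the Milnor-Wood inequality $\tau(\r)\leq q\,\vol(X)$ can be re-derived by bounding the integrand $f^\star\om_\Y\wedge\o^{n-1}$ pointwise, using that the holomorphic sectional curvatures of $\Y_{p,q}$ are normalized to lie in $[-1,-1/q]$. The maximality $\tau(\r)=q\,\vol(X)$ then forces equality in this bound at every point of $X$, and a standard eigenvalue analysis yields $f^\star\om_\Y=c\,\o$ identically, for an explicit constant $c>0$ determined by the normalizations. Combined with holomorphicity, this already gives that $f$ is a homothetic immersion. To upgrade to \emph{totally geodesic}, I would run a Gauss-equation argument: the minimum value $-1/q$ of the holomorphic sectional curvature of $\Y_{p,q}$ is attained only along tangent spaces of totally geodesic polydisc-type subsymmetric spaces, and the pointwise extremality inherited from the maximality realizes this minimum in every holomorphic tangent direction of $f$, forcing the second fundamental form to vanish. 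Alternatively, one can invoke directly the classical rigidity of holomorphic isometric embeddings between bounded symmetric domains (Calabi, Mok).

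Granted total geodesicity, the rest of the corollary follows rather mechanically. The image $\Sigma:=f(\H)$ is a totally geodesic complex submanifold of $\Y_{p,q}$ of constant holomorphic sectional curvature $-1/q$; by the standard classification such $\Sigma$ exist precisely when $p\geq qn$ and form a single $\SU(p,q)$-orbit, with standard representative the image of the diagonal embedding $\H\hookrightarrow\H^{\,q}\hookrightarrow\Y_{nq,q}\hookrightarrow\Y_{p,q}$, proving uniqueness of $f$ up to $\SU(p,q)$. Equivariance of $f$ makes $\r(\G)$ preserve $\Sigma$, and the cocompact faithful action of $\G$ on $\H$, together with the injectivity of $f$ and the torsion-freeness of $\G$, gives that $\r$ is faithful and $\r(\G)$ is discrete and cocompact in $\mathrm{Isom}(\Sigma)$. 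Letting finally $L$ denote (the identity component of) the stabilizer of the standard $\Sigma$ in $\SU(p,q)$, a block-matrix computation identifies $L$ with an almost direct product $\SU(n,1)\cdot K$, where $K$ is the centralizer of the diagonally embedded $\SU(n,1)$; this centralizer lies in the maximal compact subgroup $\mathrm{S}(\U(p)\times\U(q))$ and is therefore compact, and projecting $\r(\G)\subset L$ onto the two factors yields the decomposition $\r=\r_{\rm diag}\times\r_{\rm cpt}$.

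The main obstacle is the first step, namely converting the integral equality $\tau(\r)=q\,\vol(X)$ into total geodesicity. Obtaining the K\"ahler-form identity $f^\star\om_\Y=c\,\o$ is relatively direct from the pointwise Milnor-Wood estimate, but the vanishing of the second fundamental form requires an extra curvature input, and matching the K\"ahler and curvature normalizations of Theorem~\ref{main} with the classical holomorphic-rigidity statements needs some careful bookkeeping.
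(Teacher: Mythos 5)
Your proposal is correct and follows essentially the same route as the paper: the pointwise Ahlfors--Schwarz--Pick bound plus the integral equality forces $f^\star\o_{\Y_{p,q}}=q\,\o$, the equality case of the curvature pinching then identifies $f$ with the diagonal totally geodesic embedding (the paper quotes this as Proposition~\ref{special} from \cite{KozMobRank2}, which is exactly the curvature-extremality argument you sketch), and the remaining assertions are read off from the stabilizer of $f_{\rm diag}(\H)$ just as you describe.
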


Because as we said the Toledo invariant is constant on connected components of $\Hom(\G,G)$, this
also implies the local rigidity of maximal representations and in particular we have: 

\begin{coro}
  Let $n\geq 2$ and $p\geq qn$. Then the restriction to a uniform lattice
$\G\subset\SU(n,1)$ of the standard diagonal embedding 
  $\r_{\rm diag}:\,\SU(n,1)\hookrightarrow \SU(n,1)^q 
\hookrightarrow\SU(p,q)$  is locally rigid (up to a representation in the compact
  centralizer of $\r_{\rm diag}(\SU(n,1))$ in $\SU(p,q)$).   
\end{coro}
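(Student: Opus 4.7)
The plan is to deduce this corollary by combining Theorem~\ref{main} and Corollary~\ref{description} with the fact, recalled in the introduction, that the Toledo invariant is constant on connected components of $\Hom(\G,\SU(p,q))$.

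First, we need to check that $\r_{\rm diag}|_\G$ is itself maximal. The standard diagonal embedding $\SU(n,1) \hookrightarrow \SU(n,1)^q \hookrightarrow \SU(nq,q) \hookrightarrow \SU(p,q)$ induces a totally geodesic holomorphic $\r_{\rm diag}$-equivariant embedding $\H \hookrightarrow \Y_{p,q}$, whose image, with the metric induced from $\Y_{p,q}$, has holomorphic sectional curvature $-1/q$. Under the normalizations fixed in the introduction (holomorphic sectional curvatures of $\Y_{p,q}$ in $[-1,-1/q]$, and of $\H$ equal to $-1$), this embedding pulls $\o_{\Y_{p,q}}$ back to $q\,\o$, whence $\tau(\r_{\rm diag}|_\G) = q\,\vol(X) = \rk(\Y_{p,q})\,\vol(X)$, the maximal value permitted by the Milnor--Wood inequality.

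Next, let $\r'$ be any representation of $\G$ in $\SU(p,q)$ lying in the same connected component of $\Hom(\G,\SU(p,q))$ as $\r_{\rm diag}|_\G$. Since the Toledo invariant is constant on connected components, $\tau(\r')=q\,\vol(X)$, so $\r'$ is also maximal. Corollary~\ref{description} then provides an element $g\in\SU(p,q)$ such that $g\r' g^{-1}=\wt\r_{\rm diag}\times\r'_{\rm cpt}$, where $\wt\r_{\rm diag}$ is the restriction to $\G$ of some standard diagonal embedding $\SU(n,1)\hookrightarrow\SU(p,q)$, and $\r'_{\rm cpt}$ takes values in the compact centralizer of $\wt\r_{\rm diag}(\SU(n,1))$. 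By the first bullet of Corollary~\ref{description}, the equivariant totally geodesic embedding $\H\hookrightarrow\Y_{p,q}$ is unique up to the $\SU(p,q)$-action, so after a further conjugation we may assume $\wt\r_{\rm diag}=\r_{\rm diag}|_\G$. Specializing to $\r'$ sufficiently close to $\r_{\rm diag}|_\G$ gives the claimed local rigidity: up to conjugacy in $\SU(p,q)$, the only deformations of $\r_{\rm diag}|_\G$ are twists by a representation of $\G$ into the compact centralizer of $\r_{\rm diag}(\SU(n,1))$.

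All the real work lies upstream, in the proofs of Theorem~\ref{main} and Corollary~\ref{description}. Once those are available, the only mildly nontrivial point for the present corollary is the Toledo-invariant computation for $\r_{\rm diag}|_\G$, which is a routine curvature/normalization check.
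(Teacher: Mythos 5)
Your proposal is correct and follows essentially the same route as the paper, which deduces this corollary directly from the constancy of the Toledo invariant on connected components of $\Hom(\G,\SU(p,q))$ together with Corollary~\ref{description}; the maximality of $\r_{\rm diag}|_\G$ is implicit in the paper via Proposition~\ref{special}, which gives $f_{\rm diag}^\star\o_{\Y_{p,q}}=q\,\o$ and hence $\tau(\r_{\rm diag}|_\G)=q\,\vol(X)$. Nothing is missing.
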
 

This last corollary is in fact true without assuming the lattice $\G$ to be uniform
~\cite{Pozzetti}*{Corollary~1.5}. It is also a special case of the main result
of~\cite{Klingler}, where 
B.~Klingler gave a general algebraic condition for representations 
of uniform lattices in $\SU(n,1)$ induced by representations of $\SU(n,1)$ to be locally rigid.  

\medskip

To prove Theorem~\ref{main}, we work with a reductive representation $\r:\G\pfd G$ (non reductive
representations can be ruled out a priori by~\cite{BIW09}, or later, see \textsection \ref{nonreductive})
and we consider 
the {\em harmonic Higgs bundle} $(E,\t)$ on the closed complex hyperbolic manifold $X=\G\backslash\H$
associated to $\r$ by the work of 
K.~Corlette~\cite{CorletteFlatGBundles} and C.~Simpson~\cite{S2}. This Higgs bundle is {\em polystable}
and has a {\em real structure} which comes from the fact that it  
is constructed out of a representation in a Lie group of Hermitian type (and not
merely in the general linear group). The Toledo invariant is interpreted in this setting as the
degree of a vector bundle on $X$. See~\textsection \ref{higgs}, \textsection \ref{sec:prelim} and
\textsection \ref{sec:ineq}.
These facts can be used in some situations to (re)prove the 
Milnor-Wood inequality and study maximal representations. This has been widely done for
representations of surface groups, see e.g.~\cites{Xia, MarkmanXia, BGPG1, BGPG2}, and also, with
limited success, for higher dimensional lattices \cite{KozMobRank2}. 

The main novelty here is the study of the interplay between the Higgs bundle point of view and the
geometry and dynamics of the {\em tautological foliation} $\T$ on the
projectivized tangent bundle $\PTX$ of the complex hyperbolic manifold $X$. 

When the base (K\"ahler) manifold $Y$ of a harmonic Higgs bundle $(E,\t)\pfd Y$ comes with a smooth
  holomorphic foliation $\T$ by complex curves, and this
  foliation admits 
  an invariant transverse measure, one can investigate the behaviour of the Higgs bundle {\em along the leaves}
  of $\T$.  This is the content of~\textsection \ref{higgsfol}. The transverse measure indeed
  yields a closed current of integration and we define the {\em foliated degree}
  of a coherent   
sheaf on $Y$ by integrating its first Chern class against it. We call a 
subsheaf of $\cal O_Y(E)$ a  {\em leafwise Higgs subsheaf} of $E$ if it is invariant by
the Higgs field $\t$ in the directions tangent to the leaves. With these definitions we 
introduce notions of {\em  leafwise semistability} and {\em leafwise polystability} and we prove
(Proposition~\ref{dec}) that   
they are satisfied by the Higgs bundle $(E,\t)$ when the invariant transverse measure is induced by
an invariant
transverse volume form.

Now, there is a well-defined 
notion of {\em complex geodesics} in complex hyperbolic space $\H$. This implies that the
projectivized tangent bundle $\PTX$ of the complex hyperbolic manifold $X=\G\backslash\H$ carries 
a smooth holomorphic $1$-dimensional foliation $\T$ by lifts of tangent spaces of (local) complex
geodesics, see~\textsection \ref{cplxgeod}.  The tangential line subbundle $L$ of the tangent 
bundle of $\PTX$, i.e. the subbundle of tangent vectors tangent to 
the leaves of the foliation, identifies naturally  with the tautological line bundle $\OX$ on $\PTX$. 
The tautological foliation is endowed with a homogeneous transverse structure, where the 
$\SU(n,1)$-homogeneous space in question is the space $\cal G$ of complex geodesics of $\H$. This space
supports an invariant indefinite but non degenerate K\"ahler metric $\o_{\cal G}$, hence an
invariant volume form which defines a transverse measure $\mu_{\cal G}$ for the foliation $\T$,
cf.~\textsection \ref{sec:transverse}. The fundamental feature of the induced current of integration
  is that it enables to compute the Toledo invariant of the representation $\r$ and degrees of vector bundles
  on $X$ as foliated degrees of vector bundles on $\PTX$ (Proposition~\ref{prop:transverse}).

The idea is then to pull-back the Higgs bundle $(E,\t)\pfd X$ associated to the
  representation $\r$ to obtain a harmonic Higgs bundle $(\tilde E,\tilde\t)$ over the projectivized tangent bundle
$\PTX$ and to take advantage of the leafwise stability properties of this new Higgs bundle with
respect to the tautological foliation $\T$ and its invariant transverse measure $\mu_{\cal G}$. This 
allows to give a new proof of the   
Milnor-Wood inequality for reductive representations of uniform lattices and to gain a lot of
information in the maximal case, see e.g.~\textsection \ref{sec:reduc} for representations in
  $\SU(p,q)$. To conclude the proof one needs a 
dynamical argument to understand closures of projections to $X$ of subsets of $\PTX$ which are
saturated under the tautological foliation. This is done 
using results of M.~Ratner on unipotent flows, see~\textsection \ref{sec:closure}.   

\medskip

The interpretation of the Toledo invariant as a ``foliated Toledo number'' is 
sketched by M.~Burger and A.~Iozzi in~\cite{BI08}*{p.~183}, where it is attributed to
F.~Labourie. This point of view is indeed strongly related with their approach, and the one of
M.~B.~Pozzetti, where one wants to prove that when a representation is 
maximal, there exists an 
equivariant measurable map between the Shilov boundaries that preserves a special incidence 
geometry. In the complex hyperbolic case, this incidence geometry is the  
geometry of {\em chains}, i.e. of boundaries at infinity of complex geodesics. Tautological
foliations on the projectivized tangent bundle of manifolds carrying a holomorphic projective
structure (in particular complex hyperbolic manifolds) are also discussed and used by N.~Mok
in~\cite{Mok}.  Some time ago, without at first grasping the foliated 
side of the story, the authors of the present paper made some quickly unsuccessful attempts
at working with Higgs bundles on the projectivized tangent bundle. Reading
F.~Labourie's suggestion in~\cite{BI08} and N.~Mok's article~\cite{Mok} 
encouraged them to try again. 

Combining foliations and Higgs bundle theory to prove rigidity properties of lattices is of course
  reminiscent of the work of M. Gromov on foliated harmonic maps
  in~\cites{Gromov1,Gromov2}. M. Gromov considered foliations by (lifts of) totally geodesic
  subspaces in (bundles over) locally 
  symmetric spaces of which the tautological foliation discussed here is a particular case. There is
  however a 
  difference. In his application to quaternionic rigidity \cite{Gromov2}*{\textsection 7.E} (see
  also~\cite{CorletteArchimedean} 
  for a different proof), M. Gromov uses his existence theorem for foliated harmonic maps to produce maps on
  quaternionic hyperbolic space which 
  are harmonic along totally geodesic complex subspaces but not (a priori)
  harmonic on the whole space, because the harmonic map on the whole space is not (a priori)
  harmonic when restricted to these subspaces. In this paper, since we work on K\"ahler manifolds, and
  our leaves are complex curves, harmonic maps are pluriharmonic (see \textsection
  \ref{higgs}) and their restrictions to the leaves are automatically harmonic. Therefore foliated
  harmonic maps are not needed and neither is a fully fledged theory of foliated Higgs bundles
  (e.g. on real manifolds foliated by K\"ahler submanifolds), although such a theory would probably be
  interesting to develop.

\bigskip

{\em Acknowledgements.} We are very grateful to Matei Toma for the time he accepted to spend
discussing various topics, and in particular for his help concerning complex
analytic aspects of foliations. We thank Jean-François Quint, Benoît Claudon and Fr\'ed\'eric Touzet
for useful conversations around the subject of this paper. We would also like to thank
  the referees for their valuable comments and suggestions which aided a lot in
  improving the quality and readability of the paper. 

\section{Higgs bundles on foliated K\"ahler manifolds}\label{higgsfolkaehler}

In this section, we first give a brief account on harmonic Higgs bundles on a compact K\"ahler
  manifold $Y$. When the manifold $Y$ admits a holomorphic foliation by complex curves and the
  foliation admits 
    an invariant transverse 
  measure, we define a notion of foliated degree for $\cal O_Y$-coherent sheaves. If moreover the
  transverse measure is induced by
  an invariant transverse volume form, we exhibit some stability properties of the Higgs bundle with
  respect to the foliated degree.

\subsection{Harmonic Higgs bundles}\label{higgs}\hfill

\smallskip

Let $\mf$ be a compact manifold, $\G$ its fundamental group, and $\r:\G\pfd G$ a group
  homomorphism in a real algebraic semisimple Lie group without compact factors
  $G\subset \SL(N,\C)$. 

We assume in this section that $\r$ is {\em reductive}, i.e. 
the Zariski closure of  $\rho(\G)$ in $G$ is a reductive group. By a fundamental result of 
K.~Corlette~\cite{CorletteFlatGBundles}, this is equivalent to the existence of a
$\r$-equivariant {\em harmonic map} $f$ from the universal cover $\tilde \mf$ of $\mf$ to the
symmetric space $\Y$ associated to $G$.

When the manifold $\mf$ is moreover K\"ahler, it follows from a Bochner formula
due to J.~H.~Sampson~\cite{Sampson} and Y.-T.~Siu~\cite{Siu}, that the harmonic map $f$ is 
pluriharmonic, and that the image of the $(1,0)$-part $d^{1,0}f:T^{1,0}\tilde \mf\pfd T^\C\Y$
of its complexified differential is Abelian (as a subspace of the
complexification of the Lie algebra of $G$). 
 This has been shown by C.~Simpson~\cites{S1,S2} to give a {\em harmonic Higgs bundle} $(E,\t)$ on
 $\mf$. 

 The fact that the representation $\r$ takes its values in the real group $G$ endows the Higgs
  bundle with a {\em real 
  structure}. This real structure is important and will be discussed later in different particular
cases. However, at this point it is not relevant and in this section we see $\r$ as a homomorphism 
in $\SL(N,\C)$. 

\medskip

The bundle $E$, as a $C^\infty$-bundle, is the flat complex vector bundle of rank $N$ with holonomy $\rho$. The Higgs field $\t$ is a holomorphic $(1,0)$-form
with values in $\End(E)$, which can be seen as the $(1,0)$-part $d^{1,0}f$ 
of the complexified differential of the harmonic map $f$. It satisfies the {\em integrability
  condition} $[\t,\t]=0$. The
$\r$-equivariant harmonic map
can also be thought of as a reduction of the structure group of $E$ to the maximal compact
  subgroup $\SU(N)$ of $\SL(N,\C)$, see e.g.~\cite{CorletteFlatGBundles}. Therefore choosing a
  $\SU(N)$-invariant Hermitian metric on a 
  fiber of $E$ defines 
a Hermitian metric on $E$, called the {\em harmonic metric}, which has the following properties. If $D$ is
the flat connection on $E$ and $\nabla$ the component of $D$ which preserves this metric, then 
$(\nabla'')^2=0$ and $\nabla''\t=0$, so that $\nabla''$ defines a holomorphic structure on $E$ for
which $\t$ is holomorphic. Moreover $D$, $\nabla$ and $\t$ are related by
$$
D=\nabla+\t+\t^\star,
$$         
where $\t^\star$ is the adjoint of $\t$ w.r.t. the harmonic metric. This, together with the
Chern-Weil formula, implies that $(E,\t)$ is a {\em polystable} Higgs bundle of degree 0 on $\mf$,
see~\cite{S1} 
and the proof of Proposition~\ref{dec} below. This means first that $(E,\t)$ is a {\em
  semistable} Higgs bundle, namely that if ${\cal F}\subset {\cal O}_\mf(E)$ is a Higgs subsheaf of
$E$, i.e. a subsheaf such that $\theta({\cal F\otimes T_\mf})\subset {\cal F}$, 
then 
$$
\deg {\cal F}:=\frac{1}{m!}\,\int_\mf c_1(\cal F)\wedge\o_\mf^{m-1}\,\leq\,0\,=\,\deg E
$$
where $m$ is the dimension and $\o_\mf$ the K\"ahler form  of $\mf$ (the last equality holds because
$E$ is flat). 
Second, whenever $\cal F$ is a Higgs subsheaf of $E$ of degree equal to $0$, its
  saturation (see below) is the sheaf of 
sections of a holomorphic vector subbundle $F$ of $E$ stable by $\t$ and the orthogonal complement $F^\perp$
of $F$ w.r.t the harmonic metric is also a holomorphic subbundle of $E$ stable by $\t$, so that we
have a Higgs bundle orthogonal decomposition 
$$  
(E,\t)= (F,\t_{|F})\oplus (F^\perp,\t_{|F^\perp}).
$$

\begin{rema}
  In general, when dealing with notions of stability of vector bundles, one uses
  slopes rather than degrees. However, in our case $E$ is flat and there is no need to consider
  slopes.
\end{rema}

\subsection{Higgs bundles and foliations}\label{higgsfol}\hfill

\smallskip

Assume now that the compact K\"ahler manifold $\mf$, with its harmonic Higgs bundle $(E,\t)$, also
admits a smooth holomorphic foliation $\T$ by complex curves, and that this
foliation has an invariant transverse invariant (positive) measure $\mu$.  
Our goal in this section is to understand the behaviour of the
Higgs bundle $(E,\t)$ with respect to the foliation $\T$ and its transverse measure $\mu$.

\medskip

We begin by defining adapted notions of Higgs subsheaves and degree.

\smallskip

Let $L\subset T_\mf$ be the tangential line field of the foliation $\T$, i.e. the holomorphic line subbundle of vectors which are tangent to the leaves of
$\T$, and let $L^\vee$ be its dual. We restrict the Higgs field $\t$ to $L$, i.e. we see it as a holomorphic
section of $\End(E)\otimes L^\vee$. 

A subsheaf ${\cal F}\subset{\cal O_\mf}(E)$ is {\em invariant along the leaves} or is a {\em
  leafwise Higgs subsheaf} if the Higgs field $\t$ maps $\cal F\otimes L$ to $\cal F$.  

\smallskip

We make
  the observation that a Higgs subsheaf of $(E,\t)$ is a leafwise Higgs subsheaf, but that
  the converse does not hold, so that there is no reason why the degree (computed w.r.t. the K\"ahler
  form $\o_Y$) of a leafwise Higgs subsheaf should be nonpositive.

\smallskip

The invariant transverse measure $\mu$ defines a closed current $\intO$ of bidegree $(m-1,m-1)$ on $\mf$ which is
$\T$-invariant and positive since $\mu$ is (see~\cite{God}*{V.3.5}
and~\cite{Sullivan}). Let indeed $\a$ be a $2$-form on $\mf$. Take a covering $(U_i)_{i\in I}$ of
$\mf$ by regular open sets  
for the foliation $\T$, and a partition of unity $(\chi_i)_{i\in I}$ subordinated to it. Let $T_i$
be the space of plaques of $U_i$ and call again $\mu$ the measure on $T_i$ given by the transverse
measure. The forms $\chi_i\a$ are compactly supported in the open sets $U_i$
and by integrating $\chi_i\a$ on the plaques of $U_i$, we obtain a compactly supported function on
the space $T_i$  which we can then integrate against the measure $\mu$ to get
$$
\intO\a:=\sum_{i\in I}\int_{T_i}\left(\int_t \chi_i\, \a\right)d\mu(t).
$$ 

Invariant transverse measures to $\T$ can be atomic, for example 
when $\T$ admits a closed leaf $C$ in which case the current is given by integration on $C$. Or they
can be diffuse, for example when the foliation admits an invariant transverse volume form
(cf.~\cite{God}*{V.3.7(i)} for the definition) in which case  
there exists a smooth closed {\em basic} $(m-1,m-1)$-form $\Omega$ on $\mf$ such that
$\intO\a=\int_\mf\a\wedge\Omega$ for any smooth $2$-form $\a$ on $\mf$. A form $\Omega$ is basic
w.r.t the foliation $\T$ if $\iota_\xi\Omega=\iota_\xi d\Omega=0$ for all $\xi\in L$.

The {\it foliated degree} $\degO {\cal F}$ of an ${\cal O}_\mf$-coherent sheaf ${\cal
  F}$ on $\mf$ is defined by $$\degO {\cal F}=\intO c_1({\cal F})$$ where
$c_1({\cal F})$ is any smooth representative of the first Chern class of ${\cal F}$. 

\medskip

Our main technical tool will be a {\em weak polystability} property of the harmonic Higgs bundle
{\em along the leaves}, in the case when the current $\intO$ is sufficiently regular, namely when
the transverse measure comes from a  transverse volume form.
Before giving the statement,  
we need some definitions (unfortunately, because this seems to be the admitted terminology in the
literature, we have to use the word ``saturated'' with two different meanings, but no confusion
should arise). 

A subset $S\subset \mf$ is {\em
  $\T$-saturated} if it is a union of leaves of the foliation $\T$, i.e. for all $x\in S$, the
leaf $\L_x$ of the foliation $\T$ through $x$ is 
included in $S$. If $S$ is $\T$-saturated then so is $\mf\backslash S$.  

A coherent subsheaf $\cal F$ of the sheaf $\cal O_Y(E)$ is {\em saturated} if $\cal
  O_Y(E)/\cal F$ is torsion free. A saturated subsheaf of $\cal O_Y(E)$ is reflexive and therefore
  normal. If $\cal F$ is a
  coherent subsheaf of $\cal O_Y(E)$, its {\em saturation} is the kernel of $\cal O_Y(E)\pfd (\cal O_Y(E)/\cal F)/{\rm
    Tor}(\cal O_Y(E)/\cal F)$. It is a saturated subsheaf of $\cal O_Y(E)$.  

In this paper, the {\em singular locus}  $\S(\cal F)$  of a
  coherent subsheaf $\cal F$ of $\cal O_Y(E)$ is the analytic subset of $Y$ where 
the quotient $\cal O_Y(E)/\cal F$ is not locally free. This is not the usual definition. The
  complement $Y\backslash\S(\cal  F)$ of $\S(\F)$ is 
the biggest subset of $Y$ where $\cal F$ is the sheaf of sections of a subbundle $F$ of $E$. If
$\cal F$ is saturated then $\S(\cal F)$ has codimension at least 2 in $Y$.

\smallskip

\begin{prop}~\label{dec} 
 Let $\mf$ be a compact K\"ahler manifold and $(E,\t)$ be a harmonic Higgs
    bundle on $\mf$. Let $\T$ be a smooth holomorphic foliation of $\mf$ by complex curves.
    Assume that $\T$ admits an invariant transverse measure $\mu$ given by an invariant
      transverse volume form. 
\begin{enumerate}
\item\label{semistab} {\em (Semistability along the leaves)} For any leafwise Higgs subsheaf ${\cal
    F}\subset{\cal O}_{\mf}(E)$ of $(E,\t)$,  $\degO{\cal F}\leq 0$.
\item\label{polystab} {\em (Weak polystability along the leaves)} If ${\cal F}\subset{\cal O}_{\mf}(E)$ is a
  saturated leafwise Higgs subsheaf of $(E, \t)$ such that $\degO{\cal F}= 0$, then
\begin{enumerate} 
\item\label{sat} the singular locus $\sing$ of $\F$ is $\T$-saturated;  
\item\label{split} on $Y\backslash\sing$, if $F$ is the subbundle of $E$ such that $\F=\cal O_Y(F)$
  and $F^\perp$ is its orthogonal complement w.r.t. the harmonic metric on $E$, we have
$\t(F^\perp\otimes L)\subset F^\perp$ and the
$C^\infty$-decomposition $E=F\oplus F^\perp$ is holomorphic along the leaves of the foliation
$\T$, i.e. for any leaf $\cal L$ of $\T$ such that $\cal L\subset \mf\backslash \sing$, $E_{|\cal
  L}=F_{|\cal L}\oplus F^\perp_{|\cal L}$ is a holomorphic orthogonal direct sum on $\cal L$.
\end{enumerate}
\end{enumerate}
\end{prop}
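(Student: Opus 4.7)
My plan is to adapt Simpson's classical Chern--Weil proof of polystability of harmonic Higgs bundles to the foliated setting. The essential observation is that, because $\Omega$ is basic, for any $(1,1)$-form $\a$ on $\mf$ the wedge product $\a\wedge\Omega$ depends only on the value $\a(\xi,\bar\xi)$ for $\xi\in L$. Thus the foliated degree of a subsheaf captures only the leafwise part of its Chern--Weil representative, and the argument reduces to controlling pointwise contributions in the $L$-direction.

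For (\ref{semistab}) I would first replace $\F$ by its saturation $\tilde\F$. This is still a leafwise Higgs subsheaf, because the induced map $(\tilde\F/\F)\otimes L\to\cal O_\mf(E)/\tilde\F$ goes from a torsion sheaf to a torsion-free sheaf and hence vanishes; and $\deg_{\cal T,\mu}\tilde\F\geq\deg_{\cal T,\mu}\F$ since $\tilde\F/\F$ is torsion and $\Omega$ is a positive current. On $\mf\setminus\sing$ the sheaf $\tilde\F$ is the sheaf of sections of a holomorphic subbundle $F\subset E$. With respect to the orthogonal decomposition $E=F\oplus F^\perp$ given by the harmonic metric, write $\t$ in block form $\t=\bigl(\begin{smallmatrix}\t_F & \g\\ \b & \t_{F^\perp}\end{smallmatrix}\bigr)$, and let $A$ denote the $(1,0)$-second fundamental form of $F$ in $(E,\nab)$, a $(1,0)$-form with values in $\Hom(F,F^\perp)$. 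A short trace computation using the harmonic metric equation $R^\nab=-[\t,\t^*]$ together with the Gauss formula for the Chern connection of $F$ gives the pointwise identity
$$
c_1(F)(\xi,\bar\xi)\;=\;\frac{1}{2\pi}\bigl(|\b(\xi)|^2-|\g(\xi)|^2-|A(\xi)|^2\bigr)
$$
for every $(1,0)$-vector $\xi$. Wedging with $\Omega$ selects this expression for $\xi\in L$, and the leafwise Higgs hypothesis $\b|_L=0$ gives $c_1(F)\wedge\Omega\leq 0$ pointwise, hence $\deg_{\cal T,\mu}\F\leq 0$. Integrability across the codimension $\geq 2$ locus $\sing$ is handled exactly as in Simpson's original argument.

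For (\ref{polystab}), equality forces the pointwise integrand to vanish on $\mf\setminus\sing$, so $A|_L\equiv 0$ and $\g|_L\equiv 0$; combined with $\b|_L=0$ this says that $\nab|_L$, $\t|_L$ and (by adjointness) $\t^*|_L$ all preserve the orthogonal splitting $E=F\oplus F^\perp$. Statement (\ref{split}) then follows: on any leaf $\cal L\not\subset\sing$ the antiholomorphic tangent space is spanned by $\overline L|_{\cal L}$, so the Dolbeault operator $\bar\partial_E=\nab^{0,1}$ restricted to $\cal L$ preserves both $F|_{\cal L}$ and $F^\perp|_{\cal L}$, and the decomposition is a holomorphic orthogonal direct sum on $\cal L$.

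The $\cal T$-saturation of $\sing$, statement (\ref{sat}), is the most delicate point and where I expect the main obstacle. By dimension, any leaf $\cal L$ either lies in $\sing$ or meets it in a discrete set; in the second case, if $p\in\cal L\cap\sing$ is isolated, then on $\cal L\setminus\{p\}$ the subbundle $F|_{\cal L}$ is $\nab|_L$-parallel, and parallel transport produces a smooth (hence holomorphic) extension $\tilde F$ through $p$ in $E|_{\cal L}$. The remaining step is to promote this purely leafwise extension to local freeness of $\F$ as a subsheaf of $\cal O_\mf(E)$ in an ambient neighborhood of $p$, thereby contradicting $p\in\sing$. I would approach this via a Hartogs-type argument in a foliated chart around $p$, combining the leafwise holomorphic extension with the fact that on a transverse slice $\F$ is, generically, already a subbundle by saturatedness, so that the extension in the leaf direction can be propagated to the full neighborhood.
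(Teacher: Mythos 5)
Your treatment of (\ref{semistab}) and (\ref{split}) is essentially the paper's own argument: saturate $\F$, decompose the flat connection in block form with respect to $E=F\oplus F^\perp$ off $\sing$, and note that wedging the Chern--Weil integrand against the basic form $\Omega$ kills precisely the block of $\t$ that the leafwise Higgs hypothesis makes vanish along $L$, leaving the nonpositive contribution of $\g$ and $A$; equality then forces $\g|_L=A|_L=0$ and the leafwise holomorphic orthogonal splitting. Two caveats: your displayed pointwise identity omits the cross terms between $\b$ and $A$ (harmless here only because they too die against $\Omega$ once $\b|_L=0$), and the assertion that $\int_{\mf\backslash\sing}\tr\Theta_F\wedge\Omega$ computes $\degO\F$ is not quite ``exactly as in Simpson'': the paper has to invoke the $L^2_1$-regularity of the orthogonal projection onto $F$ (Uhlenbeck--Yau) to justify the Stokes argument across the codimension-two set $\sing$.

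The genuine gap is in (\ref{sat}), exactly where you expected it. Parallel transport along a single leaf $\cal L$ extends $F|_{\cal L}$ through an isolated point $p$ of $\cal L\cap\sing$ only as a subbundle of $E|_{\cal L}$; it gives no control in the transverse directions, and your ``Hartogs-type argument \dots\ propagated to the full neighborhood'' is precisely the step that needs a proof --- saturatedness of $\F$ on a generic transverse slice does not let you glue leafwise extensions into an ambient one. The paper instead extends not the bundle but the orthogonal projection operator onto $F^\perp$: off $\sing$ this is a matrix of functions that are $C^\infty$ and holomorphic in the leaf direction, and Lemma~\ref{FunctionsExtend} (a Cauchy-integral version of the second Riemann extension theorem, integrating over a fixed circle in the leaf variable with the transverse variables as smooth parameters) extends it to a $C^\infty$, leafwise holomorphic map on a full ambient neighborhood of any point of $\sing$ isolated in its leaf. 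Lower semicontinuity of the rank applied to both the projection and its complement pins down the rank of the extension, and normality of the saturated sheaf $\F$ --- which you never invoke --- identifies $\F$ with the sheaf of sections of the extended bundle there, contradicting $p\in\sing$. One also needs the preliminary fact that the set of points whose entire leaf lies in $\sing$ is an analytic subset of codimension at least two (checked in the paper via the fibre-dimension theorem); without it the statement ``$\sing$ is $\T$-saturated'' does not follow from the local extension alone.
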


\begin{proof} 
We first prove the semistability along the leaves, using the Chern-Weil formula. Let $\Omega$ be the
closed basic $(m-1,m-1)$-form on $\mf$ given by the invariant transverse 
    volume form to the foliation $\T$, so that for all $2$-form $\a$ on $\mf$,
    $\intO\a=\int_\mf\a\wedge\Omega$.   

Let $\F$ be a leafwise Higgs subsheaf of $(E,\t)$. 
The foliated degree
of the saturation $\overline \F$ of $\F$ is greater than or equal
to the foliated degree of $\F$. This is because there exists an effective divisor $D$ such that
  $\det\overline\F=(\det\F)\otimes [D]$ (see e.g.~\cite{K}*{Chap.~V (8.5) p. 180}),  so that
  $\degO\overline\F=\intO c_1(\ov\F)=\int_\mf c_1(\det \ov
  F)\wedge\Omega=\degO\F+\int_D\Omega\geq\degO\F$. Therefore it is enough to prove~(\ref{semistab})
  for saturated leafwise Higgs subsheaves and we assume for now on that $\F$ is
  saturated, so that the codimension of $\sing$ is at least 2. 

There exists a
holomorphic subbundle 
 $F$ of $E$ defined outside of the singular locus $\sing$ of $\F$, such that $\cal F$ is the sheaf
 of sections of $F$  on $\mf\backslash\sing$. On $\mf\backslash \sing$, we can decompose the
flat connection $D=\nabla+\t+\t^\star$ with  
respect to the orthogonal decomposition $E={F}\oplus{F}^\perp$ (for the harmonic metric). Denoting by $\sigma\in
C^\infty_{1,0}(\mf\backslash\sing,{\rm Hom}({F},{F}^\perp))$ the second fundamental form of ${F}$, we get: 
$$ 
 D=
\left(
\begin{matrix}
\nabla_{F} & -\sigma^\star\\
\sigma & \nabla_{{F}^\perp}\\
  \end{matrix}
\right)+
\left(
\begin{matrix} 
\theta_1 & \theta_2 \\
  \theta_3 & \theta_4\\
 \end{matrix}
\right) 
+
\left(
\begin{matrix} 
\theta_1^\star & \theta_3^\star \\
  \theta_2^\star & \theta_4^\star\\
 \end{matrix}
\right)
 .
$$

On the one hand, the curvature ${\Theta}_{F}$ of the connection
$\nabla_{F}+\theta_1+\theta_1^\star$ can be used to compute a 
representative of the first Chern class of ${\cal F}$ on $\mf\backslash \sing$, namely
$c_1({F})=\frac {\sqrt{-1}}{2\pi}{\rm tr}\,{\Theta}_{F}$, 
and integrating $\frac{\rmo}{2\pi}\, {\rm tr}\,{\Theta}_{F}\wedge\Omega$ on $\mf\backslash
\sing$ gives the foliated degree of $\cal F$. This is sketched
in~\cite{S1}*{Lemma~3.2}, and can be proved as follows. By \cite{K}*{pp.~180-182} (see
also~\cite{Sibley}*{Theorem~2.23 \& Lemma~4.6}), if $\Xi_{F}$ is the curvature of the metric
connection $\nabla_F$ on $Y\backslash \sing$ then integrating $\frac {\sqrt{-1}}{2\pi}{\rm
  tr}\,{\Xi}_{F}$ against $\Omega$ on $Y\backslash \sing$ computes $\int_Y c_1({\cal
  F})\wedge\Omega$. Moreover, still on $Y\backslash \sing$, we have ${\rm tr}\,{\Theta}_{F}={\rm
  tr}\,{\Xi}_{F}+d \bigl({\rm tr}\,(\theta_1+\theta_1^\star)\bigr)$. It is known
(see~\cite{UhlenbeckYau} and also~\cite{Pop}) that the orthogonal projection $\varpi:E\rightarrow
F$,  
which can be seen as an element of $L^\infty(Y,\End(E))$, is also in the Sobolev space
$L_1^2(Y,\End(E))$. Therefore 
$\theta_1=\varpi\circ\theta\circ\varpi$ and 
$\theta_1^\star=\varpi\circ\theta^\star\circ\varpi$ are such that $\int_{Y\backslash \sing} d\bigl({\rm
  tr}\,(\theta_1+\theta_1^\star)\bigr)\wedge\Omega=\int_{Y} d\bigl({\rm
  tr}\,(\theta_1+\theta_1^\star)\bigr)\wedge\Omega=0$ by Stokes formula and density of smooth
functions in $L^2(Y)$. To sum up, we have
$$
\degO\F = \int_Y c_1(\F)\wedge\Omega=\frac{\sqrt{-1}}{2\pi}\int_{Y\backslash\sing}\tr
\Xi_F\wedge\Omega = \frac{\sqrt{-1}}{2\pi}\int_{Y\backslash\sing}\tr
\Theta_F\wedge\Omega.
$$

On the other hand, since $ D^2=0$, we have
$(\nabla_{F}+\theta_1+\theta_1^\star)^2=-(\theta_2+\theta_3^\star- 
\sigma^\star)\wedge(\theta_3+\theta_2^\star+\sigma)$. 
Therefore 
$$
\begin{array}{rcl}
\ds {\Theta}_{F}\wedge\Omega & = & 
\ds (-\theta _2\wedge\theta_2^\star
+\sigma^\star\wedge\sigma -\theta_3^\star\wedge\theta_3
-\t_3^\star\wedge\sigma+\sigma^\star\wedge\t_3)\wedge\Omega \\
& = & \ds(-\theta _2\wedge\theta_2^\star
+\sigma^\star\wedge\sigma)\wedge\Omega
\end{array}
$$
because $\t_3\wedge\Omega=\t_3^\star\wedge\Omega=0$, for $\Omega$ is a basic $(m-1,m-1)$-form and
$\t_3$ vanishes in the direction of the leaves of $\T$ since $\F$ is a leafwise
Higgs subsheaf. Hence
$$
\degO\F =\frac{\sqrt{-1}}{2\pi}\int_{Y\backslash\sing}{\rm tr}\,(-\theta _2\wedge\theta_2^\star
+\sigma^\star\wedge\sigma)\wedge\Omega\, \leq\, 0
$$
as wanted.

\medskip

Now let us prove (\ref{polystab}). We will follow the proof that Einstein-Hermitian vector bundles are
polystable, see~\cite{K}. Assume that $\degO(\cal
F)=0$ for the subsheaf $\cal F$ of the proof of Assertion (\ref{semistab}). Then ${\rm tr}\,(-\theta
_2\wedge\theta_2^\star  
+\sigma^\star\wedge\sigma)\wedge\Omega=0$ on $\mf\backslash\sing$ and this implies that for all
$\eta\in {L}_{|\mf\backslash\sing}$, $\t_2(\eta)=0$ and $\sigma(\eta)=0$. This means on the
first hand that
$\t(F^\perp\otimes L)\subset F^\perp$  on $Y\backslash \sing$ and on the second hand that if $\cal L$ is a leaf of $\T$, then on $\cal
L\cap(\mf\backslash\sing)$, $F$ is a parallel subbundle of $E$. As in the proof
of~\cite{K}*{Theorem~5.8.3}, we deduce that the $C^\infty$-decomposition $E=F\oplus F^\perp$ is
holomorphic when restricted to $\cal L\cap(\mf\backslash\sing)$. We will say that the
decomposition, where it is defined, is holomorphic along the leaves of $\T$. This will 
prove~(\ref{split}) once (\ref{sat}) will be established. 

\smallskip

Let $\singsat=\{x\in\sing\mbox{ such that }\cal L_x\subset\sing\}$. This subset of
  $\sing$ is  
$\T$-saturated, and it is an analytic subset of
codimension at least 2 in $\mf$. Indeed, on a regular open set $U$  for the foliation
$\T$ identified with an open subset of $\C^{m}$, we may assume that the leaves of $\T$ are the
fibers of a linear projection 
$p:\C^{m}\pfd\C^{m-1}$. Then $\singsat=\{x\in\sing\mbox{ such that }\dim p^{-1}(p(x))\geq 1\}$
and hence is analytic by~\cite{Fischer}*{p.~137}.  

We will prove that the holomorphic subbundle $F$ which is defined
outside $\sing$ can be extended to a holomorphic subbundle defined on $\mf\backslash\singsat$, and
that the decomposition $E=F\oplus F^\perp$, which is $C^\infty$ and holomorphic along the
leaves outside $\sing$, can also be extended to a decomposition on $\mf\backslash\singsat$, with
the same regularity. This will be a consequence of the following variation on the second Riemann extension
theorem:      
 
\begin{lemma}\label{FunctionsExtend}
Let $\cal O$ be an open subset of $\C^m$ and $V$ be a 1-dimensional linear subspace in $\C^m$. For $z\in\C^m$, let $\ell_z$ be the affine
line $z+V$. Let $A$ be an 
analytic subset of $\cal O$, of codimension at least 2. Let 
  $\varphi:\cal O\backslash A\pfd\C$ be a $C^\infty$ map. Assume that $\varphi$ is holomorphic in the
  $V$-direction, meaning that for every $z\in\cal O\backslash A$, the restriction of $\varphi$ to a
  neighborhood of 
  $z$ in $\ell_z$ is holomorphic. Let $a$ be a point of $A$ which is an isolated point of
  $A\cap\ell_a$. Then there exist a neighborhood $\cal U$ of $a$ in $\cal O$ and a $C^\infty$ map
  $\Phi:\cal U\pfd \C$, holomorphic in the $V$-direction, such that $\Phi=\varphi$ on $\cal
  U\backslash A$.          
\end{lemma}

(We postpone the proof of the lemma to the end of the present proof.)

Let $x$ be a point of $\sing\backslash\singsat$, i.e. $x$ is an isolated
point of $\sing\cap\cal L_x$. We want to show that $F$ and $F^\perp$ can be extended in a neighborhood of $x$ in
$\mf$. Since this is a local problem, we may assume that we are on an open subset $\cal O$
  of $\C^{m}$, that the leaves of the tautological
foliation $\T$ are the affine lines of a given direction $V\subset\C^{m}$ as in the lemma, and that
$E$ is a trivial bundle. Because of the regularity properties of $F$ and
$F^\perp$, the section $\phi$ of $\Hom(E,E)$ defined over $\cal O\backslash\sing$ and
corresponding to the  
orthogonal projection on $F^\perp$ is given by a matrix of functions $(\phi_{ij})$ from
$\cal O\backslash\sing$ to $\C$ which are $C^\infty$ and holomorphic in the $V$-direction. By
the above lemma, $\phi$ extends to a section of $\Hom(E,E)$ defined in a neighborhood of
$x$ in $\cal O$. By the lower semi-continuity of the rank, if $x\in\sing\backslash\singsat$,
$\rk\, \phi(x)\leq \rk\, E-\rk\, F$. In the same way, $\id-\phi$ can be extended to
$\sing\backslash\singsat$ and hence $\rk\,\phi(x)= \rk\,E-\rk\, F$ on
$\sing\backslash\singsat$. Hence the subbundles $F$ and 
$F^\perp$ can be 
extended to $\mf\backslash\singsat$, as $C^\infty$-vector bundles holomorphic along the leaves of
$\cal T$. Since $F$ is holomorphic and orthogonal to $F^\perp$ on $\mf\backslash\sing$, this is
also true on $\mf\backslash\singsat$. Finally, because $\F$ is normal, $\F$ coincides with
  the sheaf of sections of $F$ on $Y\backslash\singsat$.

By the definition of $\sing$, this implies that
    $(Y\backslash\singsat)\cap\sing=\emptyset$, so that $\sing=\singsat$ and hence $\sing$ is
  $\T$-saturated.
\end{proof}

\begin{proof}[Proof of Lemma~\ref{FunctionsExtend}]  

Choose coordinates $(z_1,\ldots,z_m)$ on $\C^m$ such that $a=0$ and
$\ell_0=V=\{z\,|\,z_1=\cdots=z_{m-1}=0\}$. By 
assumption $0$ is an isolated point of $\ell_0\cap A$, hence there exists $r>0$ such that the circle 
$\{z\,|\,z_1=\cdots=z_{m-1}=0,\,|z_m|=r\}$ does not meet $A$. Let $\varepsilon >0$ be
such that the polydisc $\Delta(0,\e)^{m-1}\times\Delta(0,r+\varepsilon)\subset\cal O$ and $\{z \mid |z_i|<\varepsilon,\,\forall 1\leq i\leq m-1,\,|z_m|\in(r-\e,r+\e) \}\cap A=\emptyset$. 
Then the function $\Phi$ defined by
$$
\Phi(z_1,\ldots,z_m)=\frac1{2\pi\rmo}\int_{|t|=r}\frac{\varphi (z_1,\dots,z_{m-1},t)}{t-z_m}\,dt
$$
is $C^\infty$ on the polydisc $\cal U=\Delta(0,\e)^{m-1}\times\Delta(0,r)\subset\cal O$. Moreover,
for all $(z_1\ldots,z_{m-1})\in\Delta(0,\e)^{m-1}$, the map $z_m\mapsto 
\Phi(z_1,\dots,z_{m-1},z_m)$ is holomorphic on the disc $\Delta(0,r)$. 

Let $\cal U'=\{z\in \cal U\mid \ell_z\cap A\cap \cal U=\emptyset\}$. Because $\varphi$ is holomorphic in
the $V$-direction, for all
$z$ in $\cal U '$, the restriction 
of $\Phi$ to $\ell_z\cap \cal U$ equals $\varphi$ by the Cauchy formula. Now $\cal U '$ is dense in
$\cal U$. Indeed, let  $p$ be the projection 
$\C^m\pfd\C^m/V$. Since  $0\in A$ is an isolated point of $A\cap p^{-1}(p(0))$, near $0=p(0)$ the
set $p(A)$ is analytic of the same dimension as $A$ (\cite{Fischer}*{p.~133}), thus it has
codimension at least 1. Hence $\Phi=\varphi$ on $\cal U\backslash A$.  
\end{proof}

\begin{rema}\label{closedleaves1}
{\em Closed leaves I: semistability and saturation of sheaves.}  If instead of an invariant transverse
  measure given by an invariant transverse volume form, one considers the measure $\delta_C$ given by a 
closed leaf $C$ of the foliation $\T$ (assuming there is one), a statement like
Proposition~\ref{dec} will fail without further assumptions. In fact, it is well known that even the
notion of degree is in general not 
reasonable in this case. Suppose for example that $Y$ is a compact K\"ahler
surface and that the foliation $\T$ admits a closed leaf of negative self
intersection. Then $\deg_{\T\hspace{-1pt},\,\delta_C}{\cal O}_\mf(-C):=\int_C{\cal O}_\mf(-C)=-C^2>0$, and the
``degree'' of  ${\cal O}_\mf(-C)$ is bigger than the ``degree'' of its saturation ${\cal O}_\mf$,
which of course vanishes.  In
order to avoid this kind of inconvenience, it is necessary that $C$ enjoys some positivity
properties, e.g. the cohomology class of the current $\int_{\T\hspace{-1pt},\,\delta_C}$ is represented by a smooth
semi-positive $(1,1)$-form in the sense of currents. One can then hope to get a leafwise
semistability result.
\end{rema}

\section{The tautological foliation on the projectivized tangent bundle of
  complex hyperbolic manifolds}\label{tautofol}

In this section we give a detailed description of the tautological foliation $\T$ by complex curves
  on the projectivized tangent bundle $\PTX$ of a complex hyperbolic manifold $X$ and of its transverse
  structure. Together with the results of \textsection\ref{higgsfolkaehler}, it will be one of the main
  tools to (re)prove the Milnor-Wood inequality on the Toledo invariant and to study maximal
    representations. The section ends with some
  applications of Ratner's theorem on the closure of orbits under groups generated by unipotent
  elements to projection to $X$ of subsets of $\PTX$ saturated under $\T$. 

\medskip

The Klein model of complex hyperbolic $n$-space $\H$ is the set of negative lines in
$\C^{n+1}$ for a Hermitian form 
$h$ of signature $(n,1)$. It is an open set in the projective space $\C{\mathbb P}^n$. 

The Lie group $\SU(n,1)=\SU(\C^{n+1},h)$ is the subgroup of $\SL(n+1,\C)$ consisting of elements
preserving  the 
Hermitian form $h$. As a group of matrices, in a basis $(e_1,\ldots,e_n,e_{n+1})$ of $\C^{n+1}$
where the matrix of $h$ is the diagonal matrix $I_{n,1}={\rm diag}(1,\ldots,1,-1)$, 
$$
\SU(n,1)=\{M\in\SL(n+1,\C)\,|\,M^\star I_{n,1}M=I_{n,1}\},
$$
where $M^\star$ denotes the conjugate transpose of $M$. 

The group $\SU(n,1)$ acts transitively on $\H$. The stabilizer of a point is a maximal compact
subgroup of $\SU(n,1)$ and is conjugated to $\U(n)\simeq{\rm S}(\U(n)\times\U(1))$. This gives a
realization of $\H$  as the Hermitian symmetric
space $\SU(n,1)/\U(n)$. As a bounded symmetric domain,  
complex hyperbolic $n$-space is
biholomorphic to the unit ball in $\C^n$.

We equip the Lie algebra ${\mathfrak{su}}(n,1)$ of $\SU(n,1)$ with the Killing form $b(A,B)=
2\,\tr(AB)$, normalized so that the holomorphic sectional curvature of the $\SU(n,1)$-invariant
K\"ahler metric 
$\o$ it induces on $\H$ is $-1$. 

\smallskip

An $n$-dimensional complex hyperbolic manifold $X$ is the quotient of $\H$ by a discrete torsion free
subgroup $\G$ of $\SU(n,1)$.

\subsection{Complex geodesics and the tautological foliation}\label{cplxgeod}\hfill

\smallskip

The {\em complex geodesics} of
$\H\subset\C{\mathbb P}^n$ are the intersections of 
$\H$ with the complex lines $\C{\mathbb P}^1\subset \C{\mathbb P}^n$. It follows that the space $\cal G$ of complex geodesics is an open homogeneous set in the Grassmannian of
2-planes in $\C^{n+1}$. More precisely, $\SU(n,1)$ acts transitively on $\cal G$ and $\cal
G=\SU(n,1)/{\rm S}(\U(n-1)\times\U(1,1))$.  Complex geodesics are complex totally
geodesic subspaces of $\H$ isometric (up to a
constant) to the Poincar\'e disc, of induced sectional curvature $-1$. Given a point in $\H$ and
a complex tangent line at this point, there is a unique complex geodesic through that point tangent
to the complex line.  

\medskip

Let $\TH\pfd \H$ be the holomorphic tangent bundle of $\H$ and consider the {\em projectivized
tangent bundle} $\pi:\PTH\pfd\H$ of $\H$. It is a holomorphic 
bundle and the fiber over a point $x\in \H$ is
the projective space of lines in the tangent space $T_{\H,x}$.  
A point in the projectivized tangent bundle $\PTH$ of 
$\H$ is given by two $h$-orthogonal complex lines in $\C^{n+1}$ spanning a complex geodesic. Hence
$\PTH$ is the homogeneous space $\SU(n,1)/{\rm S}(\U(n-1)\times\U(1)\times\U(1))$. 
The central fiber of the holomorphic
projection $\pi:\PTH\pfd\H$ is 
$\U(n)/(\U(n-1)\times \U(1))=\C{\mathbb P}^{n-1}$ as it should. 

\medskip

The map from $\PTH$ to
$\cal G$ associating to a point in the projectivized tangent 
bundle the complex geodesic it defines is the $\SU(n,1)$-equivariant holomorphic fibration  
$\pig:  \SU(n,1)/{\rm S}(\U(n-1)\times\U(1)\times\U(1))\pfd \SU(n,1)/{\rm
  S}(\U(n-1)\times\U(1,1))$.  
The central fiber $\U(1,1)/(\U(1)\times\U(1))$ is isometric to the Poincar\'e disc so that $\PTH$ is a
disc bundle over $\cal G$.
This of course defines a foliation on $\PTH$ whose leaves are the fibers of $\pig$.  

\medskip

If $\G$ is a discrete torsion free subgroup in $\SU(n,1)$ and $X=\G\backslash\H$ the corresponding
complex hyperbolic manifold, we again call $\pi:\PTX\pfd X$ the projectivized tangent bundle of
$X$. 
The fibration $\pig$, by $\SU(n,1)$-equivariance, defines a smooth holomorphic
foliation by holomorphic curves on $\PTX$.  
This foliation inherits a
structure of transversally homogeneous $\cal G$-foliation, see \cite{God}*{\textsection~III.3
  p.~164}, which will be discussed in \textsection \ref{sec:transverse}. 
  
\medskip

If $\xi\in\PTX$, the leaf $\cal L_\xi$  through $\xi$
is locally given by the holomorphic tangent space of the local complex geodesic tangent to $\xi$ at
$\pi(\xi)$ in $X$. Vectors tangent to the leaves of the foliation form a  line subbundle $L$ of
$T_{\PTX}$. Recall that we can pull-back the 
tangent bundle $T_X\pfd X$ to $\PTX$ to obtain a vector bundle 
$\pi^\star T_X\pfd\PTX$ and that the tautological line 
bundle $\OX$ is the subbundle of $\pi^\star  T_X$ defined by 
$$
\OX=\{(u,\xi)\in T_X\times\PTX\,\mid\, u\in\xi\}.
$$ 
By construction the
differential $\pi_\star$ of $\pi$ at $\xi$ maps the fiber $L_\xi$ of $L$ to the line $\xi\subset
T_{X,\pi(\xi)}$. This means that when considered as a morphism 
from $T_{\PTX}$ to $\pi^\star T_X$, $\pi_\star$ realizes an isomorphism between the line subbundle
$L$ of $T_{\PTX}$ and the tautological line subbundle $\OX$ of $\pi^\star T_X$. 

\smallskip

For these reasons the foliation will be called the {\em tautological foliation} of $\PTX$ and will be
denoted by $\T$.  

\subsection{The transverse structure of the tautological foliation}\label{sec:transverse}\hfill

\smallskip

By construction, the tautological foliation $\T$ has a structure of transversally homogeneous
  $\cal G$-foliation, also called a transverse $(\SU(n,1),\cal G)$-structure. In this section, we
  describe this structure and prove the

\begin{prop}\label{prop:transverse}
The homogeneous space $\cal G$ of complex geodesics of $\H$
  admits a $\SU(n,1)$-invariant non-degenerate but indefinite K\"ahler form $\o_{\cal G}$, which is
  unique up to normalization. This form defines a diffuse invariant transverse measure $\mu_{\cal
    G}$ for the tautological foliation $\T$ on the projectivized tangent bundle 
$\pi:\PTX\pfd X$ of a complex hyperbolic manifold $X$. 
The associated current of integration on $\PTX$ satisfies (when suitably normalized)
$$ 
\into\pi^\star\beta = \frac{1}{n!}\,\int_X\beta\wedge\o^{m-1} 
$$   
for any compactly supported 2-form $\beta$ on $X$. 
\end{prop}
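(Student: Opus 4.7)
The plan is to proceed in three steps.

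First, I would construct $\o_{\cal G}$ by realizing $\cal G=\SU(n,1)/K$, with $K={\rm S}(\U(n-1)\times\U(1,1))$, as an irreducible pseudo-Hermitian symmetric space. The relevant involution is conjugation by $J={\rm diag}(1,\ldots,1,-1,-1)\in\SU(n,1)$, whose centralizer is exactly $K$, so that $\su(n,1)=\kg_{\cal G}\oplus\mg_{\cal G}$ in the usual way. The complex tangent space $T^{1,0}_{o_{\cal G}}\cal G\cong\mg_{\cal G}^{1,0}$ can be identified with $\Hom(\C\la e_n,e_{n+1}\ra,\C\la e_1,\ldots,e_{n-1}\ra)$, the external tensor product of the standard representations of $\U(1,1)$ and $\U(n-1)$, and is $\C$-irreducible as a $K$-module. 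Schur's lemma then gives existence (via, for instance, the Killing form) and uniqueness up to a real scalar of a $K$-invariant Hermitian form, hence of the associated 2-form $\o_{\cal G}$; the non-compactness of $\U(1,1)$ forces the signature to be indefinite, and closedness of $\o_{\cal G}$ follows from the standard symmetric-space argument.

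Second, I would build $\mu_{\cal G}$ from the top exterior power $\Omega_{\cal G}:=\o_{\cal G}^{2(n-1)}/(2(n-1))!$, a nonzero $\SU(n,1)$-invariant volume form on $\cal G$. Its pullback $\pi_{\cal G}^\star\Omega_{\cal G}$ to $\PTH$ is closed of bidegree $(2n-2,2n-2)$ and basic for $\T$, since $\ker(\pi_{\cal G})_\star=L$; $\SU(n,1)$-invariance lets it descend to a closed basic form on $\PTX$. Using the transversally homogeneous $\cal G$-structure of $\T$, local transversals are identified with open subsets of $\cal G$, and $\Omega_{\cal G}$ provides on them the desired invariant transverse measure $\mu_{\cal G}$. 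Because $\Omega_{\cal G}$ is a smooth volume form, $\mu_{\cal G}$ is diffuse in the sense of \textsection\ref{higgsfol}, and the associated current of integration is, up to the normalization fixed in the last step, $\alpha\mapsto\int_{\PTX}\alpha\wedge\pi_{\cal G}^\star\Omega_{\cal G}$.

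Third, for the integration formula, fiber integration along $\pi:\PTX\pfd X$ yields
\[
\into\pi^\star\beta\;=\;\int_{\PTX}\pi^\star\beta\wedge\pi_{\cal G}^\star\Omega_{\cal G}\;=\;\int_X\beta\wedge\pi_\star(\pi_{\cal G}^\star\Omega_{\cal G})
\]
for any compactly supported 2-form $\beta$ on $X$. The pushforward $\pi_\star(\pi_{\cal G}^\star\Omega_{\cal G})$ is a smooth $(n-1,n-1)$-form on $X$ that lifts to $\H$ as an $\SU(n,1)$-invariant $(n-1,n-1)$-form. Since $\U(n)$ acts irreducibly on $T^{1,0}_{x_0}\H$, Schur's lemma implies that the space of such invariant forms is one-dimensional, spanned by $\o^{n-1}$, so $\pi_\star(\pi_{\cal G}^\star\Omega_{\cal G})=c\,\o^{n-1}$ for some $c>0$; rescaling $\o_{\cal G}$ so that $c=1/n!$ gives the stated identity. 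The only, modest, obstacle is pinning down this constant: one may evaluate at a single point $x_0\in\H$, observing that the $\CP^{n-1}$-fiber of $\pi$ over $x_0$ embeds via $\pi_{\cal G}$ into $\cal G$ as the $\U(n)$-orbit parametrizing complex geodesics through $x_0$, on which Schur's lemma again identifies the restriction of $\o_{\cal G}$ with a multiple of the Fubini--Study form. The conceptual driver throughout is the irreducibility of the relevant isotropy representations.
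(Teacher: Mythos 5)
Your proposal is correct, and for the key integration formula it takes a genuinely different route from the paper. The paper proceeds by explicit computation: Lemma~\ref{transversesymplectic} introduces three invariant $2$-forms $\o_1,\o_2,\o_3$ on $\PTH$ adapted to the double fibration, identifies $\o=\o_1+\o_2$, $\o_{\cal G}=\o_2-\o_3$ and the curvature of ${\mathcal O}_{\PTH}(1)$ as $\o_1+\tfrac12(\o_2+\o_3)$, and Lemma~\ref{compute} then evaluates the fiber integral $\int_{\pi^{-1}(x)}\la\pi^\star\beta,\o_1\ra\,\o_3^{n-1}$ by hand. You instead invoke Schur's lemma twice: once on the irreducible isotropy module $\C^{n-1}\otimes\C^2$ of ${\rm S}(\U(n-1)\times\U(1,1))$ to get existence and uniqueness of $\o_{\cal G}$ (the paper essentially cites Wolf for this), and once on the $\U(n)$-module of $(n-1,n-1)$-covectors at a point of $\H$, which is $\End(\C^n)$ and has a one-dimensional space of invariants, to get $\pi_\star(\pig^\star\Omega_{\cal G})=c\,\o^{n-1}$ after the projection formula. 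This is cleaner and suffices for the proposition as stated, since ``suitably normalized'' only requires $c\neq 0$. Two small points deserve a word. First, $c\neq 0$ is not purely formal; the quickest argument is that the current of the positive transverse measure is positive on $\pi^\star\o$, whose restriction to each leaf is an area form, so $c\int_X\o^n=\into\pi^\star\o>0$. Second, rescaling $\o_{\cal G}$ by $\lambda$ multiplies $c$ by $\lambda^{2n-2}>0$, so a wrong sign cannot be repaired that way; it must be absorbed in the choice of the positive measure attached to the indefinite volume form, which is exactly the role of the factor $(-1)^{n-1}$ in the paper's $\Omegag$. Finally, what the paper's computational route buys is the explicit normalization constants, which are needed later (e.g. $c_1(L^\vee)=\tfrac1{2\pi}(\pi^\star\o-\tfrac12\,\pig^\star\o_{\cal G})$ and $\dego L^\vee=\tfrac1{2\pi}\,\vol(X)$ in \textsection~\ref{sec:ineq}); your Schur-based shortcut proves the proposition but would still have to be supplemented by that computation for the sequel.
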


This result will allow us to compute numerical invariants on the complex hyperbolic manifold
  $X$ by going up to the projectivized tangent bundle $\PTX$ and integrating along the leaves of $\T$. For
  instance, if $X$ is compact and $\F$ is a coherent sheaf on
  $X$, then, using the definition of the foliated degree given in \textsection \ref{higgsfol}: 
$$  
\deg \F=\frac{1}{n!}\int_Xc_1(\F)\wedge \o^{n-1}=\into\pi^\star c_1(\F) =\degog (\pi^\star\F)
$$
In the same way, still assuming that $X=\G\backslash\H$ is compact, if $\rho$ is a representation of
$\G$ in a Hermitian Lie group $G$, then the Toledo invariant of $\r$ is given by 
$$
\tau(\rho):=\frac{1}{n!}\int_X f^\star \o_\Y\wedge \o^{n-1}=\into\pi^\star f^\star\o_\Y
$$
where $f$ is any $\r$-equivariant map from $\H$ to the Hermitian symmetric space $\Y$
associated to $G$, and $\o_\Y$ is the K\"ahler form of $\Y$ normalized so that the minimal value of its
holomorphic sectional curvature is $-1$.

\medskip

The existence of the indefinite Kähler form $\o_{\cal G}$ on $\cal G$ stated in
  Proposition~\ref{prop:transverse} is not new, see~\cite{Wolf}*{Theorem 6.3 and Corollary
    6.4}. However we need the correct normalization constants between $\o_{\cal G}$, the invariant
  K\"ahler form $\o$ on $X$, and the curvature of the tautological line bundle on $\PTX$. To work
  these constants out we now describe the geometry of the double holomorphic fibration 
between the complex $\SU(n,1)$-homogeneous 
spaces $\PTH$, $\cal G$ and $\H$ and the (pseudo-)K\"ahler structure of these spaces on the Lie
algebra level. The results are summarized in Lemma~\ref{transversesymplectic} below. The
end of the proof of Proposition~\ref{prop:transverse} is given in Lemma~\ref{compute}.

\medskip

To lighten the notation, in this section we set 
$\stab={\rm S}(\U(n-1)\times\U(1)\times\U(1))$, 
$H={\rm S}(\U(n-1)\times\U(1,1))$, 
and we denote their respective Lie algebras by 
$\mg$ and $\hg$. 

\smallskip

The Lie algebra $\su(n,1)$ of the group $\SU(n,1)$ is:
$$
\su(n,1)=\left\{ 
\left(
\begin{array}{cc}
A & \xi  \\
\xi^\star & a  
\end{array}
\right) ,\, A\in M_n(\C),\,\xi\in\C^n,\, a\in\C\,\mid\, A^\star=-A,\, a+\tr A=0 
\right\}.
$$
The Lie algebra of the maximal compact subgroup of $\SU(n,1)$, isomorphic to  $\U(n)$, is the subalgebra 
$$
\left\{ 
\left(
\begin{array}{cc}
A & 0  \\
0 & a  
\end{array}
\right) ,\, A\in M_n(\C),\, a\in\C\,\mid\, A^\star=-A,\, a+\tr A=0 
\right\}\simeq \u(n),
$$
whereas 
$$\mg=\left\{ 
\left(
\begin{array}{ccc}
A & 0 & 0 \\
0 & a & 0 \\
0 & 0 & b
\end{array}
\right),\, A\in\u(n-1),\, a,b\in\C, a+b+\tr A=0 
\right\}.
$$
$$ \hg=\left\{ 
\left(
\begin{array}{cc}
A & 0  \\
0 & B
\end{array}
\right),\, A\in\u(n-1),\, B\in\u(1,1), \,\tr A+\tr B=0 
\right\}.
$$
The real tangent space of $\PTH$ at $\stab$ is naturally identified with the subspace 
$$
\s=\left\{\xi=
\left(
\begin{array}{ccc}
0 & \xi_3 & \xi_2 \\
-\xi_3^\star & 0 & \xi_1 \\
\xi_2^\star & \xi_1^\star & 0
\end{array}
\right),\, \xi_1\in\C,\, \xi_2,\xi_3\in\C^{n-1} 
\right\}\subset\su(n,1)
$$
and its invariant complex structure $J$ is given at $\stab$ by
$$
J\left(
\begin{array}{ccc}
0 & \xi_3 & \xi_2 \\
-\xi_3^\star & 0 & \xi_1 \\
\xi_2^\star & \xi_1^\star & 0
\end{array}
\right)
=\left(
\begin{array}{ccc}
0 & \rmo\xi_3 & \rmo\xi_2 \\
\rmo\xi_3^\star & 0 & \rmo\xi_1 \\
-\rmo\xi_2^\star & -\rmo\xi_1^\star & 0
\end{array}
\right).
$$ 
Define the subspaces   
$$ \s_1=
\left\{
\left(
\begin{array}{ccc}
0 & 0 & 0 \\
0 & 0 & \xi_1 \\
0 & \xi_1^\star & 0
\end{array}
\right),\, \xi_1\in\C
\right\},
$$
$$
\s_2=\left\{
\left(
\begin{array}{ccc}
0 & 0 & \xi_2 \\
0 & 0 & 0 \\
\xi_2^\star & 0 & 0
\end{array}
\right),\,  \xi_2\in\C^{n-1} 
\right\},
$$
$$
\s_3=\left\{
\left(
\begin{array}{ccc}
0 & \xi_3 & 0 \\
-\xi_3^\star & 0 & 0 \\
0 & 0 & 0
\end{array}
\right),\, \xi_3\in\C^{n-1} 
\right\}
$$
of $\s$. It
is plain that $\u(n)\oplus(\sg_1\oplus\sg_2)$ is a Cartan decomposition of $\su(n,1)$ so that
  $\s_1\oplus\s_2$ is invariant under the adjoint action of $\U(n)$ and identifies with the tangent
  space to $\H=\SU(n,1)/\U(n)$ at $\U(n)$. Similarly, the 
  subspace $\s_2+\s_3$ is $H$-invariant and identifies with the tangent space of $\cal G=\SU(n,1)/H$ at
  $H$. The subspaces 
$\s_1$, $\s_2$ and $\s_3$ are invariant under the adjoint action of $\stab$ on $\s$, and therefore
define $C^\infty$ subbundles of the real tangent bundle of $\PTH$. The subbundle
  corresponding to $\s_1$, resp. $\s_3$, is the tangent bundle of the fibers of $\pig:\PTH\pfd \cal G$,
  resp. $\pi:\PTH\pfd\H$.

\medskip

Let $\o_1$, $\o_2$, $\o_3$ be the skew-symmetric $\R$-bilinear forms on $\sg$ given by
$$
\o_j(\xi,\eta)=2\rmo(\eta_j^\star \xi_j-\xi_j^\star \eta_j),
$$ 
for $\xi=(\xi_1,\xi_2,\xi_3)$ and $\eta=(\eta_1,\eta_2,\eta_3)$ in $\s$. These forms
are invariant by $\stab$ hence they define $\SU(n,1)$-invariant 2-forms on $\PTH=\SU(n,1)/\stab$ which will
be denoted by 
the same letters.

\begin{lemma}\label{transversesymplectic}
\begin{itemize}
\item The bilinear form $\o_1+\o_2$ defines the $\SU(n,1)$-invariant K\"ahler form $\o$ on $\H$
  normalized so as to have constant holomorphic sectional curvature $-1$. 
\item The bilinear form $\o_2-\o_3$ defines a $\SU(n,1)$-invariant non degenerate but indefinite
  K\"ahler form $\o_{\cal G}$ on the space of complex geodesics $\cal G$ of $\H$. 
\item The bilinear form $\o_1+\frac 12\,(\o_2+\o_3)$ defines a K\"ahler form $\pi^\star\o-\frac
  12\,\pig^\star\o_{\cal G}$ on the projectivized
  tangent bundle $\PTH$ of $\H$. It is the curvature form of the dual ${\mathcal O}_{\PTH}(1)$ of
  the tautological line bundle over $\PTH$ endowed with the natural metric induced by $T_\H$.
\end{itemize}
\end{lemma}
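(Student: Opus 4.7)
The plan is to exploit the fact that $\o_1,\o_2,\o_3$ are $\stab$-invariant skew-symmetric bilinear forms on $\sg\simeq T_\stab\PTH$ and hence extend by left translation to $\SU(n,1)$-invariant real $2$-forms on $\PTH=\SU(n,1)/\stab$; by this invariance, every identity in the statement can be checked at the single point $\stab$. For the first bullet, I would invoke the Cartan decomposition $\su(n,1)=\u(n)\oplus(\sg_1\oplus\sg_2)$ to identify $\sg_1\oplus\sg_2$ with $T_{\U(n)}\H$; the restriction of $b$ there is by hypothesis the invariant K\"ahler metric $g$ of holomorphic sectional curvature $-1$. Since the complex structure $J$ acts by multiplication by $\rmo$ in the parameters $\xi_1,\xi_2$, a direct matrix computation of $b(J\xi,\eta)=2\tr((J\xi)\cdot\eta)$ yields $b(J\xi,\eta)=\o_j(\xi,\eta)$ on each piece $\sg_j$ ($j=1,2$). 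Because $\o_1+\o_2$ vanishes on the vertical direction $\sg_3$ of $\pi$, it is $\pi$-basic, so this identifies it with $\pi^\star\o$.

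For the second bullet, both $\o_2$ and $\o_3$ vanish on $\sg_1$, the vertical direction of $\pig:\PTH\to\cal G$, so $\o_2-\o_3$ descends to an $\SU(n,1)$-invariant $2$-form $\o_{\cal G}$ on $\cal G$. I would identify $T_H\cal G\simeq\sg_2\oplus\sg_3$ with $\Hom_\C(V,\C^{n+1}/V)$ at $V=\mathrm{span}(e_n,e_{n+1})$; the complex structure inherited from the Grassmannian acts by multiplication by $\rmo$ on both $\sg_2$ and $\sg_3$, and the $J$-invariance of each $\o_j$ (immediate from its defining formula) then shows $\o_{\cal G}$ is of type $(1,1)$. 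Non-degeneracy on $\sg_2\oplus\sg_3$ is obvious, and closedness follows from the symmetric-pair structure: $\cal G=\SU(n,1)/H$ is semisimple symmetric for the involution $\Ad(\mathrm{diag}(I_{n-1},-I_2))$, with $[\sg_2\oplus\sg_3,\sg_2\oplus\sg_3]\subset\hg$, so every $\SU(n,1)$-invariant form on $\cal G$ is automatically closed. Uniqueness up to scalar follows from the $\C$-irreducibility of $\sg_2\oplus\sg_3$ as an $H$-representation, which forces the space of $H$-invariant real $(1,1)$-forms to be one-dimensional.

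For the third bullet, the identity $\pi^\star\o-\frac12\pig^\star\o_{\cal G}=\o_1+\frac12(\o_2+\o_3)$ at $\stab$ (hence everywhere, by invariance) is immediate from the first two items. To identify this form with the Chern curvature of $\mathcal{O}_{\PTH}(1)$, I would use the $\SU(n,1)$-equivariant isomorphism of line bundles $\mathcal{O}_{\PTH}(1)\simeq\pi^\star L_\H^{\otimes 2}\otimes\pig^\star(\det V)^{-1}$, where $L_\H=\mathcal{O}_{\CP^n}(-1)|_\H$ is the negative tautological bundle on $\H$ and $\det V=\Lambda^2 V$ is the determinant of the tautological $2$-plane on $\cal G\subset\mathrm{Gr}_2(\C^{n+1})$: indeed, at $(\ell,V)\in\PTH$ the tautological tangent line $\Hom(\ell,V\cap\ell^\perp)=\ell^\vee\otimes(V\cap\ell^\perp)$ identifies with $\ell^\vee\otimes\ell^\vee\otimes\Lambda^2V$ via the canonical pairing $\ell\otimes(V\cap\ell^\perp)\simeq\Lambda^2V$. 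The natural metric on $\mathcal{O}_{\PTH}(-1)$ induced by the K\"ahler metric of $\H$ corresponds under this isomorphism to the natural metrics on $L_\H$ and $\det V$ coming from $\pm h$, so taking Chern curvatures yields $\omega_{\mathcal{O}(1)}=2\pi^\star\omega_{L_\H}-\pig^\star\omega_{\det V}$. A direct computation in the ball model, using the normalization $b=2\tr$, then gives $\omega_{L_\H}=\tfrac12\o$ and $\omega_{\det V}=\tfrac12\o_{\cal G}$, which combine to the stated formula.

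The principal obstacle will be pinning down the factor-$\tfrac12$ normalizations $\omega_{L_\H}=\tfrac12\o$ and $\omega_{\det V}=\tfrac12\o_{\cal G}$: this requires a careful local-coordinate computation tying the constant $2$ in $b=2\tr$ (which is what forces holomorphic sectional curvature to equal $-1$) to the corresponding constants in the Chern curvatures of the tautological bundles on $\H$ and $\cal G$. The remaining steps are routine invariant-theoretic bookkeeping on the Lie algebra $\su(n,1)$.
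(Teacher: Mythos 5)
Your proposal is correct. For the first two items it follows essentially the paper's own route: one checks $\U(n)$- (resp.\ $H$-) invariance of the forms on the relevant pieces of $\sg$ and then establishes closedness; the paper writes $\o(\xi,\eta)=b(\ad(\zeta)\xi,\eta)$ and deduces $d\o_{\cal G}=0$ from the invariance of $b$ and the Jacobi identity, while you use the equivalent identity $b(J\xi,\eta)=\o_j(\xi,\eta)$ (which indeed checks out entrywise, and on $\sg_3$ gives $-\o_3$, consistent with $\o_{\cal G}=\o_2-\o_3$) together with the symmetric-pair argument $[\sg_2\oplus\sg_3,\sg_2\oplus\sg_3]\subset\hg$; these are interchangeable. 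The genuine divergence is in the third item: the paper simply quotes the curvature of ${\mathcal O}_{\PTH}(1)$ from Griffiths--Schmid and Cao--Wong, whereas you derive it from the equivariant holomorphic isomorphism ${\mathcal O}_{\PTH}(-1)\simeq (\ell^\vee)^{\otimes 2}\otimes\Lambda^2 V$, which is a correct and self-contained alternative. The ``principal obstacle'' you flag is in fact lighter than you fear: since all metrics in sight are $\SU(n,1)$-invariant, the metric identification under your isomorphism holds automatically up to a positive constant (irrelevant for curvature), and the two normalizations follow from adjunction and the Einstein constants rather than from local coordinates --- $\det T_{\H}=(\ell^\vee)^{\otimes(n+1)}$ together with $c_1(T_{\H})=-\frac{1}{2\pi}\,\frac{n+1}{2}\,\o$ gives $\sqrt{-1}\,\Theta(\ell)=\frac 12\,\o$, and $\det T_{\cal G}=(\Lambda^2V)^{-\otimes(n+1)}$ together with ${\rm Ric}=-\frac{n+1}{2}\,b$ on the symmetric space $\cal G$ gives $\sqrt{-1}\,\Theta(\Lambda^2 V)=\frac 12\,\o_{\cal G}$. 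What your route buys is independence from the cited references; what it costs is precisely this extra bookkeeping, which the paper avoids by citation.
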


\begin{proof} 
It is easily checked that the bilinear form
$\o_1+\o_2$ on $\sg_1\oplus\sg_2$ is invariant by $\U(n)$, hence that it defines a $\SU(n,1)$-invariant 2-form
$\o$ on $\H=\SU(n,1)/\U(n)$. The  
form $\o$ is closed (for example because it is a 2-form on a symmetric space and it is invariant by
the geodesic symmetries) and it is precisely the
invariant K\"ahler form on $\H$, normalized so as to have constant holomorphic sectional curvature
$-1$. It is also given by $\o(\xi,\eta)=b(\zeta,[\xi,\eta])=b({\rm ad}(\zeta)\xi,\eta)$ for
$\xi,\eta\in\sg_1\oplus\sg_2$, where $b$ is the Killing form on $\su(n,1)$
and $\zeta$ is the element of the 1-dimensional center of $\u(n)$ such that $\ad(\zeta)$ gives the invariant
complex structure of $\H$:       
$$
\zeta=\left(
\begin{array}{cc}
\frac{\rmo}{n+1}1_{n} & 0 \\
0 & -\frac{n\rmo}{n+1}  
\end{array}
\right)
$$
(Here and in the rest of the paper, if $k$ is an integer, $1_k$ denotes the identity matrix of size $k$.) 

\medskip

One also checks that the bilinear form $\o_2-\o_3$ on $\sg_2\oplus\sg_3$ is
invariant by $H$ and hence 
defines a $\SU(n,1)$-invariant form $\o_{\cal G}$ on $\cal G=\SU(n,1)/H$ that is indeed non degenerate (its
signature is $(n-1,n-1)$). Again,
this form can be computed as  $\o_{\cal 
  G}(\xi,\eta)=b(\zeta_\hg,[\xi,\eta])=b({\rm ad}(\zeta_\hg)\xi,\eta)$ for 
$\xi,\eta\in\sg_2\oplus\sg_3$, where  
$$
\zeta_\hg=\left(
\begin{array}{cc}
\frac{2\rmo}{n+1}1_{n-1} & 0 \\
0 & -\frac{(n-1)\,\rmo}{n+1}1_2  
\end{array}
\right)
$$
is the element of the 1-dimensional center of $\hg$ such that $\ad(\zeta_\hg)$ gives the invariant
complex structure of $\cal G$. The $\SU(n,1)$-invariance of the Killing form $b$ and the
  Jacobi identity imply that $\o_{\cal G}$ is
closed. 

\medskip

Endow the dual ${\mathcal O}_{\PTH}(1)$ of the tautological
line bundle over $\PTH$  with 
the natural metric induced from the one of $T_\H$. Its curvature form
is a positive
(1,1)-form and one can compute (see e.g.~\cite{GriffithsSchmid}*{Part 4} or~\cite{CW}*{(3.7)}) that: 
$$
\sqrt{-1}\Theta({\mathcal
  O}_{\PTH}(1))=\o_1+\frac{1}{2}(\o_2+\o_3)=(\o_1+\o_2)-\frac 12\,(\o_2-\o_3)=\pi^\star\o-\frac
12\,\pig^\star\o_{\cal G}.
$$
(Note  that we normalized the metric on $T_\H$ in order to have constant holomorphic
sectional curvature $-1$ and that $\om_3$ restricted to a fiber of $\pi$ is $2\,\om_{FS}$ in~\cite{CW}.) 
\end{proof}

\medskip

If $X=\G\backslash\H$ is a complex hyperbolic manifold, the $\SU(n,1)$-invariant
(1,1)-forms $\o$ on $\H$ and $\pig^\star\o_{\cal G}$ on $\PTH$ defined in Lemma~\ref{transversesymplectic} descend to closed forms on $X$ and $\PTX$
respectively which will be denoted by the same letters. The form $\o$ is the K\"ahler form of
$X$. The K\"ahler form $\pi^\star\o-\frac 12\,\pig^\star\o_{\cal G}$ is the curvature form of
${\mathcal O}_{\PTX}(1)$, which is isomorphic to the dual $L^\vee$ of the tangent bundle $L$ to the
tautological foliation $\T$ on $\PTX$.

The non degenerate indefinite K\"ahler form $\o_{\cal G}$ on $\cal G$ defines an invariant transverse K\"ahler
 form for the foliation $\T$ on the projectivized tangent bundle
$\PTX$ of $X$, hence an invariant transverse volume form and an invariant transverse measure $\mu_{\cal G}$ for the
foliation. We normalize the induced current of 
integration along the leaves of $\T$ so that for all compactly supported 2-form $\alpha$ on $\PTX$, 
$$
\into\alpha=\int_{\PTX}\alpha\wedge\Omegag,
$$     
where
$$
\Omegag:=\frac{(-1)^{n-1}}{(2n-2)!\,\vol(\CP^{n-1})}\,\pig^\star \o_{\cal G}^{2n-2}.
$$ 
The form $\Omegag$ is a closed basic semi-positive $(2n-2,2n-2)$-form of rank $4n-4$ on $\PTX$.

It is now easy to complete the proof of Proposition~\ref{prop:transverse}:

\begin{lemma}\label{compute} 
Let $\b$ be a compactly supported 2-form on $X$, then 
$$ \frac{1}{n!}\int_X \b\wedge\o^{n-1} =\int_\PTX \pi^\star\b\wedge\Omegag.
$$
\end{lemma}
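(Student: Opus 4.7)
The strategy is to reduce the identity to a pointwise equation between differential forms on $X$ via the projection formula, then determine this form using Lemma~\ref{transversesymplectic}. Since $\pi^\star\beta$ is a pullback, integration along the fibers of $\pi:\PTX\to X$ yields
$$\int_\PTX\pi^\star\beta\wedge\Omegag=\int_X\beta\wedge\pi_\star\Omegag,$$
so the lemma amounts to the identity $\pi_\star\Omegag=\o^{n-1}/n!$ on $X$.

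Lifting to $\PTH$, the form $\pi_\star\Omegag$ is $\SU(n,1)$-invariant, and since the space of $\SU(n,1)$-invariant $(n-1,n-1)$-forms on $\H$ is one-dimensional (by $\U(n)$-representation theory on $\Lambda^{n-1,n-1}T_o^*\H$), I can write $\pi_\star\Omegag=c\,\o^{n-1}$ for some constant $c$, and it suffices to pin down $c$.

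The key calculation uses Lemma~\ref{transversesymplectic}: with $\pi^\star\o=\o_1+\o_2$ and $\pig^\star\o_{\cal G}=\o_2-\o_3$, and since $\s_2$ and $\s_3$ each have real dimension $2n-2$ (so $\o_2^n=\o_3^n=0$), the binomial expansion of $(\o_2-\o_3)^{2n-2}$ collapses to the single surviving middle term $\binom{2n-2}{n-1}(-1)^{n-1}\o_2^{n-1}\wedge\o_3^{n-1}$. Hence
$$\Omegag=\frac{1}{((n-1)!)^2\vol(\CP^{n-1})}\,\o_2^{n-1}\wedge\o_3^{n-1}.$$
Wedging with $\pi^\star\o=\o_1+\o_2$, using $\o_1^2=0$ (as $\s_1$ is real 2-dimensional) together with $\o_2^n=0$, and the resulting identity $\o_1\wedge\o_2^{n-1}=\pi^\star\o^n/n$, I obtain the clean formula
$$\pi^\star\o\wedge\Omegag=\frac{1}{n\,((n-1)!)^2\vol(\CP^{n-1})}\,\pi^\star\o^n\wedge\o_3^{n-1}.$$

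Since $\o_3$ is $\pi$-vertical and restricts to $2\o_{FS}$ on each fiber (the remark closing Lemma~\ref{transversesymplectic}), $\int_{\text{fiber}}\o_3^{n-1}=(n-1)!\vol(\CP^{n-1})$ with the paper's normalization. Fubini then gives $\int_\PTX\pi^\star\o\wedge\Omegag=\frac{1}{n!}\int_X\o^n$. Comparing with $\int_X\o\wedge(c\,\o^{n-1})=c\int_X\o^n$ forces $c=1/n!$, as required. The main delicate points are the dimension-driven collapse of $(\o_2-\o_3)^{2n-2}$ to a single term (only possible because $\s_2$ and $\s_3$ have matched real dimension), and the careful bookkeeping of the normalization constants so that $\vol(\CP^{n-1})$ cancels correctly in the final expression.
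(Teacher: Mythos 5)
Your proof is correct, and the reduction step is genuinely different from the paper's. The paper works directly with $\int_\PTX\pi^\star\b\wedge\o_2^{n-1}\wedge\o_3^{n-1}$, replaces $\pi^\star\b$ pointwise by its component along $\o_1$, and then needs the fiberwise averaging identity $\int_{\pi^{-1}(x)}\la\pi^\star\b,\o_1\ra\,\frac{\o_3^{n-1}}{(n-1)!}=\frac{\vol(\CP^{n-1})}{n}\la\b,\o\ra_x$ for an \emph{arbitrary} $2$-form $\b$ (a small symmetry argument that the paper states without proof). You instead invoke the projection formula and the one-dimensionality of the space of $\U(n)$-invariants in $\Lambda^{n-1,n-1}T_o^\star\H$ to reduce everything to a single constant $c$, which you then pin down by testing against the single form $\o$ itself --- for which the fiber integral is just the fiber volume and no averaging is required. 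The computational core (the collapse of $(\o_2-\o_3)^{2n-2}$ to its middle binomial term, $\o_1\wedge\o_2^{n-1}=\frac1n(\pi^\star\o)^n$, and the normalization of $\int_{\rm fiber}\o_3^{n-1}$) is the same in both arguments; what your route buys is the elimination of the general averaging lemma at the cost of a small input from representation theory (multiplicity one of the trivial representation in $\Lambda^{n-1,n-1}(\C^n)^\star$, which is standard via the primitive decomposition). One cosmetic caveat: the lemma is stated for a possibly non-compact complex hyperbolic manifold $X$ (hence $\b$ compactly supported), so your final comparison ``$\int_\PTX\pi^\star\o\wedge\Omegag=\frac1{n!}\int_X\o^n=c\int_X\o^n$'' may pit $\infty$ against $\infty$. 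This is immediately repaired by making the comparison at the level of the fiber integration itself: your displayed identity gives $\pi_\star\bigl(\pi^\star\o\wedge\Omegag\bigr)=\frac1{n!}\,\o^n$ pointwise, while $\pi_\star\bigl(\pi^\star\o\wedge\Omegag\bigr)=\o\wedge\pi_\star\Omegag=c\,\o^n$, so $c=\frac1{n!}$ without integrating over $X$.
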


\begin{proof}
By $\SU(n,1)$-invariance the (1,1)-forms $\o_1$, $\o_2$ and $\o_3$ on $\PTH$ descend to forms on
$\PTX$ and we again have $\pig^\star\o_{\cal G}=\o_2-\o_3$. Let $\alpha$ be a compactly supported
$2$-form on $\PTX$. Then, 
$$
\begin{array}{rcl}
\ds \int_\PTX\a\wedge\o_2^{n-1}\wedge\o_3^{n-1} 
& = & \ds\frac{1}{2}\int_\PTX\la
\alpha,\o_1\ra\,\o_1\wedge\o_2^{n-1}\wedge\o_3^{n-1} \\
& = & \ds\frac{1}{2n}\int_\PTX\la \alpha,\o_1\ra\,(\pi^\star\o)^{n}\wedge\o_3^{n-1}\\
& = & \ds\frac{1}{2n}\int_X\left(\int_{\pi^{-1}(x)}\la \alpha,\o_1\ra\,\o_3^{n-1} \right)\o^n.
\end{array}
$$
If now $\beta$ is a compactly supported 2-form on $X$, one has
$$\int_{\pi^{-1}(x)}\la\pi^\star\beta,\o_1\ra\,\frac{\o_3^{n-1}}{(n-1)!}=\frac{{\rm
    vol}(\CP^{n-1})}{n}\la \beta,\o\ra_x.
$$
Hence
$$
\begin{array}{rcl}
\ds \int_\PTX\pi^\star\beta\wedge\o_2^{n-1}\wedge\o_3^{n-1} 
& = & \ds \frac{(n-1)!\,{\rm vol}(\CP^{n-1})}{2n^2}\int_X\la\beta,\o\ra\,\o^n \\
& = & \ds (n-1)!^2\,{\rm vol}(\CP^{n-1})\,\frac{1}{n!}\int_X\beta\wedge\o^{n-1},
\end{array}
$$
so that
$$
\begin{array}{rcl}
\ds  \frac{1}{n!}\int_X \b\wedge\o^{n-1} & = & \ds \frac{1}{{\rm vol}(\CP^{n-1})}\int_\PTX\pi^\star
\b\wedge\frac{\o_2^{n-1}}{(n-1)!}\wedge\frac{\o_3^{n-1}}{(n-1)!} \\
& = & \ds \frac{(-1)^{n-1}}{{\rm vol}(\CP^{n-1})}\int_\PTX\pi^\star
\b\wedge \frac{(\o_2-\o_3)^{2n-2}}{(2n-2)!}.
\end{array}
$$
\end{proof}

\begin{rema}\label{closedleaves2}
{\em Closed leaves II: convergence of currents.}  As we said in Remark~\ref{closedleaves1},
  the current of integration given by a closed leaf of a foliation is not in general
  well-behaved. 
 This is still true in the case of the tautological foliation on 
   the projectivized tangent bundle $\PTX$ of a complex hyperbolic manifold $X$. 
More importantly here, there is no direct relation, such as the one established in
Proposition~\ref{prop:transverse}, between the integration along a single closed leaf of the
tautological foliation and integration against $\o^{n-1}$.  
It is however possible  to exploit the existence of closed totally geodesic curves $C_i$ in
$X=\G\backslash\H$ to make such a relation, when there 
are infinitely many such curves. 
This is true for example if $\G$ is a so-called arithmetic lattice of type I of $\SU(n,1)$,
  and moreover in this case the 
  sequence of curves $C_i$ can be chosen so that no subsequence is contained in a proper totally
  geodesic submanifold of $X$. As is proved e.g. in~\cite{KozMobequidistrib}, this implies that the
currents $\int_{C_i}$ suitably normalized converge towards $\omega^{n-1}$. 
\end{rema}

\subsection{Some consequences of Ratner's theorem on orbit closures}\label{sec:closure}\hfill

\smallskip

We just saw that the tautological foliation $\T$ on the projectivized tangent bundle $\PTX$ of
  a complex hyperbolic manifold $X$ has a rich {\em transversal}
  structure. We consider now the {\em tangential} structure of the foliation $\T$ and  
  we state fundamental properties of its leaves which follow from the resolution by M. Ratner of
  Raghunathan's conjecture on orbit closures. 

\smallskip

In this section we come back to the setting of the paper so that $\G$ is a torsion free
  uniform lattice of $\SU(n,1)$ and $X=\G\backslash\H$ is therefore a compact complex hyperbolic
  manifold. 

\medskip

Let $\cal L$ be a leaf of the tautological foliation $\T$ on $\PTX=\G\backslash
\SU(n,1)/\stab$. In this section again, $\stab$ is short for 
${\rm S}(\U(n-1)\times\U(1)\times\U(1))$. The leaf $\cal L$ is of the form 
$\G\backslash \G \,U_\L\, g_\L \stab/\stab$ for some $g_\L\in \SU(n,1)$ and a group $U_\L$ locally
isomorphic to 
$\SU(1,1)$. Because $U_\L$ is generated by unipotent elements, it follows from the work of
Ratner~\cite{RatnerTopo} 
that the closure of the orbit $\G e\cdot U_\L$ in $\G\backslash \SU(n,1)$ is homogeneous, namely
that there exists a closed subgroup $S_\L$ of $\SU(n,1)$ such that $U_\L\subset S_\L$ and 
  $\overline{\G e\cdot U_\L}=\G e\cdot S_\L$. This implies that $\G\cap S_\L$ is a
  lattice in $S_\L$~\cite{Raghunathan}*{Theorem~1.13} and that $S_\L$ is a reductive group with
  compact center,  for example because $\rk_\R \SU(n,1)=1$~\cite{Shah}.   

By~\cite{P}, the fact that $\rk_\R U_\L=\rk_\R \SU(n,1)$ implies that the Lie algebra of $S_\L$ is stable
by the Cartan involution of $\SU(n,1)$ given by the point $g_\L \U(n)$ of $\H=\SU(n,1)/\U(n)$, so
that the orbit $\tilde Y_\L:=S_\L\cdot g_\L \U(n)$ of $g_\L \U(n)$ under $S_\L$ in $\H$ is a totally
geodesic submanifold and $Y_\L:=\G\backslash\G\tilde Y_\L$ is a closed 
immersed totally geodesic submanifold of $X=\G\backslash \SU(n,1)/\U(n)$. This submanifold is the closure of
the projection $\pi(\L)$ of $\L$ in $X$.

Summing up, we have:

\begin{prop}\label{prop:closure}
  Let $X$ be a compact complex hyperbolic manifold and let $\L$ be a leaf of
  the tautological foliation $\T$ on $\PTX$. The closure $\overline{\pi(\L)}$ 
  of the image of $\L$ 
  by the projection $\pi:\PTX\pfd X$ is a closed immersed totally geodesic submanifold of $X$.   
\end{prop}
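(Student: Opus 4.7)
The plan is to lift the question from $\PTX$ to the frame bundle $\G\backslash\SU(n,1)$, where Ratner's theorem on closures of orbits of unipotent-generated subgroups is directly applicable. First I would use the transverse homogeneous structure of $\T$ described in \textsection\ref{sec:transverse} to identify the leaf $\cal L$: since $\PTX=\G\backslash\SU(n,1)/\stab$ and the leaves of $\T$ are the fibers of $\pig:\PTH\pfd\cal G$, the leaf $\cal L$ is the projection of a coset of the form $\G\,U_\L\, g_\L\, \stab$ for some $g_\L\in\SU(n,1)$, where $U_\L\subset\SU(n,1)$ is a subgroup locally isomorphic to $\SU(1,1)$, in particular generated by unipotent elements.

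Next I would apply Ratner's topological rigidity theorem to the action of $U_\L$ on $\G\backslash\SU(n,1)$: the orbit closure $\overline{\G e\cdot U_\L}$ equals a homogeneous set $\G e\cdot S_\L$, where $S_\L$ is a closed subgroup of $\SU(n,1)$ containing $U_\L$. Raghunathan's theorem (on homogeneous orbits obtained this way) then gives that $\G\cap S_\L$ is a lattice in $S_\L$, and since $\rk_\R\SU(n,1)=1$, standard results on orbit closures in rank-one groups force $S_\L$ to be reductive with compact center.

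The key additional input is a theorem of G.~Prasad asserting that because $\rk_\R U_\L=\rk_\R\SU(n,1)$, the Lie algebra of $S_\L$ is stable under the Cartan involution of $\SU(n,1)$ attached to the point $g_\L\U(n)\in\H$. This stability is exactly the algebraic criterion guaranteeing that the orbit $\tilde Y_\L:=S_\L\cdot g_\L\U(n)\subset\H$ is a totally geodesic submanifold of $\H$. Passing to the $\G$-quotient, $Y_\L:=\G\backslash\G\tilde Y_\L$ is then a closed, immersed, totally geodesic submanifold of $X$; closedness of $Y_\L$ follows from the fact that $\G\cap S_\L$ is a lattice in $S_\L$, which makes the immersion proper.

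Finally, I would verify that $Y_\L=\overline{\pi(\cal L)}$: projecting the homogeneous identity $\overline{\G e\cdot U_\L}=\G e\cdot S_\L$ along the quotient map $\G\backslash\SU(n,1)\pfd X=\G\backslash\SU(n,1)/\U(n)$ sends $\G e\cdot U_\L$ to $\pi(\cal L)$ and $\G e\cdot S_\L$ to $Y_\L$, and since this quotient map is proper (the fiber $\U(n)$ is compact) it commutes with taking closures. The main obstacle is marshalling and correctly applying the chain of algebraic results (Ratner, Raghunathan, Shah, Prasad) to ensure that $S_\L$ has precisely the structure needed for $\tilde Y_\L$ to be totally geodesic; once this is in place, the geometric statement follows directly from the properness of the quotient by $\U(n)$.
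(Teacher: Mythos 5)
Your argument is essentially identical to the paper's proof: same identification of the leaf as the projection of $\G\, U_\L\, g_\L\, \stab$ with $U_\L$ locally isomorphic to $\SU(1,1)$, same chain Ratner--Raghunathan--Shah to get $S_\L$ reductive with a lattice, same use of the Cartan-involution stability of the Lie algebra of $S_\L$ to conclude that $S_\L\cdot g_\L\U(n)$ is totally geodesic, and the same passage to the quotient (your added remark on properness of the $\U(n)$-fibration is a nice touch the paper leaves implicit). The only slip is bibliographic: the stability of the Lie algebra of $S_\L$ under the Cartan involution is quoted by the paper from T.~Payne's article, not from a theorem of G.~Prasad.
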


The last proposition has
the following consequence on projections to $X$ of $\T$-saturated subsets of $\PTX$:
 
\begin{prop}\label{prop:dyn}
  Let $X$ be a compact complex hyperbolic manifold and let $S$ be a closed
  $\T$-saturated proper subset of $\PTX$. Then $\pi(S)$ is a proper subset of $X$. 
\end{prop}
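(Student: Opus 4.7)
My plan is to argue by contradiction, combining Proposition~\ref{prop:closure} with the Baire category theorem. Suppose $\pi(S)=X$; the strategy is to cover $S$ by closures of the leaves it contains, observe that each such closure projects to a \emph{proper} closed totally geodesic complex submanifold of $X$, and conclude via countability plus Baire that $\pi(S)$ has empty interior, contradicting the assumption.

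First, since $S$ is closed and $\T$-saturated, every leaf $\cal L\subset S$ satisfies $\overline{\cal L}\subset S$, and Proposition~\ref{prop:closure} says that $Y_{\cal L}:=\overline{\pi(\cal L)}$ is a closed immersed totally geodesic submanifold of $X$. Because $\pi(\cal L)$ is locally a complex geodesic and the only totally geodesic submanifolds of $\H$ containing a complex line are the complex hyperbolic subspaces $\mathbb H^{k}_{\C}$, the submanifold $Y_{\cal L}$ is a complex totally geodesic submanifold modeled locally on $\mathbb H^{k_{\cal L}}_{\C}$ for some $1\leq k_{\cal L}\leq n$. Tracing through the Ratner argument behind Proposition~\ref{prop:closure}, the closure $\overline{\cal L}$ in $\PTX$ identifies with the image of the projectivized complex tangent bundle of $\tilde Y_{\cal L}=S_{\cal L}\cdot g_{\cal L}\U(n)$, a bundle over $Y_{\cal L}$ with fiber $\CP^{k_{\cal L}-1}$. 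If $k_{\cal L}=n$, then $\tilde Y_{\cal L}=\H$ forces $S_{\cal L}=\SU(n,1)$ (the only $\theta$-stable reductive subgroup acting transitively on $\H$) and hence $\overline{\cal L}=\PTX$; this would give $S=\PTX$, contradicting the properness of $S$. Hence $k_{\cal L}\leq n-1$ for every $\cal L\subset S$, and each $Y_{\cal L}$ is a proper closed totally geodesic complex submanifold of $X$ of real codimension at least $2$.

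Then I would write
$$
\pi(S)=\bigcup_{\cal L\subset S}\pi(\cal L)\subset\bigcup_{\cal L\subset S}Y_{\cal L}
$$
and invoke countability: each closed totally geodesic complex submanifold of $X$ corresponds to a $\G$-conjugacy class of lattice in the $\SU(n,1)$-stabilizer of some $\mathbb H^{k}_{\C}\subset\H$, hence to a $\G$-conjugacy class of subgroups of the countable group $\G$; there are therefore only countably many such submanifolds. Consequently, $\pi(S)$ is contained in a countable union of closed nowhere-dense subsets of the compact manifold $X$, which by Baire's theorem has empty interior in $X$, contradicting $\pi(S)=X$.

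The main step to justify carefully is the identification of $\overline{\cal L}$ with the (image in $\PTX$ of the) projectivized complex tangent bundle of $\tilde Y_{\cal L}$ and the resulting dichotomy $Y_{\cal L}=X\Longleftrightarrow\overline{\cal L}=\PTX$. This dichotomy rests on Ratner's theorem (giving closedness of $\G g_{\cal L}S_{\cal L}$ in $\G\backslash\SU(n,1)$) together with the compactness of $\U(n)$ (to transfer closedness down to $X$), and it excludes the a priori possibility that a lower-dimensional totally geodesic $S_{\cal L}$-orbit projects onto a dense subset of $\H$.
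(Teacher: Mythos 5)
Your overall strategy is the same as the paper's: reduce everything to the countability of the closed immersed totally geodesic submanifolds of dimension at least $2$ in $X$, apply a Baire category argument, and dispose of the remaining case via the dichotomy ``$\overline{\pi(\L)}=X\Rightarrow S_\L=\SU(n,1)\Rightarrow\overline{\L}=\PTX$'', which contradicts the properness of the closed saturated set $S$. The Baire step and this dichotomy are handled correctly (your identification of $\overline{\L}$ with the full projectivized tangent bundle of $Y_\L$ is more than is needed, but harmless).

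The genuine gap is in the countability step, which is in fact the heart of the proof and occupies most of the paper's argument. You assert that each such submanifold corresponds to a $\G$-conjugacy class of subgroups of the countable group $\G$, ``hence'' there are only countably many submanifolds. That inference is false: a countable group can have $2^{\aleph_0}$ subgroups and $2^{\aleph_0}$ conjugacy classes of subgroups (a nonabelian free group already has uncountably many normal subgroups). Two points must be supplied. First, the relevant subgroups $\Lambda=\G\cap \mathrm{Stab}(\tilde Y)$ are \emph{finitely generated}, being lattices in the reductive stabilizer, and a countable group has only countably many finite subsets, hence only countably many finitely generated subgroups; this is how the paper obtains countability. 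Second, one must prove that the correspondence $Y\mapsto\Lambda$ is injective, i.e. that $\Lambda$ determines $\tilde Y$; this is not automatic, and the paper establishes it by combining a strengthened Borel density theorem (the Zariski closure of $\Lambda$ contains the simple noncompact factor $H$ of the stabilizer) with the flat quadrilateral theorem in the $\mathrm{CAT}(0)$ space $\H$ (two $H$-orbits at constant positive distance would produce a Euclidean quadrilateral, impossible since $\rk\,\H=1$). Without both points, your covering of $X$ by the sets $Y_\L$ is not known to be countable, and Baire's theorem cannot be invoked.
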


\begin{proof}
The key point is that there is at most a countable number of closed immersed totally geodesic
submanifolds of dimension at least 2 in $X$. This follows
from~\cite{RatnerMeasure}*{Theorem~1.1} but in our case a similar but simpler argument is available.

Let $Y\subset X$ be such a submanifold. This
  means that $Y=\G\backslash\G\tilde Y$ where $\tilde Y$ is a symmetric subspace of the noncompact
  type of $\H$ whose stabilizer $S$ 
  in $\SU(n,1)$ contains $\Lambda:=\G\cap S$ as a lattice. Moreover, there exists $y\in \H$ and a
  simple (because $\rk_\R \SU(n,1)=1$) noncompact subgroup $H$ of $S$ such that $\tilde
Y=S\cdot y= H\cdot y$.

We claim that $\tilde Y$, and hence $Y$, is entirely determined by the intersection $\Lambda=\G\cap
S$. Indeed, let $Y'$ be another closed immersed totally geodesic submanifold of dimension at
least 2 of $X$, let $\tilde Y'$, $S'$, $\Lambda'$, $H'$ and $y'$ be defined as above for $Y$, and
assume that $\Lambda'=\Lambda$.   

By a strengthening
of the Borel density theorem, see e.g.~\cite{Dani}*{Corollary~4.2}, since $\Lambda$
is a lattice in $S$ and $H$ is a  simple noncompact subgroup of $S$, the Zariski closure 
$\ov{\Lambda}^{\,\textsf z}$  of $\Lambda$ in $\SU(n,1)$ contains $H$. Therefore
$H\subset\ov{\Lambda}^{\,\textsf z}=\ov{\Lambda'}^{\,\textsf
  z}\subset S'$ because $S'$ is Zariski-closed. In the same way,
$H'\subset S$.  

If $d$ denotes the
distance function on $\H$, the function
 $x\mapsto d(x,\tilde Y')$ is constant on $\tilde Y$ because
$\tilde Y$ is an $H$-orbit, $\tilde Y'$ is an $S'$-orbit and $H\subset S'$. It must be identically zero, because if not, the convex hull of two distinct points in $\tilde Y$ and
their (distinct) projections in $\tilde Y'$ is Euclidean by the flat quadrilateral
theorem~\cite{BH}*{p.~181}, a contradiction since $\rk\, \H=1$.  Hence $\tilde
Y\subset\tilde Y'$. The same reasoning gives $\tilde Y'\subset\tilde Y$. This
is what we wanted. 

Since $\Lambda=\G\cap S$ is finitely generated because it is a lattice in $S$, and since there are
only countably many finite subsets in $\G$, this indeed proves that there are at most countably many closed
immersed totally geodesic submanifolds of dimension at least 2 in $X$.  

\smallskip

To conclude,  let $S$ be a closed $\T$-saturated proper subset of $\PTX$ and assume that $\pi(S)=X$. Then
because $S$ 
is a union of leaves, $X$ is the union of the projections of the leaves of $S$, hence of their
closures. By Proposition~\ref{prop:closure}, these closures are closed immersed totally geodesic
submanifolds of $X$ of dimension at least 2. Since there are only countably many such objects,  
there must be a leaf $\L\subset S$ such that $\overline{\pi(\L)}=X$. But
  $\overline{\pi(\L)}$ is the projection to $X$ of the totally geodesic orbit $\tilde Y_\L=S_\L\cdot
g_\L\U(n)$ in $\H$, so this orbit must be the whole $\H$, and $S_\L$ being reductive,
this implies that $S_\L=\SU(n,1)$,
so that $\overline \L=\PTX$. Hence $S=\PTX$, for $S$ is closed. A contradiction.
\end{proof}

\section{Representations in $\SU(p,q)$, $p\geq q$}\label{max}

\subsection{Strategy of the proof}\label{strategy}\hfill 

\smallskip

This section is devoted to the proof of Theorem~\ref{main} and Corollary~\ref{description} in
the case of representations in the group $\SU(p,q)$. 
Representations in the other classical Hermitian Lie groups will be treated in \textsection
\ref{sec:others} using results of this section. 

Our primary goal here is to prove:

\begin{theo}\label{thm:maxreduc}
  Let $\r$ be a reductive representation of a (torsion free) uniform lattice
  $\G$ of $\SU(n,1)$ in $\SU(p,q)$, $p\geq q\geq 1$. If $\r$ is maximal and $n\geq 2$ then the
  $\r$-equivariant harmonic map $f$ from 
  $\H$ to the symmetric space $\Y_{p,q}$ of $\SU(p,q)$ is holomorphic or antiholomorphic.
\end{theo}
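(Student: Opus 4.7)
The plan is to transfer the Higgs-bundle analysis to the projectivized tangent bundle $\pi:\PTX\to X$, where the tautological foliation $\T$ together with its invariant transverse measure $\mu_{\cal G}$ provides the leafwise polystability of Proposition~\ref{dec}, and to run a Bradlow--Garc\'\i a-Prada--Gothen style Arakelov-type argument along complex geodesics. The equality case of the resulting Milnor--Wood inequality should force $\tilde\gamma_{|L}\equiv 0$ (or $\tilde\beta_{|L}\equiv 0$) for the pulled-back Higgs bundle, and Proposition~\ref{prop:dyn} will then globalise this to $\gamma\equiv 0$ (or $\beta\equiv 0$) on $X$, which amounts exactly to the (anti)holomorphy of $f$.

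First, I would write $(E,\theta)$ in its $\SU(p,q)$-form: the real structure induces a harmonic-metric-orthogonal holomorphic splitting $E=V\oplus W$ with $\rk V=p$, $\rk W=q$, and $\theta=\beta+\gamma$, where $\beta\in H^0(X,\Hom(V,W)\otimes\Omega^1_X)$ and $\gamma\in H^0(X,\Hom(W,V)\otimes\Omega^1_X)$; the harmonic map $f$ is holomorphic iff $\gamma\equiv 0$ and antiholomorphic iff $\beta\equiv 0$. Flatness yields $\deg V+\deg W=0$, and $\tau(\rho)$ is (up to a fixed positive constant) equal to $\deg V$. Pulling back to $\PTX$ and restricting $\tilde\theta=\pi^\star\theta$ to the tangential line bundle $L$ produces $\tilde\beta_{|L}:\tilde V\to\tilde W\otimes L^{\vee}$ and $\tilde\gamma_{|L}:\tilde W\to\tilde V\otimes L^{\vee}$, and the subsheaves $\tilde{\cal N}:=\ker\tilde\beta_{|L}\subset\tilde V\subset\tilde E$ and $\tilde{\cal K}:=\ker\tilde\gamma_{|L}\subset\tilde W\subset\tilde E$ are leafwise Higgs subsheaves of $(\tilde E,\tilde\theta_{|L})$, because $\gamma$ kills $V$-components and $\beta$ kills $W$-components.

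Applying Proposition~\ref{dec}(1) yields $\degog\tilde{\cal N}\leq 0$. Combined with the injection $\tilde V/\tilde{\cal N}\hookrightarrow\tilde W\otimes L^{\vee}$, the identity $\degog\pi^\star{\cal F}=\deg{\cal F}$ from Proposition~\ref{prop:transverse}, and the computation $\degog L^{\vee}=\frac{1}{n!}\int_X\omega^n$ (via Lemma~\ref{transversesymplectic} together with the dimensional vanishing $\into\pig^\star\omega_{\cal G}=0$, since $\omega_{\cal G}$ lives on a space of complex dimension $2n-2$), this reproves the Milnor--Wood bound $|\tau(\rho)|\leq q\,\vol(X)$. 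In the maximal regime (say $\tau(\rho)=+q\,\vol(X)$), every inequality in the chain is saturated: $\degog\tilde{\cal N}=0$, $\tilde\beta_{|L}$ has generic rank $q$, and $\tilde V/\tilde{\cal N}\cong\tilde W\otimes L^{\vee}$. Proposition~\ref{dec}(2) then produces a $\T$-saturated analytic subset $S\subsetneq\PTX$ of codimension $\geq 2$ outside of which $\tilde E=\tilde{\cal N}\oplus\tilde{\cal N}^{\perp}$ is an $\tilde\theta_{|L}$-stable orthogonal splitting holomorphic along the leaves of $\T$. Orthogonality forces $\tilde\gamma_{|L}(\tilde W\otimes L)\subset\tilde V':=\tilde{\cal N}^{\perp}\cap\tilde V$, where $\tilde V'\cong\tilde W\otimes L^{\vee}$ via $\tilde\beta_{|L}$, and a second Arakelov-type analysis on the sub-Higgs-bundle $\tilde V'\oplus\tilde W$ (which, being a summand, inherits leafwise semistability, and on which the integrability $\theta\wedge\theta=0$ and the real $\SU(p,q)$-structure impose strong constraints) should force $\tilde\gamma_{|L}\equiv 0$ on $\PTX\setminus S$.

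Finally, Proposition~\ref{prop:dyn} gives that $\pi(S)\subsetneq X$ is a closed proper subset; any $x\in X\setminus\pi(S)$ has its whole fiber $\pi^{-1}(x)\cong\CP^{n-1}$ inside $\PTX\setminus S$, so $\gamma_x$ vanishes on every line $\mathbb{C}v\subset T_{X,x}$, hence $\gamma_x=0$. Holomorphicity of $\gamma$ on $X$ and the nonempty open set $X\setminus\pi(S)$ then propagate this to $\gamma\equiv 0$ on $X$, yielding that $f$ is holomorphic; the case $\tau(\rho)<0$ is entirely symmetric. The main obstacle is the iterated equality-case step, namely establishing that the maximal condition actually forces $\tilde\gamma_{|L}\equiv 0$ rather than only constraining $\tilde{\cal N}$: this requires balancing the two Milnor--Wood chains attached to $\tilde{\cal N}$ and $\tilde{\cal K}$ simultaneously, genuinely exploits the real structure (so that the leafwise polystable summand $\tilde V'\oplus\tilde W$ forces its own tight Arakelov inequality), and ultimately relies on the pluriharmonicity of $f$ through the integrability of $\theta$. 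Once this is in hand, the dynamical passage via Proposition~\ref{prop:dyn} from the $\T$-saturated open subset $\PTX\setminus S$ to an open subset of $X$ is immediate.
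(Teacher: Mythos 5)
Your overall architecture is the paper's: pull the Higgs bundle back to $\PTX$, use leafwise semistability and weak polystability with respect to $\mu_{\cal G}$ (Proposition~\ref{dec}) to recast Milnor--Wood as $|\degog \tilde W|\leq q\,\degog L^\vee/2$, show in the equality case that the singular locus of the generically injective component of the leafwise Higgs field is $\T$-saturated, and invoke Proposition~\ref{prop:dyn} to project it to a proper analytic subset of $X$. Two remarks on the first half. The Milnor--Wood step cannot be run from the kernel subsheaf and the injection $\tilde V/\tilde{\cal N}\hookrightarrow\tilde W\otimes L^\vee$ alone: an injection of sheaves does not bound foliated degrees since $\tilde W\otimes L^\vee$ is not known to be leafwise semistable. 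You need the \emph{second} leafwise Higgs subsheaf built from the saturation $\cal I$ of the image (the analogue of $\tilde W\oplus\cal I$ in Proposition~\ref{prop:MW}) to control $\degog\cal I$; this is also what later makes the singular locus $\T$-saturated in Lemma~\ref{lem:sat}.

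The genuine gap is the step you yourself flag as ``the main obstacle'': deducing $\tilde\gamma_{|L}\equiv 0$ from the saturated equality case. The mechanism you propose --- a second, iterated leafwise Arakelov analysis on the polystable summand --- cannot work, because any argument that only uses the restriction of the Higgs bundle to individual leaves applies verbatim to $n=1$, where maximal surface group representations in $\SU(p,q)$ are in general \emph{not} (anti)holomorphic (they can even be deformed to Zariski-dense representations in a tube-type subgroup). Note also that on a one-dimensional leaf the integrability $[\theta,\theta]=0$ is vacuous, so name-checking it inside a leafwise framework does not help. What the equality case actually yields, at a regular point $x$ and for a fixed direction $\xi\in T_{X,x}\setminus\{0\}$, is only that the orthogonal complement $I_\xi^\perp$ of the image of the injective map $\beta_x(\xi):W_x\to V_x$ is $\theta_x(\xi)$-invariant, hence $\gamma_x(\xi)$ vanishes on $I_\xi^\perp$ --- for that \emph{same} $\xi$. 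The paper then needs two genuinely multi-directional inputs that are absent from your proposal: (i) integrability plus the injectivity of $\beta_x(\xi)$ give $\Ker\gamma_x(\xi)\subset\Ker\gamma_x(\eta)$ for all $\eta$ (Lemma~\ref{lem:vanish}), so every $\gamma_x(\eta)$ kills every $I_\xi^\perp$; and (ii) a Hartogs-type extension across $0\in T_{X,x}$, valid precisely because $\dim_\C T_{X,x}=n\geq 2$, shows $\bigcap_{\xi\neq 0}I_\xi=\{0\}$ (Lemma~\ref{lem:intersect}), so the spaces $I_\xi^\perp$ span $V_x$ and $\gamma_x=0$. Step (ii) is the only place where $n\geq 2$ enters the proof; without it your argument proves too much, and with it the passage through a ``second Arakelov inequality'' is unnecessary.
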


We will explain in \textsection \ref{sec:end} why Theorem~\ref{main} and
Corollary~\ref{description} for reductive representations in $\SU(p,q)$ follow from this result. 

As we said in the 
introduction, it is a theorem of~\cite{BIW09} that maximal representations are necessarily
reductive, so that non reductive ones could be excluded from the very beginning. We will
nevertheless discuss (and indeed exclude, eventually)  
non reductive representations in \textsection \ref{nonreductive}, for two reasons. Firstly, the
arguments of~\cite{BIW09} are quite different from those of the present paper, and we wish to be as 
self-contained as possible. Secondly and more importantly, it is interesting to see how
the rigidity of reductive maximal representations in turn implies that non reductive
ones don't exist.   

\medskip

Our approach to Theorem~\ref{thm:maxreduc} is based on the study of the real Higgs bundle $(E,\t)$ over the
compact complex hyperbolic 
manifold $X=\Gamma\backslash\H$ constructed from the $\r$-equivariant harmonic map
$f:\H\pfd\Y_{p,q}$ (which exists since $\r$ 
is reductive), see \textsection \ref{higgs}. 

After some preliminaries on the group $\SU(p,q)$ and its symmetric space $\Y_{p,q}$, the {\em real
structure}  of the Higgs bundle $(E,\t)$ will be described in \textsection \ref{sec:real}. We shall
see that $E$ is a  
direct sum $V\oplus W$ and that the Higgs field $\t$ has two components $\b:W\otimes T_X\pfd V$ and
$\g:V\otimes T_X\pfd W$ corresponding respectively to the holomorphic and antiholomorphic parts of
the $\r$-equivariant harmonic map $f$, so that $f$ is holomorphic, resp. antiholomorphic, if and
only if $\g=0$, resp. $\b=0$.    

\medskip

The proof of Theorem~\ref{thm:maxreduc} then proceeds
in three steps. 

\smallskip

The first step is a new proof of the Milnor-Wood inequality 
obtained by pulling back the Higgs
bundle $(V\oplus W,\b\oplus\g)$ over $X$ to a Higgs bundle
$(\tilde V\oplus \tilde W,\tilde\b\oplus\tilde\g)$ over the projectivized tangent bundle $\PTX$ with
its tautological foliation 
$\T$, so that the ideas concerning foliated Higgs bundles developed in \textsection
\ref{higgsfol} and  
\textsection \ref{sec:transverse} can come into play.  

Proposition~\ref{prop:transverse} is used to show that the Milnor-Wood inequality is equivalent to
an inequality between the foliated degrees of certain bundles on $\PTX$, namely
$$
|\degog \tilde W|\leq q\,\,\frac{\degog L^\vee}{2}
$$ 
where $L^\vee$ is the dual of the tangent line bundle $L$ to the foliation $\T$. 

Thanks to the semistability statement of Proposition~\ref{dec}, and because the leaves of $\T$ are
complex curves, a more precise statement (Proposition~\ref{prop:MW}) is proved, exactly in
the same way as   
for surface groups, i.e. lattices in $\SU(1,1)$, see e.g.~\cites{Xia,MarkmanXia,BGPG1}. 

\smallskip

The second step is an analysis of the singular loci of the components $\b$
and $\g$  of the
Higgs field. Consider $\b$ for example. Define the singular locus $\S_{\tilde\b}$ of
  $\tilde\b$ as the following subset of $\PTX$:    
$$
\S_{\tilde\b}=\{\xi\in\PTX\,\mid\,\rk\,\tilde\b_\xi<\rk\,\tilde\b\}
$$
where $\rk\,\tilde\b$ is the generic rank of $\tilde\b:\tilde W\otimes L\pfd \tilde V$, and the
singular locus 
$\S_\b$ of $\b$ as the projection $\pi(\S_{\tilde\b})$ of $\S_{\tilde\b}$ to $X$.   

The set $\S_{\tilde\b}$ is a proper analytic subset of $\PTX$. We want to prove that
when the 
inequality of Proposition~\ref{prop:MW} is an equality, and say 
$\degog \tilde W>0$, then  $\S_\b$ is also a proper analytic subset in $X$.  
This is achieved by first proving  that $\S_{\tilde\b}$ is saturated under the tautological
foliation $\T$. This follows  
from our proof of the inequality, and from the weak polystability of $(\tilde E,\tilde \t)$ along
the leaves of $\T$, see Proposition~\ref{dec}.  
We may then apply Proposition~\ref{prop:dyn} which indeed implies that $\S_\b$ is a proper
analytic subset of $X$. 
    
\smallskip

The third step is the conclusion where we prove that in the maximal case, and say if $\tau(\r)>0$,
the injectivity of $\b_x(\xi)$ for all 
$x \in X\backslash \S_\b$ and $\xi\in T_{X,x}\backslash\{0\}$ forces $\g$ to vanish, hence the
$\rho$-equivariant harmonic map to be holomorphic. Here we use the integrability condition $[\t,\t]=0$ and our standing assumption that $n\geq 2$.      
 If $\tau(\r)<0$, the harmonic map is
  proved to be antiholomorphic by considering $\g$ instead of $\b$.

\subsection{Preliminaries} \label{sec:prelim} \hfill

\subsubsection{The symmetric space $\Y_{p,q}$}\label{sec:Ypq}\hfill

\smallskip

We recall here some necessary facts on the symmetric space $\Y_{p,q}$ associated to the group
$\SU(p,q)$.
We refer to~\cite{KozMobRank2}*{\textsection 3.1} for details.   

\medskip
 
Let $\E$ be the vector space $\C^{p+q}$ equipped with a Hermitian form $h_{p,q}$ of signature
$(p,q)$, $p\geq q$. The group $\SU(p,q)=\SU(\E,h_{p,q})$ acts transitively on $\Y_{p,q}$, the open subset of
the Grassmann manifold of $q$-dimensional subspaces of $\E$ consisting of $q$-subspaces on which
$h_{p,q}$ restricts to a negative definite Hermitian form. Let $\W\subset\E$ be a point in
$\Y_{p,q}$, and $\V\subset\E$ be its orthogonal complement w.r.t. $h_{p,q}$. 
The stabilizer of $\W$ is a maximal compact subgroup of $\SU(p,q)$ and is isomorphic to ${\rm
  S}(\U(p)\times\U(q))$. Hence $\Y_{p,q}=\SU(p,q)/{\rm S}(\U(p)\times\U(q))$. As a bounded symmetric
domain, it is naturally identified with $\{Z\in  
M_{p,q}(\C)\,|\,1_q-Z^\star Z>0\}\subset\C^{pq}$. The rank of the symmetric space 
  $\Y_{p,q}$ is $\min\{p,q\}=q$.

We have the tangent spaces identifications 
$T_{\Y_{p,q},\W}\simeq T^{1,0}_\W \Y_{p,q}\simeq \W^\star \otimes\V$ and $T^{0,1}_\W \Y_{p,q}\simeq \V^\star\otimes \W$. 
We normalize the $\SU(p,q)$-invariant metric $\o_{\Y_{p,q}}$ on $\Y_{p,q}$ so that, representing an element of
$T_{\Y_{p,q},\W}$ by a matrix in  
$M_{p,q}(\C)$, the holomorphic sectional
curvature for the complex line $\la A\ra$ generated by a nonzero $A\in M_{p,q}(\C)$ is given by 
$$
\kappa(\la A\ra)=-\frac{\tr\bigl(\left(A^\star A\right)^2\bigr)}{\left(\tr\left(A^\star A\right)\right)^2}.
$$
This formula shows that $\kappa(\la A\ra)$ is pinched between $-1$ and $-1/q$ and that $\kappa(\la
A\ra)=-1/q$ 
if and only if the column vectors of $A$ are pairwise orthogonal and have the same norm (for the
standard Hermitian scalar product in $\C^p$). 

The symmetric space $\Y_{p,q}$ is a K\"ahler-Einstein manifold, and with our curvature normalization,
the first Chern form of  its holomorphic tangent bundle $T_{\Y_{p,q}}$ is
$c_1(T_{\Y_{p,q}})=-\frac{1}{2\pi}\,\frac{p+q}{2}\,\o_{\Y_{p,q}}$.

\subsubsection{The real structure of a $\SU(p,q)$-Higgs bundle}\label{sec:real}\hfill

\smallskip

Harmonic Higgs bundles arising from reductive representations into real
reductive subgroups $G$ of 
$\SL(N,\C)$ have an additional {\em real structure},  compared to
those arising from representations in $\SL(N,\C)$ without further restriction,
see  e.g.~\cite{S2}*{p.~90-91} or~\cite{Maubon}*{\textsection 3.6}.

We describe this real structure in our case, namely for $G=\SU(p,q)\subset\SL(p+q,\C)$. More details
can be found for example in~\cite{KozMobRank2 }*{\textsection 2.4 \& \textsection 3.2} or
\cite{Maubon}*{\textsection 3.6.2 \& \textsection 3.6.3}. 

The first observation is that a Higgs bundle $(E,\t)\pfd \mf$ associated to a reductive
representation in $\SU(p,q)$ of the 
fundamental group of a compact K\"ahler manifold $\mf$ splits 
holomorphically as a sum $E=V\oplus W$, where $V$ has rank $p$ and $W$ has rank $q$. Indeed, as
  a smooth bundle, $E$ is the flat bundle associated to the standard representation of $\SL(p+q,\C)$ on
$\E=\C^{p+q}$. But the
$\r$-equivariant harmonic map $f:\tilde\mf\pfd\Y_{p,q}$ defines a reduction of its structure group to the
maximal compact subgroup ${\rm S}(\U(p)\times\U(q))$ of $\SU(p,q)$, hence to its complexification ${\rm
  S}(\GL(p,\C)\times\GL(q,\C))$. Since this group preserves the decomposition $\E=\V\oplus\W$, we
indeed get the holomorphic splitting $E=V\oplus W$. 

Note that $\deg V+\deg W=\deg E=0$  because $E$ is flat, and that since the harmonic metric on $E$ is
defined by the reduction of the structure group of $E$ to ${\rm S}(\U(p)\times\U(q))$, the
direct sum $E=V\oplus W$ is orthogonal for the harmonic metric.

The Higgs field is by construction the $(1,0)$-part $d^{1,0}f:T^{1,0}\tilde\mf\pfd T^\C\Y_{p,q}$
of the complexified differential of the harmonic map $f$, seen as an endomorphism of $E$ in the
following way.  The
complexified tangent bundle $T^\C\Y_{p,q}$ is the bundle on $\Y_{p,q}=\SU(p,q)/{\rm
  S}(\U(p)\times\U(q))$ associated to the 
adjoint action of ${\rm S}(\GL(p,\C)\times\GL(q,\C))$ on
$T^\C_\W\Y_{p,q}=T^{1.0}_\W\Y_{p,q} \oplus T^{0.1}_\W\Y_{p,q}
=(\W^\star\otimes\V)\oplus(\V^\star\otimes\W)$. Therefore the  pull-back bundle $f^\star T^\C\Y_{p,q}$
over $\mf$ is the subbundle  $(W^\star\otimes V)\oplus(V^\star\otimes W)$ of $\End(E)$ and the Higgs
field $\t$ is $d^{1,0}f$
seen as a holomorphic $1$-form with values in 
this bundle.

This means that  the Higgs field seen as a sheaf morphism 
$\t:E\otimes T_\mf\pfd E$ is off-diagonal w.r.t. the decomposition $E=V\oplus W$: it has two components
$\b:W\otimes T_\mf\pfd V$ and $\g:V\otimes T_\mf\pfd W$.  Moreover the vanishing of $\g$,
resp. $\b$, exactly means that $d^{1,0}f:T^{1,0}\tilde\mf\pfd T^\C\Y_{p,q}$ 
maps $T^{1,0}\tilde Y$ to $T^{1,0}\Y_{p,q}$, resp. $ T^{0,1}\Y_{p,q}$, i.e. that the harmonic map $f:\tilde\mf\pfd\Y_{p,q}$ is
holomorphic, resp. antiholomorphic. 

\subsection{Proof of Theorem~\ref{thm:maxreduc}}\label{sec:reduc}\hfill

\smallskip
 
In this section $\G$ is a torsion free uniform
complex hyperbolic lattice $\G$ in $\SU(n,1)$, $X=\G\backslash\H$ the corresponding compact complex
hyperbolic manifold, $\pi:\PTX\pfd X$ the projectivized tangent bundle of $X$,  $\T$ the tautological
foliation on $\PTX$, $\r$ a reductive representation of $\G$ in $\SU(p,q)$ and  
$(E,\t)=\ds(V\oplus W,\left(\begin{array}{cc} 0 & \b\\ \g & 0\end{array}\right))$ the
$\SU(p,q)$-Higgs bunde over $X$ associated to $\r$. 

\medskip

We consider the pull-back $(\pi^\star E,\pi^\star\t)$ of the Higgs bundle $(E,\t)\pfd X$ to the
projectivized tangent bundle $\PTX$. This is the harmonic Higgs bundle over $\PTX$ associated to the representation of $\pi_1(\PTX)\simeq\pi_1(X)$ induced by $\rho$.  
To lighten the notation, pulled-back objects will be denoted with a ``$\sim$''. In
particular, $\tilde W$ is the rank $q$ holomorphic bundle $\pi^\star W$ on $\PTX$, $\tilde V$ is the rank $p$
holomorphic bundle $\pi^\star V$, and 
$$
\left\{
  \begin{array}{l}
 {\tilde\beta}:\tilde W\otimes L\fd \tilde V\\
{\tilde\gamma}:\tilde V\otimes L\fd \tilde W   
  \end{array}
\right.
 $$ 
are the two components of the lifted Higgs field $\tilde \t$ restricted to the tangent line bundle $L$ of
the tautological foliation ${\T}$ on $\PTX$.  
(From now on, we shall denote by
the same letter a vector bundle defined on $\PTX$, or $X$, and the sheaf of its sections.) 

\medskip

Summing up, on the projectivized tangent bundle $\PTX$, we have the harmonic $\SU(p,q)$-Higgs bundle
$(\tilde E,\tilde \t)$ and the tautological
foliation $\T$ with its invariant transverse measure $\mu_{\cal G}$ given by the transverse indefinite K\"ahler
form $\o_{\cal G}$, see Proposition~\ref{prop:transverse}. By Proposition~\ref{dec}, $(\tilde
E,\tilde \t)$ is weakly polystable along the leaves of $\T$ with respect to $\mu_{\cal G}$ and we will
exploit this fact to prove 
%the Milnor-Wood inequality satisfied by $\tau(\rho)$ and then   
Theorem~\ref{thm:maxreduc}. 

\medskip

From now on, foliated degrees of sheaves on $\PTX$ will always be computed with the
  transverse measure $\mu_{\cal G}$. Hence we will abbreviate the
  notation $\degog$ to $\dego$.

\subsubsection{Milnor-Wood inequality}\label{sec:ineq}\hfill 

\smallskip

We begin by reformulating
  the Milnor-Wood inequality on the Toledo invariant of $\r$ in 
terms of foliated degrees of vector bundles on $\PTX$.

\medskip

If $f:\H\pfd\Y_{p,q}$ is the $\r$-equivariant harmonic map, the Toledo invariant of $\r$ is given by 
$$
\tau(\r)=\frac{1}{n!}\int_X f^\star\o_{\Y_{p,q}}\wedge \o^{n-1}. 
$$
We saw in \textsection \ref{sec:Ypq} and \textsection \ref{sec:real} that $c_1(
T_{\Y_{p,q}})=-\frac{1}{2\pi}\,\frac{p+q}2\,\o_{\Y_{p,q}}$ and $f^\star T_{\Y_{p,q}}\simeq
W^\star\otimes V$, where $f^\star T_{\Y_{p,q}}$ is here seen as a bundle on $X=\G\backslash\H$ by
$\r$-equivariance of $f$. Remembering that $\deg V=-\deg W$, we get
$$
\tau(\r)=-\frac{4\pi}{p+q}\,\deg f^\star T_{\Y_{p,q}}=-\frac{4\pi}{p+q}\, (-p\,\deg W+q\,\deg V)=4\pi\,\deg W=4\pi\, \dego\tilde W, 
$$
where the last equality is given by Proposition~\ref{prop:transverse}.

On the other hand, by the last item of Lemma~\ref{transversesymplectic} we have 
$c_1(L^\vee)=\frac{1}{2\pi}(\pi^\star\o-\frac 12\,\pi_{\cal G}^\star\o_{\cal G})$, so that again by
Proposition~\ref{prop:transverse} and the definition of the invariant transverse measure $\mu_{\cal G}$ just
before Lemma~\ref{compute}: 
$$
\dego L^\vee
=\frac{1}{2\pi}\int_\PTX (\pi^\star\o-\frac 12\,\pi_{\cal G}^\star\o_{\cal G})\wedge\Omegag
=\frac{1}{2\pi}\int_\PTX \pi^\star\o\wedge\Omegag
=\frac{1}{2\pi \,n!}\int_X \o^n
=\frac 1{2\pi}\,{\vol}(X).
%=2\,{\rm vol}({\mathbb C\mathbb P}^{n-1})\,\frac{\deg K_X}{n+1}.
$$
Therefore the Milnor-Wood inequality $|\tau(\r)|\leq q\,\vol(X)$ for reductive representations is equivalent to 
$$
|\dego \tilde W|\leq q\,\,\frac{\dego L^\vee}2
$$
and reductive maximal representations are those for which this inequality is an equality. 
The proof of this inequality will mimic the ``Higgs bundles proof'' of the Milnor-Wood
  inequality in the one dimensional case, i.e. for representations of surface groups (see
  e.g.~\cites{Xia}). It is based on 
  the semistability of the Higgs bundle $(\tilde E,\t)$ along the leaves of $\T$.
  
To be more precise, let $\rk\, \tilde\b$ and  $\rk\,\tilde\g$ be the generic ranks of $\tilde \b:
    \tilde W\otimes L\pfd\tilde V$ and $\tilde \g:\tilde V\otimes L\pfd\tilde W$.
Since $\rk\, \tilde\b$ and  $\rk\,\tilde\g$ are bounded above by $q$, the Milnor-Wood
inequality follows from 

\begin{prop}\label{prop:MW} We have
$\ds -\rk\, \tilde\g\,\,\frac{\dego L^\vee}2\leq \dego \tilde W\leq \rk\, \tilde\b\,\,\frac{\dego L^\vee}2.$
\end{prop}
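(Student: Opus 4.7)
The plan is to apply the semistability along the leaves (part~(\ref{semistab}) of Proposition~\ref{dec}) to two carefully chosen leafwise Higgs subsheaves of $\tilde E$, mirroring the standard Higgs-bundle proof of the Milnor--Wood inequality in the surface-group case, with the role of the canonical bundle of the surface played by the dual $L^\vee$ of the tangent bundle to the foliation $\T$.

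To prove the upper bound $\dego\tilde W\leq\rk\,\tilde\beta\cdot\dego L^\vee/2$, I will first construct two leafwise Higgs subsheaves of $\tilde E$. Let $\tilde\beta^\flat:\tilde W\pfd\tilde V\otimes L^\vee$ denote the adjoint of $\tilde\beta:\tilde W\otimes L\pfd\tilde V$, and set $N:=\ker\tilde\beta^\flat\subset\tilde W$. Since $L$ is a line bundle, $\ker\tilde\beta=N\otimes L$ as subsheaves of $\tilde W\otimes L$, and the image $\text{im}\,\tilde\beta\subset\tilde V$ is canonically isomorphic to $(\tilde W/N)\otimes L$. The subsheaf $N\subset\tilde E$ is leafwise Higgs because $\tilde\beta$ vanishes on $N\otimes L$ by construction, while $\tilde\gamma$ acts on $\tilde V\otimes L$ and not on $N\otimes L\subset\tilde W\otimes L$. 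Similarly, $F:=\text{im}\,\tilde\beta\oplus\tilde W\subset\tilde V\oplus\tilde W=\tilde E$ is leafwise Higgs because $\tilde\beta(\tilde W\otimes L)=\text{im}\,\tilde\beta\subset F$ and $\tilde\gamma(\text{im}\,\tilde\beta\otimes L)\subset\tilde W\subset F$.

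Proposition~\ref{dec}(\ref{semistab}) then yields $\dego N\leq 0$ and $\dego\,\text{im}\,\tilde\beta+\dego\tilde W\leq 0$. Combining the first with the short exact sequence $0\pfd N\pfd\tilde W\pfd\tilde W/N\pfd 0$ and the isomorphism $(\tilde W/N)\otimes L\simeq\text{im}\,\tilde\beta$ (which gives $\dego\,\text{im}\,\tilde\beta=\dego(\tilde W/N)+\rk\,\tilde\beta\cdot\dego L$), I obtain
$$\dego\tilde W=\dego N+\dego\,\text{im}\,\tilde\beta+\rk\,\tilde\beta\cdot\dego L^\vee\leq\dego\,\text{im}\,\tilde\beta+\rk\,\tilde\beta\cdot\dego L^\vee.$$
Inserting the second inequality $\dego\,\text{im}\,\tilde\beta\leq-\dego\tilde W$ gives $2\dego\tilde W\leq\rk\,\tilde\beta\cdot\dego L^\vee$, which is the desired upper bound.

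For the lower bound, I will run the same argument with the roles of $(\tilde W,\tilde V,\tilde\beta)$ and $(\tilde V,\tilde W,\tilde\gamma)$ interchanged, producing $\dego\tilde V\leq\rk\,\tilde\gamma\cdot\dego L^\vee/2$. Since $\tilde E=\pi^\star E$ is pulled back from a flat bundle on $X$, Proposition~\ref{prop:transverse} gives $\dego\tilde E=\deg E=0$, hence $\dego\tilde V=-\dego\tilde W$, and the inequality translates into $\dego\tilde W\geq-\rk\,\tilde\gamma\cdot\dego L^\vee/2$. I do not anticipate a serious obstacle: the only points needing verification are the sheaf-theoretic identifications above and the leafwise Higgs invariance of $N$ and $F$, all of which follow directly from $L$ being a line bundle and from the Higgs field being off-diagonal with respect to the splitting $\tilde E=\tilde V\oplus\tilde W$.
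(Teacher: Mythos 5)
Your proof is correct and follows essentially the same route as the paper's: the two leafwise Higgs subsheaves you exhibit ($N$ and $\mathrm{im}\,\tilde\beta\oplus\tilde W$) are, up to a twist by $L$ and up to saturation, exactly the subsheaves $\cal N\otimes L^\vee$ and $\tilde W\oplus\cal I$ used in the paper, and the degree bookkeeping is identical. The only cosmetic difference is that you apply leafwise semistability to the image sheaf directly rather than to its saturation, which is legitimate since Proposition~\ref{dec}(\ref{semistab}) holds for arbitrary leafwise Higgs subsheaves and spares you the comparison $\dego\cal I\geq\dego\Im\tilde\b$.
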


\begin{proof} 
We first prove that $\dego \tilde W\leq \rk\, \tilde\b\,\,\frac{\dego L^\vee}2$. Consider $\tilde\b:
\tilde W\otimes L\pfd\tilde V$. 

If $\tilde \b=0$ then $\tilde W$ is a leafwise Higgs subsheaf of $(\tilde E,\tilde\t)$ hence by semistability
along the leaves of $\T$, see Proposition~\ref{dec},
$\dego\tilde W\leq 0$ and we are done. 

Assume therefore that $\tilde\b\neq 0$. Let $\cal N=\Ker\tilde\beta\subset \tilde W\otimes L$ and 
$\cal I$ be the saturation (as a sheaf) of $\Im\tilde\beta\subset\tilde V$. By construction, 
$\cal N\otimes L^\vee$ and $\tilde W\oplus \cal I$ are  leafwise Higgs subsheaves of $(\tilde
E,\tilde \t)$ and, again by leafwise semistability, 
$$\dego (\cal N\otimes L^\vee)\leq 0\ \ {\rm and}\ \ \dego\tilde W+\dego \cal I\leq 0.
$$
Moreover, we have $\dego(W\otimes L)=\dego\cal N+\dego \Im\tilde\b\leq\dego\cal N+\dego\cal I$
since $\dego\cal I\geq\dego\Im\tilde\b$, see the proof of Proposition~\ref{dec}~(\ref{semistab}). Because the generic rank of
${\tilde\beta}$ is the rank of $\cal I$ and of $(\tilde W\otimes L)/\cal N$, we thus have 
$$\dego\tilde W + q\,\dego L\leq \dego \cal N+\dego \cal I\leq (q-\rk\,\tilde\b)\,\dego L -\dego\tilde W
$$
and hence $\ds\dego\tilde W\leq  \rk\, \tilde\b\,\,\frac{\dego L^\vee}2$.

In the same way, using $\tilde \g:\tilde V\otimes L\pfd\tilde W$, we obtain
$\ds -\dego\tilde W=\dego\tilde V\leq \rk\, \tilde\g\,\,\frac{\dego L^\vee}2$.
\end{proof}

\begin{rema} 
This proposition holds more generally in the setting of \textsection \ref{higgsfol}, i.e. for a
  harmonic $\SU(p,q)$-Higgs bundle over a compact K\"ahler manifold $Y$ with a smooth foliation $\T$ by
  holomorphic curves and 
an invariant transverse volume form, so that Proposition~\ref{dec}
  applies. Note that in this case $\tilde\beta$ and $\tilde\gamma$ have to be understood
    as the restriction of the components $\beta$ and $\gamma$ of the Higgs field to the leaves. 
  
  We remark that by the proof of the proposition, if the foliation $\T$ is such that $\degO
  L^\vee\leq0$,  then necessarily $\tilde\beta$ and $\tilde\gamma$ vanish
  identically (i.e the harmonic map $f:\tilde Y\pfd\Y_{p,q}$ is constant along the lifted leaves
  on $\tilde Y$) and $\degO\tilde W=0$. Observe however that if $\degO L^\vee<0$, it follows from a
  result of Bogomolov and McQuillan~\cite{BMQ} and from the stability for holomorphic foliations,
  see~\cite{Per} for instance, that the foliation is a fibration whose fibers are rational curves
  (at least if $M$ is projective). But then, the harmonic map is constant in the fibers of the
  fibration (by pluriharmonicity and the fact that a harmonic map on $\P^1_\C$ with values in a non
  positively curved manifold is constant) so that the result was known a priori.  
\end{rema}

\begin{rema}\label{closedleaves3} 
{\em Closed leaves III: Milnor-Wood inequality.} In a similar spirit, the convergence of currents
alluded to in Remark~\ref{closedleaves2} allows to give another proof of the Milnor-Wood inequality
on compact 
complex hyperbolic manifolds given by arithmetic lattices of type I by
deducing it from the inequality on the (infinitely many) totally geodesic curves they
  contain. It seems however difficult to build on this  
idea to infer the rigidity of maximal representations in this special case, while the more
general approach presented here will prove more fruitful. 
\end{rema}

\subsubsection{The singular locus of the Higgs field}\label{sec:sat}\hfill

\smallskip

We now study the equality case in Proposition~\ref{prop:MW}. Using the weak polystability of
  the Higgs bundle along the leaves of the tautological foliation $\T$ and the results of
  \textsection \ref{sec:closure} on the dynamics of $\T$, we
  show that if equality holds, then a component of the Higgs field, $\b$ or $\g$, is regular (in a
  sense to be defined below) on an 
  everywhere dense subset of $X$. 

\medskip

Say that a point $\xi\in\PTX$ is a {\em $\tilde\b$-regular} point, or that $\tilde\b$ is regular at
  $\xi$, if the rank of $\tilde \b_\xi:\tilde
W_\xi\otimes L_\xi\pfd\tilde V_\xi$ is the generic rank of $\tilde \b:\tilde W\otimes L\pfd\tilde
V$. Say that a point $x\in X$ is a {\em $\b$-regular}  point, or that $\b$ is regular at $x$, if the
fiber of $\PTX$ above $x$ consists only of $\tilde\b$-regular points. Points in $X$,
resp. $\PTX$, which are not
$\b$-regular, resp. $\tilde\b$-regular, are {\em $\b$-singular}, resp. {\em $\tilde\b$-singular}.  

Define accordingly the {\em singular locus} $\S_{\tilde \b}$ of $\tilde\b:\tilde
  W\otimes L\pfd\tilde V$ as the subset of $\tilde \b$-singular points in $\PTX$: 
$$
\S_{\tilde\b}=\{\xi\in\PTX\,\mid\,\rk\,(\tilde\b_\xi)<\rk\,\tilde\b\}
$$   
and the {\em singular locus} $\S_\b$ of $\b:W\otimes T_X\pfd V$ as the subset of $\b$-singular
points in $X$. Note that $\S_\b$ is by definition the projection of $\S_{\tilde\b}$ to $X$:
$$
\S_\b=\pi(\S_{\tilde\b})=\{x\in X\,\mid\,\exists\,\xi\in T_{X,x},\,\xi\neq 0,\mbox{ such that }\rk\,\b_x(\xi)<\rk\,\tilde\b\}.
$$  

One defines similarly $\tilde \g$- and $\g$- regular and singular points as well as the singular loci
  $\S_{\tilde\g}$ and $\S_\g$. 

\smallskip

Observe that while $\S_{\tilde\b}$ and $\S_{\tilde\g}$ are proper analytic subsets of $\PTX$,
$\S_\b$ and $\S_\g$ might well be the
whole $X$.

\medskip

\begin{lemma}\label{lem:sat}
If $\dego\tilde W=\rk\,\tilde\b\,\,\frac{\dego L^\vee}2$, the singular locus $\S_{\tilde\b}$ of
$\tilde\b:\tilde W\otimes L\pfd\tilde V$ is a proper
$\T$-saturated subset of $\PTX$.  If
  $\dego\tilde W=- \rk\,\tilde\g\,\,\frac{\dego L^\vee}2$, then the singular locus $\S_{\tilde \g}$ 
  of $\tilde\g:\tilde V\otimes L\pfd\tilde W$ is a proper $\T$-saturated subset of $\PTX$.
\end{lemma}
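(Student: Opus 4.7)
The plan is to turn equality in the Milnor-Wood bound into saturation properties strong enough to combine the weak polystability of Proposition~\ref{dec}~(\ref{polystab}) with the semi-positivity of $\Omegag$. Set $r = \rk\,\tilde\b$, $\mathcal N = \Ker\tilde\b \subset \tilde W\otimes L$, $\mathcal G = \mathcal N\otimes L^\vee \subset \tilde W$, and let $\mathcal I\subset\tilde V$ be the saturation of $\Im\tilde\b$. Tracing the chain of inequalities in the proof of Proposition~\ref{prop:MW}, the equality $\dego\tilde W = r\,\dego L^\vee/2$ forces simultaneously (i) $\dego\mathcal I = \dego\Im\tilde\b$, (ii) $\dego\mathcal G = 0$, and (iii) $\dego(\tilde W\oplus\mathcal I) = 0$. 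Hence both $\mathcal G$ and $\mathcal F := \tilde W\oplus\mathcal I$ are saturated leafwise Higgs subsheaves of $(\tilde E,\tilde\t)$ of foliated degree zero.

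Applying Proposition~\ref{dec}~(\ref{polystab}) to each, $\S(\mathcal G)$ and $\S(\mathcal F)$ are $\T$-saturated subsets of $\PTX$ of codimension $\geq 2$, and on their complements one obtains orthogonal decompositions $\tilde W = G\oplus G^\perp$ (with $\rk G = q-r$) and $\tilde V = I\oplus I^\perp$ (with $\rk I = r$) that are holomorphic along the leaves of $\T$, satisfying $\tilde\b(G\otimes L)=0$ and $\Im\tilde\b\subset I$. A key observation is that $\S(\mathcal G),\,\S(\mathcal F)\subset \S_{\tilde\b}$: at a $\tilde\b$-regular point $\mathcal N$ is a subbundle of $\tilde W\otimes L$ (so $\tilde W/\mathcal G$ is locally free) and $\mathcal I = \Im\tilde\b$ is a subbundle of $\tilde V$ (so $\tilde V/\mathcal I$ is locally free). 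Set $C = \S(\mathcal G)\cup\S(\mathcal F)$ and $U = \PTX\setminus C$. On $U$, the restriction $\tilde\b_{|G^\perp\otimes L}\colon G^\perp\otimes L\to I$ is a morphism between rank-$r$ bundles, and its determinant $\Delta$ is a section of the line bundle $\mathcal D = (\det G^\perp)^\vee\otimes(L^\vee)^r\otimes\det I$. Using $\det G^\perp = \det\tilde W\otimes(\det\mathcal G)^{-1}$ and $\det I = \det\mathcal I$, both globally defined reflexive rank-one sheaves on $\PTX$, the bundle $\mathcal D$ extends to $\PTX$, and $\Delta$ extends by Hartogs since $C$ has codimension $\geq 2$. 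A direct calculation from (i)--(iii) and the equality hypothesis gives $\dego\mathcal D = -2\,\dego\tilde W + r\,\dego L^\vee = 0$.

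The zero divisor $D'$ of the extended section $\Delta$ on $\PTX$ is non-trivial (the generic rank of $\tilde\b$ equals $r$), proper, effective, and satisfies $\int_{D'}\Omegag = \dego[D'] = 0$. Because $\Omegag$ is closed, semi-positive, of rank $4n-4$ in the real dimension $4n-2$ of $\PTX$, with pointwise kernel exactly the tangent line to $\T$ (Lemma~\ref{transversesymplectic}), this vanishing forces $\Omegag|_{D'}$ to vanish at every smooth point of $D'$, equivalently the tangent space of $D'$ contains the leaf direction everywhere it is smooth. Since $L$ is a one-dimensional integrable distribution, an ODE/Frobenius argument then promotes this to $D'$ being a union of leaves of $\T$. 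On $U$, the $\tilde\b$-singular locus coincides with the zero locus of $\Delta$, so $\S_{\tilde\b} = (D'\cap U)\cup C$; since every leaf of $\T$ lies either in $C$ (which is $\T$-saturated, and is contained in $\S_{\tilde\b}$) or entirely in $U$ (where it either lies in $D'$ or avoids it, by $\T$-saturation of $D'$), one concludes $\S_{\tilde\b}$ is $\T$-saturated. Properness is immediate from the definition of the generic rank, and the statement for $\S_{\tilde\g}$ follows symmetrically by considering $\Ker\tilde\g$ and the saturation of $\Im\tilde\g$ in $\tilde W$. The main obstacle is the last step: converting the numerical identity $\int_{D'}\Omegag = 0$ into the geometric $\T$-saturation of $D'$, which relies crucially on the precise rank $4n-4$ of $\Omegag$ and the identification of its kernel with the foliation direction.
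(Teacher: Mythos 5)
Your proposal is correct and follows essentially the same route as the paper: equality in Proposition~\ref{prop:MW} forces $\mathcal N\otimes L^\vee$ and $\tilde W\oplus\mathcal I$ to have foliated degree zero, weak polystability (Proposition~\ref{dec}) handles the codimension~$\geq 2$ part of $\S_{\tilde\b}$, and the divisorial part is killed by the vanishing of its foliated degree together with the semi-positivity of $\Omegag$ (whose kernel is the leaf direction) and the smoothness of $\T$. The only cosmetic difference is that you realize the divisor as the zero locus of the determinant of the induced rank-$r$ map $G^\perp\otimes L\to I$ extended by Hartogs, whereas the paper reads it off from the relation $\det\mathcal I\simeq\det(\Im\tilde\b)\otimes[\Delta]$ --- the degree computations are identical (and your parenthetical claim that $D'$ is non-trivial is unjustified but harmless, since an empty divisor only makes the conclusion easier).
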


\begin{proof} We prove the assertion on $\tilde\b$. Call $r$ the generic rank of $\tilde
    \b$. Let $\cal N$ and $\cal I$ be  
respectively the kernel sheaf and the saturation of the image sheaf of 
$\tilde\beta:\tilde W\otimes L\rightarrow \tilde V$ and let $\S(\cal N)$ and $\S(\cal I)$ be their
singular loci (as defined just before Proposition~\ref{dec}). Observe that by definition, outside
of $\S_{\tilde \b}$, the rank of $\tilde\b_\xi$ is 
constant equal to $r$, so that $\cal N$, resp. $\cal I$, is the sheaf of sections of a subbundle of
$\tilde W\otimes L$, resp. $\tilde V$. This implies that $\S(\cal N)$ and $\S(\cal I)$ are included
in $\S_{\tilde\b}$.   

By the proof of Proposition~\ref{prop:MW},
if $\dego\tilde W=r\,\frac{\dego L^\vee}2$ then the foliated degrees of $\cal N\otimes L^\vee$ and 
$\cal I\oplus\tilde W$, which are leafwise Higgs subsheaves of 
$\tilde E$, vanish. Since $\tilde W/(\cal N\otimes L^\vee)$ and $\tilde V/\cal I$ are torsion
  free, by the weak polystability property of $(\tilde E,\tilde\t)$ proved in
Proposition~\ref{dec}~(\ref{sat}), $\S(\cal N)$ and $\S(\cal I)$ are both $\T$-saturated. Moreover there exist a
rank $q-r$ holomorphic subbundle $N$ 
of $\tilde W$, a rank $r$ holomorphic subbundle $I$ of $\tilde V$, both defined outside of 
the codimension at least 2 subset $\singsat:=\S(\cal N)\cup\S(\cal I)$ of $\PTX$, 
such that on $\PTX\backslash \singsat$, $\cal N\otimes L^\vee$ and $\cal I$ are the
sheaves of sections of $N$ and $I$. 

Since, outside of $\singsat$, $\tilde\beta$ maps $(\tilde W/N)\otimes L$ to $I$ and $\rk\, I=r=\rk\,
(\tilde W/N)$, 
the set of points $\xi\in\PTX\backslash\singsat$ where $\tilde\beta_\xi$ is not of rank $r$ is locally
given by the 
vanishing of a single holomorphic function and hence has codimension 1 if not empty. This means that
the components of 
$\S_{\tilde \b}$ of higher codimension  
are included in $\singsat$ and hence that $\tilde \beta:\tilde W\otimes L\pfd\tilde V$ has rank $r$, as
a vector bundle map, outside $\singsat\cup |\Delta|$, where $|\Delta|$ is the (possibly empty) 
divisorial part of $\S_{\tilde\b}$, i.e. the union of its irreducible components $\Delta_j$ of codimension
1. Thus $\S_{\tilde\b}$ is included in $\singsat\cup |\Delta|$, so that in fact, by our
  first observation, $\S_{\tilde\b}=\singsat\cup |\Delta|=\S(\cal N)\cup\S(\cal I)\cup|\Delta|$.

\smallskip

By an argument similar to~\cite{K}*{Chap.~V (8.5) p. 180},
there is a line bundle $[\Delta]$ on $\PTX$
corresponding to a divisor $\Delta=\sum_j a_j \Delta_j$ whose support is $|\Delta|$ (i.e. $a_j\geq
1$ for all $j$) such that $\det\cal I\simeq \det (\Im\tilde\b)\otimes[\Delta]$ on $\PTX$. Again by
the proof of Proposition~\ref{prop:MW}, $\dego\tilde W=r\,\frac{\dego L^\vee}2$ implies $\dego
\Im\tilde\b=\dego\cal I$, thus 
$\dego [\Delta]=\sum_ja_j\,\int_{\Delta_j}\Omegag=0$. This  
means that for all $j$, and at each smooth point $x$ of $\Delta_j$, the leaf $\L_x$ of $\T$ through
  $x$ is tangent to $\Delta_j$.
As the foliation is smooth, $\L_x$ must be contained in $\Delta_j$.
Now in $\Delta_j$, the smooth points are dense and the set of points whose leaves 
stay in $\Delta_j$ is closed, for it is analytic 
as explained in  the proof of (\ref{polystab}) in Proposition~\ref{dec}.  Thus $\Delta_j$ is
$\T$-saturated for all $j$.  
\end{proof}

\begin{rema}
As it is clear from its proof, Lemma~\ref{lem:sat} also holds more generally in the setting of
\textsection \ref{higgsfol} if Proposition~\ref{dec} applies. 
\end{rema}

\medskip

If we now consider the singular locus of $\b$ or $\g$ in $X$, 
Proposition~\ref{prop:dyn} implies immediately:

\begin{coro}\label{cor:small}
If $\dego\tilde W=\rk\,\tilde\b\,\,\frac{\dego L^\vee}2$, the singularity set $\S_\b$
of $\b:W\otimes T_X\pfd V$ is a proper analytic subset of $X$. If $\dego\tilde W=-
\rk\,\tilde\g\,\,\frac{\dego L^\vee}2$ then the singular locus $\S_\g$ 
  of $\g:V\otimes T_X\pfd W$ is a proper analytic subset of $X$.
\end{coro}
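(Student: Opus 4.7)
The plan is to assemble Corollary~\ref{cor:small} directly from the three tools already in hand: the definition of $\S_{\tilde\b}$ as a degeneracy locus, the saturation statement of Lemma~\ref{lem:sat}, and the dynamical result Proposition~\ref{prop:dyn}. The argument should be short, since essentially all the work is already done.

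First I would observe that $\S_{\tilde\b}$ is a proper analytic subset of $\PTX$: it is the vanishing locus of the minors of size $\rk\,\tilde\b$ of the sheaf morphism $\tilde\b\colon\tilde W\otimes L\fd \tilde V$, and it is proper precisely because $\rk\,\tilde\b$ is the \emph{generic} rank, so the complement is nonempty. In particular $\S_{\tilde\b}$ is closed. Under the hypothesis $\dego\tilde W=\rk\,\tilde\b\,\,\frac{\dego L^\vee}{2}$, Lemma~\ref{lem:sat} moreover tells us that $\S_{\tilde\b}$ is $\T$-saturated.

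Next I would pass to $X$. Since $\pi\colon\PTX\pfd X$ is a proper holomorphic map (the fibers are compact projective spaces $\CP^{n-1}$), Remmert's proper mapping theorem yields that the image $\pi(\S_{\tilde\b})$ is an analytic subset of $X$. By the very definition $\S_\b=\pi(\S_{\tilde\b})$, so $\S_\b$ is analytic. On the other hand, applying Proposition~\ref{prop:dyn} to the closed $\T$-saturated proper subset $\S_{\tilde\b}\subsetneq\PTX$ produces that $\pi(\S_{\tilde\b})=\S_\b$ is a proper subset of $X$. Combining these two facts gives exactly the claim: $\S_\b$ is a proper analytic subset of $X$.

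The second half of the corollary, about $\S_\g$ under the hypothesis $\dego\tilde W=-\rk\,\tilde\g\,\,\frac{\dego L^\vee}{2}$, follows by repeating the same argument with $\tilde\g\colon\tilde V\otimes L\fd \tilde W$ in place of $\tilde\b$, using the corresponding half of Lemma~\ref{lem:sat}. There is no real obstacle in this proof; the only small point to be careful about is to invoke the proper mapping theorem to upgrade ``proper subset'' (provided by Proposition~\ref{prop:dyn}, which rests on Ratner's theorem) to ``proper \emph{analytic} subset'', since Proposition~\ref{prop:dyn} by itself only asserts set-theoretic non-surjectivity.
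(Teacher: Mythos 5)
Your proof is correct and follows essentially the same route as the paper: the paper likewise combines the $\T$-saturation of $\S_{\tilde\b}$ from Lemma~\ref{lem:sat} with Proposition~\ref{prop:dyn} to get properness of $\S_\b=\pi(\S_{\tilde\b})$, and then uses that $\S_{\tilde\b}$ is analytic and $\pi$ is proper to conclude that $\S_\b$ is analytic. Your extra care in noting that Proposition~\ref{prop:dyn} only gives set-theoretic non-surjectivity, with Remmert supplying analyticity, matches the paper's (more tersely stated) argument exactly.
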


\begin{proof}
  We prove the assertion on $\b$.  Since $\S_{\tilde\b}$ is a proper closed subset of $\PTX$
  and is $\T$-saturated by Lemma~\ref{lem:sat}, 
 Proposition~\ref{prop:dyn} implies 
 that $\S_\b=\pi(\S_{\tilde\b})$ is a proper subset of $X$. Now $\S_{\tilde \b}$ is an analytic subset and
 $\pi$ a proper map, so
 $\S_\b$ is also an analytic subset of $X$.
\end{proof}

\subsubsection{Conclusion}\label{sec:concl}\hfill

\smallskip

We are now in position to conclude the proof of Theorem~\ref{thm:maxreduc}. So we assume that
  the reductive  
representation $\r$ is maximal. We want to prove that the $\r$-equivariant harmonic map $f$ is
  holomorphic or antiholomorphic, i.e. that one of the components of the Higgs field it defines
  vanishes. By the previous paragraph, we already know that one component is regular outside a
  proper analytic
  subset of $X$. The idea is that if $n\geq 2$, the 
  integrability property $[\t,\t]=0$ of the Higgs field forces the other component to
  vanish outside of this subset, hence everywhere.

\medskip 

Suppose that $\tau(\rho)>0$, so that
$\dego\tilde W=q\,\frac{\dego L^\vee}2$. We know from Proposition~\ref{prop:MW} that
$\rk\,\tilde\b=q$, hence from~\textsection \ref{sec:sat} that the set of $\b$-regular points 
$$
X\backslash\S_\b=\{x\in X\,\mid\, \b_x(\xi):W_x\fd V_x \mbox{ is injective for all }\xi\neq
0\mbox{ in }T_{X,x} \}
$$
is everywhere dense in $X$. 

\medskip

Let us fix a $\b$-regular point $x\in X$, i.e. $x\notin\S_\b$. For $\xi\neq 0$ in $T_{X,x}$, call
$I_\xi\subset V_x$ the 
image of $\beta_x(\xi):W_x\pfd V_x$ (which is injective), and $I_\xi^\perp$ its
orthogonal complement in $V_x$ w.r.t. the harmonic metric. Observe that $I_\xi^\perp$ is also the
  orthogonal complement of $I_\xi\oplus W_x$ in $E_x=V_x\oplus W_x$, because $V_x$ and $W_x$ are orthogonal for
  the harmonic metric.

\smallskip

Using the integrability property of the Higgs field and again the weak polystability along the
leaves, we first prove

\begin{lemma}\label{lem:vanish}
For all $\eta$ and all $\xi\neq 0$ in $T_{X,x}$, $\g_x(\eta)$ vanishes on $I_\xi^\perp$.     
\end{lemma}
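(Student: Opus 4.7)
The strategy is to combine the weak polystability along the leaves of the tautological foliation $\T$, applied to a carefully chosen leafwise Higgs subsheaf of $(\tilde E,\tilde\t)$ on $\PTX$, with the integrability condition $[\t,\t]=0$ for the Higgs field.

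First I identify the relevant subsheaf. Let $\cal I$ be the saturation in $\tilde V$ of $\Im\tilde\b$ and set $\cal F:=\tilde W\oplus\cal I\subset\tilde E$. The quotient $\tilde E/\cal F=\tilde V/\cal I$ is torsion-free, so $\cal F$ is saturated; and since $\tilde\b(\tilde W\otimes L)\subset\cal I$ while $\tilde\g(\cal I\otimes L)\subset\tilde W$, $\cal F$ is a leafwise Higgs subsheaf. Under the standing assumption $\tau(\r)>0$, the equality case in the proof of Proposition~\ref{prop:MW} forces $\dego\cal F=\dego\tilde W+\dego\cal I=0$, so Proposition~\ref{dec}(\ref{polystab}) applies to $\cal F$. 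Away from $\S_{\tilde\b}$, the map $\tilde\b$ has constant rank $q$ and $\cal I$ coincides with $\Im\tilde\b$, so $\S(\cal F)\subset\S_{\tilde\b}$. Writing $F=\tilde W\oplus I$ for the corresponding subbundle on $\PTX\setminus\S(\cal F)$, the harmonic-metric orthogonality $\tilde V\perp\tilde W$ gives $F^\perp=I^\perp\subset\tilde V$. The inclusion $\tilde\t(F^\perp\otimes L)\subset F^\perp$ from Proposition~\ref{dec}(\ref{polystab})(\ref{split}) combined with the fact that $\tilde\t|_{\tilde V\otimes L}=\tilde\g$ takes values in $\tilde W$ forces $\tilde\g(I^\perp\otimes L)\subset I^\perp\cap\tilde W=0$.

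I then translate back to $X$ at the $\b$-regular point $x$. The whole fiber $\pi^{-1}(x)$ avoids $\S_{\tilde\b}\supset\S(\cal F)$, and for $\xi\neq 0$ the fiber $\cal I_{(x,[\xi])}$ is exactly $I_\xi=\Im\b_x(\xi)$, so $I^\perp_{(x,[\xi])}=I_\xi^\perp$. Since $L_{(x,[\xi])}=\langle\xi\rangle$, the previous vanishing reads $\g_x(\xi)\,v=0$ for every $v\in I_\xi^\perp$. To replace $\g_x(\xi)$ by $\g_x(\eta)$, I invoke the integrability condition: the component of $[\t,\t]=0$ acting on $V$ reads $\b_x(\xi)\g_x(\eta)=\b_x(\eta)\g_x(\xi)$ as endomorphisms of $V_x$, for all $\xi,\eta\in T_{X,x}$. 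Applied to $v\in I_\xi^\perp$ this gives $\b_x(\xi)(\g_x(\eta)\,v)=\b_x(\eta)(\g_x(\xi)\,v)=0$; the $\b$-regularity of $x$ makes $\b_x(\xi):W_x\to V_x$ injective, whence $\g_x(\eta)\,v=0$, as claimed.

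The main delicate step is the extraction of the pointwise vanishing $\tilde\g(I^\perp\otimes L)=0$ from Proposition~\ref{dec}(\ref{polystab})(\ref{split}): one must simultaneously check that the singular locus $\S(\cal F)$ is contained in $\S_{\tilde\b}$ (so that fibers above $\b$-regular points fall in the locus where the weak polystability conclusion applies) and exploit the strictly off-diagonal shape of $\tilde\t$ with respect to $\tilde V\oplus\tilde W$, which is what promotes the mere preservation of $F^\perp$ under $\tilde\t(\cdot\otimes L)$ to an actual vanishing; with that in hand the integrability identity does the rest essentially formally.
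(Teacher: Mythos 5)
Your proof is correct and follows essentially the same route as the paper's: apply the weak polystability of Proposition~\ref{dec} to the saturated leafwise Higgs subsheaf $\tilde W\oplus\cal I$ of foliated degree zero, deduce that $I_\xi^\perp$ is $\t_x(\xi)$-invariant hence killed by $\g_x(\xi)$ (since $\g$ maps $V$ into $W$), and then propagate the vanishing to all $\eta$ via the integrability identity $\b_x(\xi)\circ\g_x(\eta)=\b_x(\eta)\circ\g_x(\xi)$ and the injectivity of $\b_x(\xi)$. The only difference is that you spell out the verification that $\S(\cal F)\subset\S_{\tilde\b}$ and that $F^\perp=I_\xi^\perp$ via the orthogonality of $V_x$ and $W_x$, points the paper records just before the lemma.
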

\begin{proof}
Since $\dego\tilde W=q\,\frac{\dego L^\vee}2$, we know from the proof of
  Proposition~\ref{prop:MW} that the kernel sheaf $\cal N$ of $\tilde\b:\tilde W\otimes L\pfd\tilde
  V$ is zero and that its image 
  sheaf $\cal I$ satisfies $\dego (\tilde W\oplus\cal
  I)=0$. By weak polystability along the leaves, see
  Proposition~\ref{dec}, outside 
  the singular locus $\S_{\tilde\b}$ of $\tilde\b$, there is a subbundle $I$ of $\tilde V$ such
  that $\cal I$ is the sheaf of sections of $I$ and $(\tilde E,\t)=(\tilde W\oplus I,\t_{|W\oplus I})\oplus(I^\perp,\t_{|I^\perp})$ is a Higgs bundle
  decomposition along the leaves of $\T$, where $I^\perp$ is the orthogonal complement in $\tilde E$
  of $\tilde W\oplus I$ w.r.t. the lifted harmonic metric on $\tilde E$. Since $x\notin\S_\b$, this
  means that for all $\xi\neq 0$ 
  in $T_{X,x}$, $W_x\oplus I_\xi$ and $I_\xi^\perp$ are invariant by $\t_x(\xi)$.    

But $I_\xi^\perp\subset V_x$ and $\t_x(\xi)_{|V_x}=\g_x(\xi)$ maps $V_x$ to $W_x$.
Hence $I_\xi^\perp\subset \Ker\g_x(\xi)$.

On the other hand, the integrability property $[\t,\t]=0$ of the Higgs field means that
$\b_x(\xi)\circ\g_x(\eta)=\b_x(\eta)\circ\g_x(\xi)$ for all $\xi,\eta\in T_{X,x}$. Therefore, if
$v\in\Ker\g_x(\xi)$ for some $\xi\neq 0$, then for all $\eta$ we have 
$\b_x(\xi)(\g_x(\eta)v)=\b_x(\eta)(\g_x(\xi)v)=0$ and hence $\g_x(\eta)v=0$ since $\b_x(\xi)$ 
is injective. Hence for all $\xi\neq 0$ and all $\eta$, $\Ker\g_x(\xi)\subset\Ker\g_x(\eta)$. 
\end{proof}

The next lemma shows that the subspaces $I_\xi^\perp$ for $\xi\neq 0$ generate $V_x$. This is
  the only point in the proof for which the assumption that $n\geq 2$ is required.

\begin{lemma}\label{lem:intersect}
Assume that $n\geq 2$. Then $\cap_{\xi\neq 0}I_\xi=\{0\}$.    
 \end{lemma}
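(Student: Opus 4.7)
The plan is to argue by contradiction: suppose there is a nonzero vector $v\in\bigcap_{\xi\neq 0}I_\xi$. Since $x\in X\backslash\S_\b$, for every $\xi\neq 0$ in $T_{X,x}$ the map $\b_x(\xi):W_x\fd V_x$ is injective, and hence there is a unique $w_\xi\in W_x$ with $\b_x(\xi)(w_\xi)=v$. This defines a map
$$
\psi:T_{X,x}\backslash\{0\}\fd W_x,\qquad \psi(\xi):=w_\xi,
$$
from which the contradiction will be extracted using two elementary properties of $\psi$ combined with the hypothesis $n\geq 2$.

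First I would check that $\psi$ is holomorphic on $T_{X,x}\backslash\{0\}$. Near a fixed $\xi_0\neq 0$, pick a linear complement of $I_{\xi_0}$ in $V_x$ and denote by $\varpi:V_x\fd I_{\xi_0}$ the associated projection. Then $\varpi\circ\b_x(\xi):W_x\fd I_{\xi_0}$ is a small holomorphic perturbation of the isomorphism $\b_x(\xi_0):W_x\fd I_{\xi_0}$, hence is invertible for $\xi$ close to $\xi_0$, and (using that $v\in I_\xi$ by assumption) the equation $\b_x(\xi)(w)=v$ has the unique holomorphic solution $w=(\varpi\circ\b_x(\xi))^{-1}(\varpi(v))$.

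Second, the bilinearity of $\b_x$ together with the uniqueness of $w_\xi$ forces $\psi(\lambda\xi)=\lambda^{-1}\psi(\xi)$ for every $\lambda\neq 0$, so $\psi$ is homogeneous of degree $-1$.

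The final and crucial step is where the assumption $n\geq 2$ enters, through Hartogs's extension theorem: because $T_{X,x}\backslash\{0\}$ has complex dimension $n\geq 2$, the map $\psi$ extends to a holomorphic map on all of $T_{X,x}$ and is therefore bounded in a neighbourhood of the origin. This boundedness is incompatible with the homogeneity relation $\psi(\lambda\xi)=\lambda^{-1}\psi(\xi)$ as $\lambda\to 0$ unless $\psi\equiv 0$; but then $v=\b_x(\xi)(\psi(\xi))=0$, contradicting $v\neq 0$. I expect the Hartogs step to be the only point where the hypothesis $n\geq 2$ is truly essential, in agreement with the comment made just before the lemma.
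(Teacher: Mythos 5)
Your proposal is correct and follows essentially the same route as the paper: define the (unique, by injectivity of $\b_x(\xi)$) preimage map $\xi\mapsto w_\xi$, check it is holomorphic on $T_{X,x}\backslash\{0\}$, extend it across the origin using $n\geq 2$ (Hartogs), and conclude $v=0$ from the linearity of $\xi\mapsto\b_x(\xi)$. The only cosmetic difference is at the end, where the paper evaluates the extension at $0$ and uses $\b_x(0)=0$ directly, while you derive the same conclusion from the degree $-1$ homogeneity; both are valid.
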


 \begin{proof}
Let indeed $v$ be in $\cap_{\xi\neq 0}I_\xi$. 
For all $\xi\neq 0$, there exists $\varphi(\xi)\in W_x$ such that
  $\b_x(\xi)\varphi(\xi)=v$. By the injectivity of $\b_x(\xi)$, $\varphi(\xi)$ is unique and
  $\varphi$ is a well-defined map from $T_{X,x}\backslash\{0\}$ to $W_x$. Since $\varphi$ is
  locally given by inverting a $q$-by-$q$ submatrix of $\b_x(\xi)$, it is holomorphic on
  $T_{X,x}\backslash\{0\}$.
Because 
$n\geq 2$, the map $\varphi$ can be extended holomorphically to $0\in T_{X,x}$ and necessarily
$\b_x(0)\varphi(0)=v$ so that 
$v=0$ since $\b_x:T_{X,x}\pfd\Hom(W_x,V_x)$ is linear. 
\end{proof}
 
Together these lemmas imply that for $n\geq 2$, $\g$ vanishes outside a proper analytic subset of $X$,
hence everywhere and the
$\rho$-equivariant harmonic map 
$f$ is holomorphic. 

\medskip

In the same manner, if $\tau(\r)<0$, then $\dego \tilde W=-\frac q2\,\dego L^\vee$, and if $n\geq 2$,
$\b$ vanishes
outside the singular locus $\S_\g$ of $\g$, hence
identically, and the 
$\rho$-equivariant harmonic map $f$ is antiholomorphic.

\subsection{Proof of Theorem~\ref{main} and Corollary~\ref{description} for reductive
  representations}\label{sec:end}\hfill
 
\smallskip

Recall from \textsection \ref{sec:Ypq} that the maximal value of the holomorphic sectional 
curvature of the $\SU(p,q)$-invariant metric $\o_{\Y_{p,q}}$ of $\Y_{p,q}$ is $-1/q$.  
The Ahlfors-Schwarz-Pick lemma~\cite{Ro80} therefore implies that if $f:\H\pfd\Y_{p,q}$ is
holomorphic, then $f^\star\o_{\Y_{p,q}}\leq q\,\o$. Moreover, this inequality is an equality only if
the induced holomorphic sectional curvature on the image of $f$ is everywhere maximal, i.e. equal to
$-1/q$, and we proved the following result in~\cite{KozMobRank2}*{\textsection 3.1}:  

\begin{prop}\label{special}
Let $f:\H\pfd\Y_{p,q}$ be a holomorphic map such that $f^\star\o_{\Y_{p,q}}= q\,\o$.  Then $p\geq
qn$ and up to the composition of $f$ by an isometry of  
$\Y_{p,q}$, $f$ is equal to the following holomorphic totally geodesic embedding:
\begin{equation*}
f_{\rm diag}:\H\ni z=\left(\begin{array}{c}z_1\\ z_2\\ \vdots \\ z_n
        \end{array}\right)\longmapsto
Z=
%=\left(\begin{array}{c}z_1 1_q\\ z_2 1_q\\ \vdots \\ z_n 1_q\\ 0_{p-nq,q}
%        \end{array}\right)
\left(\begin{array}{cccc}
z & 0 & \cdots & 0 \\
0 & z & & \vdots \\
\vdots & & \ddots & 0\\
0 & \cdots & 0 & z \\
0 & 0 & 0 & 0 \\
\end{array}\right)
\in\Y_{p,q}.\,%\tag{$\star$} 
\end{equation*}
The totally geodesic map $f_{\rm diag}$ is equivariant with respect to the standard diagonal
embedding $\r_{\rm diag}:\SU(n,1)\hookrightarrow \SU(n,1)^q \hookrightarrow \SU(nq,q)
\hookrightarrow\SU(p,q)$. The stabilizer of its image in $\Y_{p,q}$ is an almost-direct product of
$\r_{\rm diag}(\SU(n,1))$  with its centralizer $K$ in $\SU(p,q)$, which is compact and acts
trivially on $f_{\rm diag}(\H)$. 
\end{prop}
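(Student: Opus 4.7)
The plan is to combine two ingredients: a Gauss equation curvature-comparison argument forcing $f$ to be totally geodesic with extremal ambient curvature in every direction, and an algebraic analysis of the ``extremal-direction'' matrices in $M_{p,q}(\C)$, which pins $df$ down pointwise to that of $f_{\rm diag}$ up to an isometry of $\Y_{p,q}$.

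First, I would show that $f$ is a totally geodesic Kähler immersion whose image has ambient holomorphic sectional curvature identically $-1/q$ in every tangent direction. Since $f^\star\o_{\Y_{p,q}} = q\,\o$ is positive, $f$ is a Kähler immersion, and the induced metric on $f(\H)$ has constant holomorphic sectional curvature $-1/q$ (scaling $\o$ by $q$ divides its holomorphic curvature by $q$). The Gauss equation for a complex submanifold of a Kähler manifold reads, up to standard conventions,
\[
K^{f(\H)}(df_z(\xi)) = K^{\Y_{p,q}}(df_z(\xi)) - \|\sigma(df_z(\xi),df_z(\xi))\|^2/\|df_z(\xi)\|^4,
\]
where $\sigma$ is the second fundamental form. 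Combined with the bound $K^{\Y_{p,q}} \leq -1/q$ from \textsection \ref{sec:Ypq} and the intrinsic value $K^{f(\H)} = -1/q$, this forces $\sigma(df_z(\xi), df_z(\xi)) = 0$ for all $z,\xi$, and hence $\sigma \equiv 0$ by $\C$-polarization of the symmetric $\C$-bilinear Kähler second fundamental form; so $f$ is totally geodesic. It also imposes the pointwise extremal-curvature condition: by the explicit formula of \textsection \ref{sec:Ypq}, for every $\xi \neq 0$ the matrix $A(\xi) := df_z(\xi) \in M_{p,q}(\C)$ satisfies $A(\xi)^\star A(\xi) = c(\xi)\cdot 1_q$ for some $c(\xi) > 0$.

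Second, I would extract the rigid structure of $df_{z_0}$ at a fixed base point $z_0$. After post-composing $f$ with an isometry of $\Y_{p,q}$, assume $f(z_0) = \W_0$. Pick a basis $e_1, \ldots, e_n$ of $T_{z_0}\H$, set $A_i := df_{z_0}(e_i)$, and let $a_{i,\alpha} \in \C^p$ denote the $\alpha$-th column of $A_i$. Expanding $A(\xi)^\star A(\xi) = c(\xi)\cdot 1_q$ for $\xi = \sum \lambda_i e_i$ as a polynomial in the $\lambda_i$'s and $\bar\lambda_j$'s and comparing coefficients yields two families of relations: $\langle a_{i,\alpha}, a_{j,\beta}\rangle = 0$ whenever $\alpha \neq \beta$ (for all $i,j$), and $\langle a_{j,\alpha}, a_{i,\alpha}\rangle$ independent of $\alpha$. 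Since $f$ is an immersion, the vectors $(a_{i,\alpha})_i$ are linearly independent for each $\alpha$ (any nontrivial relation would give a vanishing column of $A(\xi)$ for some $\xi \neq 0$, contradicting $c(\xi)>0$). Thus the $n$-dimensional subspaces $V_\alpha := \mathrm{span}(a_{1,\alpha}, \ldots, a_{n,\alpha}) \subset \C^p$ are pairwise orthogonal, forcing $p \geq qn$, and they carry the same Gram matrix in the bases $(a_{i,\alpha})_i$. A unitary change of basis of $V_1 \oplus \cdots \oplus V_q$, extended by the identity on the orthogonal complement (an element of $\U(p)$ inside the stabilizer ${\rm S}(\U(p)\times\U(q))$ of $\W_0$ in $\SU(p,q)$), then brings $(A_i)_i$ exactly into the standard block-diagonal form of $df_{\rm diag}$ at the origin.

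Finally, both $f$ and $f_{\rm diag}$ being totally geodesic maps of $\H$ that agree to first order at $z_0$ after the above normalizations, they coincide by uniqueness of totally geodesic maps. The stabilizer description of $f_{\rm diag}(\H)$ in $\SU(p,q)$ is then a direct block-matrix computation: elements preserving the block-diagonal image decompose into the diagonal $\r_{\rm diag}(\SU(n,1))$ factor and its centralizer $K$, which acts on the orthogonal complement of $V_1 \oplus \cdots \oplus V_q$ in $\C^p$ and by simultaneous $\U(q)$-phases in the block pattern; $K$ is compact and acts trivially on $f_{\rm diag}(\H)$. I expect the main technical obstacle to lie in the second step: the scalar $c(\xi)$ is not known a priori but is itself the nonnegative Hermitian form on $T_{z_0}\H$ determined by $f$, and a careful identification of the correct coefficients in the $\lambda_i\bar\lambda_j$ expansion of $A(\xi)^\star A(\xi) - c(\xi)\cdot 1_q = 0$ is needed to separate the ``diagonal'' ($\alpha = \beta$) and ``off-diagonal'' ($\alpha \neq \beta$) relations and to conclude that the Gram matrices of the $V_\alpha$'s coincide.
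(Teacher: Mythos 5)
Your proposal is correct and follows essentially the same route as the paper, which for this statement defers to~\cite{KozMobRank2}*{\textsection 3.1}: the equality case of the Ahlfors--Schwarz--Pick/Gauss inequality forces $f$ to be a totally geodesic K\"ahler immersion whose tangent directions all realize the extremal holomorphic sectional curvature $-1/q$, and the resulting pointwise condition $A(\xi)^\star A(\xi)=c(\xi)\,1_q$ is unpacked by the same polarization in $\lambda_i\bar\lambda_j$ to produce the pairwise orthogonal $n$-dimensional subspaces $V_\alpha$ with equal Gram matrices, whence $p\geq qn$ and $f=f_{\rm diag}$ up to an isometry by rigidity of totally geodesic maps with prescribed $1$-jet. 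The only points to polish are routine: the factor in the Gauss equation, the normalization forcing the common Gram matrix to be the identity in an orthonormal frame, and the determinant adjustment needed to place your unitary change of basis inside ${\rm S}(\U(p)\times\U(q))$.
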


This proposition shows that Theorem~\ref{main} and Corollary~\ref{description} for reductive
representations in $\SU(p,q)$ are direct consequences of Theorem~\ref{thm:maxreduc}. Indeed, this
theorem says that if $\r$ is a reductive maximal representation in $\SU(p,q)$ of a torsion free
uniform lattice $\G$ of $\SU(n,1)$, $n\geq 2$, then the $\r$-equivariant harmonic map
  $f:\H\pfd\Y_{p,q}$ is holomorphic or antiholomorphic.

  If $f$ is
holomorphic then the Ahlfors-Schwarz-Pick lemma gives the pointwise inequality
$f^\star\o_{\Y_{p,q}}\leq q\,\o$ whereas the maximality of $\rho$ means that $\int_X
f^\star\o_{\Y_{p,q}}\wedge\o^{n-1}=q\int_X\o^n$, so that necessarily
$f^\star\o_{\Y_{p,q}}= q\,\o$. By Proposition~\ref{special} we have $p\geq qn$ (which proves
Theorem~\ref{main})  and up to composition by an
element of $\SU(p,q)$, $f=f_{\rm diag}$, which is the first assertion of
  Corollary~\ref{description}. The second assertion follows easily (to prove that $\r$ is faithful,
  note that $\G$ being torsion free, it is isomorphic to its projection to $\PU(n,1)$ which acts
  effectively on $\H$). The third assertion follows from our description of the stabilizer of
  $f_{\rm diag}(\H)$ and the fact that up to conjugacy we may assume that
  $f_{\rm diag}$ is $\r$-equivariant so that for $\g\in\G$,  $\r(\g)$ acts on $f_{\rm diag}(\H)$ as
  $\r_{\rm diag}(\g)$.

If $f$ is antiholomorphic then 
the maximality of $\r$ implies $f^\star\o_{\Y_{p,q}}= - q\,\o$, so that again $p\geq qn$ and essentially
$f=\bar f_{\rm diag}$. 

\subsection{Non reductive representations}\label{nonreductive} \hfill

\smallskip

A very general result of M.~Burger, A.~Iozzi and A.~Wienhard asserts that so-called {\em tight}
  representations of lattices, uniform or not, 
of $\SU(n,1)$, $n\geq 1$, in Hermitian Lie groups are always reductive, see~\cite{BIW09}*{Corollary
  4}. Maximal representations are tight, so that the results of \textsection \ref{sec:reduc} and
\textsection \ref{sec:end} for reductive representations imply Theorem~\ref{main} and
Corollary~\ref{description} in the general case. 

It is however interesting to see that one can deduce the inexistence of non reductive maximal
representations in $\SU(p,q)$ from the rigidity just established for reductive maximal ones. 
We  explain here how to do this by deforming non reductive representations to reductive ones, a known 
operation sometimes called {\em  semi-simplification}:

\begin{lemma}\label{lemma:semisimple}
  A non reductive group homomorphism $\r$ of a group $\G$ in a semisimple Lie group $G$ without compact
  factors can be deformed to a reductive homomorphism $\r_{ss}:\G\pfd P$, where $P$ is a proper
  parabolic subgroup of $G$.
\end{lemma}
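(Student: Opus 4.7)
The plan is to produce the semi-simplification of $\r$ as the limit of a family $\r_t$ obtained by conjugating $\r$ by a one-parameter subgroup contained in (the center of) a suitable Levi subgroup.

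The starting point is that since $\r$ fails to be reductive, the Zariski closure $H\subset G$ of $\r(\G)$ has nontrivial unipotent radical $U_H:=R_u(H)$. By the Borel--Tits theorem on normalizers of unipotent subgroups of reductive groups, there exists a parabolic subgroup $P\subset G$ such that $U_H\subset R_u(P)=:U$ and $N_G(U_H)\subset P$; since $U_H\neq\{e\}$ and $G$ is semisimple with no compact factors, $P$ is necessarily a proper parabolic. In particular $\r(\G)\subset H\subset P$. Fix a Levi decomposition $P=L\ltimes U$; after conjugating $H$ inside $U$ if necessary, the Levi factor $L_H$ of $H$ sits inside $L$, and each element splits as $\r(\g)=l(\g)\,u(\g)$ with $l(\g)\in L_H\subset L$ and $u(\g)\in U$, where $\pi:P\twoheadrightarrow L$ is the natural projection.

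Next, the structure theory of parabolic subgroups provides a one-parameter subgroup $\l:\R^\times\to Z(L)$ with the contracting property $\lim_{t\to 0}\l(t)\,u\,\l(t)^{-1}=e$, uniformly on compact subsets of $U$ (take any cocharacter of the center of $L$ that pairs positively with every root of $U$). Define $\r_t(\g):=\l(t)\,\r(\g)\,\l(t)^{-1}$ for $t\in(0,1]$. Since $\l(t)\in Z(L)$ commutes with every element of $L_H$, one has $\r_t(\g)=l(\g)\cdot\bigl(\l(t)\,u(\g)\,\l(t)^{-1}\bigr)$, and the contracting property shows $\r_t(\g)\to l(\g)$ as $t\to 0$. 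Setting $\r_{ss}:=\pi\circ\r$ and $\r_0:=\r_{ss}$, the family $(\r_t)_{t\in[0,1]}$ is a continuous path in $\Hom(\G,G)$ joining (a conjugate of) $\r$ to $\r_{ss}$.

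Finally, the limit representation $\r_{ss}$ has image in $L\subset P$. The Zariski closure of $\r_{ss}(\G)=\pi(\r(\G))$ is exactly $\pi(H)=L_H$, which is reductive by the very definition of the Levi decomposition of $H$. Hence $\r_{ss}:\G\pfd P$ is reductive and $P\subsetneq G$ is proper, as required. The only nontrivial ingredient is the Borel--Tits theorem guaranteeing that the normalizer of a nontrivial unipotent subgroup is contained in a proper parabolic; once this is granted, the rest is an algebraic manipulation with the Levi decomposition and a standard contracting one-parameter subgroup, and the main obstacle is merely to keep track of the various conjugations so that $L_H$ is genuinely preserved by $c_{\l(t)}$.
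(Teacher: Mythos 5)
Your proof is correct and follows essentially the same route as the paper: both conjugate $\r$ by a one-parameter subgroup central in a Levi factor that contracts the unipotent radical containing $R_u(\overline{\r(\G)}^{\,\textsf z})$, and take the limit to land on the reductive quotient $L_H$. The only difference is one of sourcing — you assemble the required cocharacter from Borel--Tits together with the conjugacy of Levi factors under $R_u(P)$, whereas the paper quotes Richardson's Proposition~2.6, which packages exactly these containments ($H\subset P(\lambda)$, $U_H\subset N(\lambda)$, $L_H\subset L(\lambda)$) in one statement.
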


Together with our results for reductive representations, this implies:

\begin{coro}\label{cor:maxreduc}
  Let $\r$ be a non reductive representation of a torsion free uniform lattice $\G$ of $\SU(n,1)$ in
  $\SU(p,q)$, $p\geq q\geq 1$. Then $\r$ satisfies the Milnor-Wood inequality $|\tau(\r)|\leq
  q\,\vol(X)$. Moreover,  if $n\geq 2$, $\r$ is not maximal.   
\end{coro}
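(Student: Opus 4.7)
My plan is to use the semi-simplification Lemma~\ref{lemma:semisimple} to reduce everything to the already-established reductive case, with the gain that the deformed representation factors through a proper subgroup. First I would apply the lemma to obtain a continuous path of homomorphisms $\G\pfd\SU(p,q)$ from $\r$ to a reductive homomorphism $\r_{ss}$ whose image lies in a proper parabolic subgroup $P\subset\SU(p,q)$. Since the Toledo invariant is a characteristic number of the flat bundle on $X$ and depends only on the connected component of $\Hom(\G,\SU(p,q))$ containing the representation, we have $\tau(\r)=\tau(\r_{ss})$.

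Because $\r_{ss}$ is reductive with image in $P$, after conjugation its image lies in a Levi factor $L$ of $P$. The Levi factors of proper parabolic subgroups of $\SU(p,q)$ are, up to isogeny, of the form $S\bigl(\prod_i\GL(k_i,\C)\times \U(p',q')\bigr)$ with $\sum k_i\geq 1$, and the Hermitian part of $L$ (modulo compact and $GL$-factors) is a pseudo-unitary group $\SU(p',q')$ with $q'\leq q-1$. The next step is to show that the Toledo invariant of a representation factoring through $L$ is controlled entirely by its projection to this Hermitian factor. Concretely, the $\r_{ss}$-equivariant harmonic map $f_{ss}:\H\pfd\Y_{p,q}$ takes values in an $L$-orbit inside $\Y_{p,q}$, which splits (on the Lie algebra level) as the product of the $GL$-factors' symmetric spaces and a totally geodesic copy of $\Y_{p',q'}$; the pullback of $\omega_{\Y_{p,q}}$ to the $GL$-directions vanishes because these are not of Hermitian type, while on the $\Y_{p',q'}$-direction the inclusion is isometric and the normalization of the holomorphic sectional curvature in $[-1,-1/q]$ is compatible with the intrinsic normalization of $\omega_{\Y_{p',q'}}$. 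Hence $f_{ss}^\star\omega_{\Y_{p,q}}=f'^\star\omega_{\Y_{p',q'}}$, where $f'$ is the composition of $f_{ss}$ with the projection to $\Y_{p',q'}$, and therefore $\tau(\r_{ss})=\tau(\r')$ for $\r':=\pi_L\circ\r_{ss}:\G\pfd\SU(p',q')$.

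The representation $\r'$ is reductive (being the projection of a reductive one onto a reductive group), so Proposition~\ref{prop:MW} applies to it and yields $|\tau(\r')|\leq q'\vol(X)\leq (q-1)\vol(X)$. Combining with $\tau(\r)=\tau(\r_{ss})=\tau(\r')$, we obtain
\[
|\tau(\r)|\leq (q-1)\vol(X)< q\,\vol(X),
\]
which is strictly stronger than the claimed Milnor-Wood inequality and directly implies that $\r$ cannot be maximal. The assumption $n\geq 2$ enters only through the invocation of Proposition~\ref{prop:MW}, which relies on the tautological foliation on $\PTX$ (trivial for $n=1$).

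The main obstacle I anticipate is the rigorous verification that the pullback of $\omega_{\Y_{p,q}}$ to the $L$-orbit through a basepoint is supported only on the $\SU(p',q')$-factor, with the expected normalization. This is essentially a Lie-theoretic statement: at the Lie algebra level, the tangent space to the $L$-orbit decomposes as a sum of the tangent space to $\Y_{p',q'}$ and a complement coming from the $GL$-factors, and one must check that the symplectic form $\omega_{\Y_{p,q}}$ pairs these two pieces trivially and vanishes on the second. In spirit this is the same kind of computation as Lemma~\ref{transversesymplectic}, using the $S(U(p)\times U(q))$-invariant complex structure: the $GL$-directions correspond to a non-Hermitian part of the Levi, on which no $L$-invariant closed $(1,1)$-form coming from $\omega_{\Y_{p,q}}$ can survive.
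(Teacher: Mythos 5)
Your proposal is correct in outline, and the first step (semisimplify via Lemma~\ref{lemma:semisimple}, use constancy of the Toledo invariant on connected components to get $\tau(\r)=\tau(\r_{ss})$, then invoke the reductive Milnor--Wood inequality) is exactly the paper's. But for the non-maximality you take a genuinely different route. The paper argues by contradiction using the full strength of the rigidity theorem: if $\r$ were maximal then $\r_{ss}$ would be conjugate to $\r_{\rm diag}\times\r_{\rm cpt}$, Borel density (\cite{Dani}) forces $\overline{\r_{ss}(\G)}^{\,\textsf z}$ to contain $\r_{\rm diag}(\SU(n,1))$ and hence to have compact centralizer, contradicting the fact that it sits in a Levi subgroup of a proper parabolic, whose centralizer is never compact. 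You instead push the Milnor--Wood machinery down to the Hermitian factor $\SU(p',q')$, $q'\leq q-1$, of the Levi, obtaining the strictly stronger bound $|\tau(\r)|\leq (q-1)\,\vol(X)$. This buys a quantitative improvement and avoids the structure theorem for reductive maximal representations entirely (only Proposition~\ref{prop:MW} is needed); it also works for $n=1$, which is consistent since maximal surface group representations are reductive. Note in passing that your remark that $n\geq 2$ enters through Proposition~\ref{prop:MW} is inaccurate --- that proposition needs no such hypothesis, and in your approach the hypothesis $n\geq 2$ is simply not needed at all. The price is the Lie-theoretic verification you flag: that the restriction of $\o_{\Y_{p,q}}$ to the totally geodesic $L$-orbit is the pullback of $\o_{\Y_{p',q'}}$ with the intrinsic normalization. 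Your invariant-form argument handles each $\GL(k_i,\C)/\U(k_i)$ factor (the isotropy representation on Hermitian matrices is trivial plus the adjoint representation, neither of which admits a nonzero invariant antisymmetric form, and there is no pairing with the $\SU(p',q')$ directions by Schur), but when the parabolic is not maximal you must also rule out cross-terms between the trace directions of two different $\GL(k_i)$-blocks, where the isotropy representation is trivial on a space of dimension $\geq 2$; this follows from the identity $\o(\xi,\eta)=b(\zeta,[\xi,\eta])$, which kills commuting pairs in $\pg$. The normalization statement follows because $\Y_{p',q'}$ sits in $\Y_{p,q}$ as a holomorphic totally geodesic block embedding preserving the formula $-\tr\bigl((A^\star A)^2\bigr)/\bigl(\tr(A^\star A)\bigr)^2$ for the holomorphic sectional curvature, so the minimum on the subspace is still $-1$. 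With these verifications supplied, your argument is complete and correct.
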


Therefore the proofs of Theorem~\ref{main} and Corollary~\ref{description} for
  representations in $\SU(p,q)$ are complete.

\begin{proof}[Proof of Lemma~\ref{lemma:semisimple}] This follows
    e.g. from~\cite{Richardson}. Let $\rho:\G\pfd G$ a non reductive
  homomorphism: the Zariski
closure $\overline{\r(\G)}^{\,\textsf z}$ of $\r(\G)$ in $G$  is not a reductive group, so that its
unipotent radical $U$ is not trivial. Let $L$ be a Levi factor of $\overline{\r(\G)}^{\,\textsf
  z}$. By~\cite{Richardson}*{Proposition~2.6}, there exists a 1-parameter subgroup $\lambda$ of $G$,
such that $\overline{\r(\G)}^{\,\textsf z}$ is contained in the parabolic subgroup
$P(\lambda):=\{g\in G\,|\,\lim_{t\rightarrow +\infty}\lambda(-t)\,g\,\lambda(t)\mbox{ exists}\}$,
$U$ is contained in the unipotent radical $N(\lambda):=\{g\in G\,|\,\lim_{t\rightarrow
  +\infty}\lambda(-t)\,g\,\lambda(t)=1\}$ of $P(\lambda)$, and $L$ is contained in
$L(\lambda):=\{g\in G\,|\,\lim_{t\rightarrow +\infty}\lambda(-t)\,g\,\lambda(t)=g\}$ which is a Levi
subgroup of $P(\lambda)$. 

The homomorphism $\r_{ss}$ is then defined by 
$\rho_{ss}(\g)= \lim_{t\rightarrow
  +\infty}\lambda(-t)\,\rho(\g)\,\lambda(t)\in L(\lambda)$ for all $\g\in\G$. It is reductive and
maps $\G$ to $L\subset L(\lambda)\subset P(\lambda)$. The
parabolic subgroup 
$P(\lambda)$ is indeed a proper subgroup of $G$ since its unipotent radical contains $U$. 
\end{proof}

\begin{proof}[Proof of Corollary~\ref{cor:maxreduc}]
Deform $\r$
to the reductive representation $\r_{ss}$ as in Lemma~\ref{lemma:semisimple}. 
The
representation $\rho_{ss}$ belongs to the connected component of $\rho$ in the space 
$\Hom(\G,\SU(p,q))$, and 
therefore $\tau(\r_{ss})=\tau(\r)$.  
Proposition~\ref{prop:MW} gives the 
Milnor-Wood inequality on $\tau(\r_{ss})$, hence on $\tau(\r)$.  

Assume moreover that $n\geq 2$ and that $\rho$ is maximal.
Hence so is $\rho_{ss}$, and
by \textsection \ref{sec:end} we know that   
$p\geq nq$ and that, up to conjugacy by an element of $\SU(p,q)$, $\r_{ss}$ is a product
  $\r_{\rm cpt}\times\r_{\rm diag}$. Moreover  
$\r_{ss}(\G)$ is a lattice in the stabilizer of $\r_{\rm diag}(\H)$, which by
Proposition~\ref{special} is an almost direct product of the simple noncompact 
  group $\r_{\rm
    diag}(\SU(n,1))$ with its compact centralizer $K$. Therefore by ~\cite{Dani}*{Corollary~4.2},
  the Zariski closure $\overline{\r_{ss}(\G)}^{\,\textsf z}$ of $\rho_{ss}(\G)$ must contain $\r_{\rm 
    diag}(\SU(n,1))$, so that the 
centralizer of $\overline{\r_{ss}(\G)}^{\,\textsf z}$ in $\SU(p,q)$ is included in $K$. This is a
contradiction since by   
Lemma~\ref{lemma:semisimple}, $\overline{\r_{ss}(\G)}^{\,\textsf z}$ sits in a Levi subgroup of a
proper parabolic subgroup of $G$, hence its centralizer is not compact.  
\end{proof}

\section{Representations in classical Hermitian Lie groups other than $\SU(p,q)$}\label{sec:others}\hfill

In order to conclude the proof of Theorem~\ref{main},  one needs to rule out possible maximal
  representations in the remaining classical Hermitian Lie groups, namely $\SO_0(p,2)$ with $p\geq 3$, 
$\Sp(m,\R)$ with $m\geq 2$, and $\SO^\star(2m)$ with $m\geq 4$. We recall that their associated
symmetric spaces' ranks are $2$, $m$ and $\lfloor m/2\rfloor$ respectively. 

This section is therefore devoted to the proof of the following: 

\begin{theo}\label{others}
  There are no maximal representations of a uniform (torsion free) lattice of $\SU(n,1)$, $n\geq 2$,
  in the classical Hermitian Lie groups  $\SO_0(p,2)$ with $p\geq 3$, 
$\Sp(m,\R)$ with $m\geq 2$, and $\SO^\star(2m)$ with $m\geq 4$.
\end{theo}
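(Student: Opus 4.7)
The plan is to reduce each of the three remaining cases to Theorem~\ref{main} by embedding $G$ into an appropriate $\SU(P,Q)$ via a totally geodesic holomorphic inclusion of bounded symmetric domains, namely the standard inclusions $\Sp(m,\R)\hookrightarrow\SU(m,m)$ (symmetric matrices in the unit ball), $\SO^\star(2m)\hookrightarrow\SU(m,m)$ (antisymmetric matrices), and $\SO_0(p,2)\hookrightarrow\SU(p,2)$ (type IV Lie ball inside type I). A direct computation with the curvature formula $\kappa(\la A\ra)=-\tr((A^\star A)^2)/(\tr(A^\star A))^2$ from \textsection\ref{sec:Ypq}, with the normalization making the minimum holomorphic sectional curvature equal to $-1$, yields $\iota^\star\o_{\Y_{P,Q}}=c\,\o_{\Y_G}$ with $c=1$ for $\Sp(m,\R)$ and $\SO_0(p,2)$ (same rank on both sides) and $c=2$ for $\SO^\star(2m)$ (since antisymmetric directions in $\Y_{m,m}$ have minimum holomorphic sectional curvature $-1/2$).

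If $\r\colon\G\pfd G$ is maximal then the Toledo invariant of $\iota\circ\r$ in $\SU(P,Q)$ equals $c\,\rk(\Y_G)\,\vol(X)$. This equals $\rk(\Y_{P,Q})\,\vol(X)$ automatically for the first and third embeddings, and for $\SO^\star(2m)$ when $m$ is even. For $m$ odd, one first reduces to the even case via the Burger--Iozzi--Wienhard theorem that maximal representations into a non-tube-type Hermitian Lie group factor through the maximal tube-type subgroup (which for $\SO^\star(2m)$ with $m$ odd is $\SO^\star(2m-2)$). For $G=\Sp(m,\R)$ or $G=\SO^\star(2m)$ (even, or reduced to the even case), applying Theorem~\ref{main} to the maximal representation $\iota\circ\r$ in $\SU(m,m)$ forces $P\geq Qn$, i.e., $m\geq mn$, which is impossible for $n\geq 2$. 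Thus no maximal representations exist in these groups.

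For $G=\SO_0(p,2)$, Corollary~\ref{description} applies to $\iota\circ\r$: up to conjugation in $\SU(p,2)$, $\iota\circ\r=\r_{\rm diag}\times\r_{\rm cpt}$ where $\r_{\rm diag}\colon\SU(n,1)\hookrightarrow\SU(p,2)$ is the standard diagonal embedding. As an $\SU(n,1)$-representation on $\C^{p+2}$, this decomposes as $2\cdot\mathrm{std}\oplus(p-2n)\cdot\mathrm{triv}$, where $\mathrm{std}$ is the standard $(n+1)$-dimensional representation. For $n\geq 2$, $\mathrm{std}$ is not self-conjugate (its complex conjugate is the non-isomorphic dual), so no $\SU(n,1)$-equivariant antilinear involution exists on the $\mathrm{std}$-isotypic summand; hence $\r_{\rm diag}(\SU(n,1))$ preserves no real structure on $\C^{p+2}$. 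Since $\SO_0(p,2)\subset\SU(p,2)$ is (up to conjugacy) the stabilizer of a real structure, no conjugate of $\r_{\rm diag}(\SU(n,1))$ lies in $\SO_0(p,2)$. Combined with the uniqueness up to $\SU(p,2)$-conjugation of the equivariant totally geodesic embedding $f_{\rm diag}\colon\H\pfd\Y_{p,2}$ provided by Corollary~\ref{description}, this contradicts $\r(\G)\subset\SO_0(p,2)$.

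The main obstacle is the tightness analysis in the first step, especially for $\SO^\star(2m)$ with $m$ odd, which requires invoking the non-tube-type reduction of Burger--Iozzi--Wienhard; the representation-theoretic argument for $\SO_0(p,2)$ is conceptually clean but relies crucially on Corollary~\ref{description} and on the failure of self-conjugacy of the standard representation of $\SU(n,1)$ for $n\geq 2$.
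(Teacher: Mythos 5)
Your reductions for $\Sp(m,\R)$ and for $\SO^\star(2m)$ with $m$ even coincide with the paper's (Lemmas~\ref{curv:SpmR} and~\ref{curv:SOstar} and the surrounding discussion), but the two remaining cases each contain a genuine gap. For $\SO^\star(2m)$ with $m$ odd, the Burger--Iozzi--Wienhard ``factor through the maximal tube-type subgroup'' theorem that you invoke is a theorem about \emph{surface groups}; it is not available for lattices in $\SU(n,1)$ with $n\geq 2$, and its naive extension is in fact false: the maximal representation $\r_{\rm diag}|_\G$ into $\SU(nq,q)$ does not factor through a conjugate of the tube-type subgroup $\SU(q,q)$ (an invariant $2q$-dimensional subspace of $\mathrm{std}\otimes\C^q$ of signature $(q,q)$ would force $n=1$). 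This is precisely where the paper has to work hardest: Proposition~\ref{prop:holSOstar} uses the skew-symmetry of $\b(\xi)$ and $\g(\xi)$ to bound the generic ranks of $\tilde\b,\tilde\g$ by $m-1$, which upgrades Proposition~\ref{prop:MW} to the correct (smaller) Milnor--Wood bound $(m-1)\frac{\dego L^\vee}{2}$, and then a delicate analysis of constant-rank spaces of skew-symmetric matrices (Westwick, Youla) in Lemma~\ref{lem:decomp} and Proposition~\ref{prop:maxsostar} handles the equality case. None of this is replaced by your one-line reduction.

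For $\SO_0(p,2)$ the problem is more basic: the standard inclusion $\SO_0(p,2)\subset\SU(p,2)$ induces a totally geodesic but \emph{totally real} embedding of symmetric spaces, not a holomorphic one. The tangent space of the orbit of the base point is $M_{p,2}(\R)\subset M_{p,2}(\C)$, which is not a complex subspace for the ambient complex structure (multiplication by $\rmo$), while the invariant complex structure of the Lie ball is rotation by the $\SO(2)$-factor, $Z\mapsto -ZJ_2$; these do not agree. Consequently $\iota^\star\o_{\Y_{p,2}}$ vanishes identically on $M_{p,2}(\R)$ (since $\mathrm{Re}\,\tr\bigl((\rmo\,\xi)^\star\eta\bigr)=0$ for real $\xi,\eta$), so $\tau(\iota\circ\r)=0$ for \emph{every} $\r$ with values in $\SO_0(p,2)$, your constant is $c=0$ rather than $c=1$, and the reduction to Theorem~\ref{main} yields no information; the subsequent real-structure argument therefore never gets off the ground. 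The paper instead disposes of this case by quoting the sharper inequality $|\tau(\r)|\leq\frac{n+1}{n}\vol(X)<2\,\vol(X)$ from~\cite{KozMobRank2}, which rules out maximality immediately for $n\geq 2$.
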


Representations in $\SO_0(p,2)$ can be dealt with using the results of~\cite{KozMobRank2},
see~\textsection \ref{sec:SOp2}. Representations in $\Sp(m,\R)$ and $\SO^\star(2m)$ will
be treated in \textsection \ref{sec:SpmR} and \textsection \ref{sec:SOstar}, respectively. In these
two latter
cases the proof relies on the results of \textsection \ref{max}. Indeed, representations in these
groups can be seen as representations in 
$\SU(m,m)$. As we shall see, in the case of a representation into $\Sp(m,\R)$ or into
$\SO^\star(2m)$ for $m$ even, 
easy curvature computations show that the Milnor-Wood inequality the representation should satisfy
is the same as the Milnor-Wood inequality it satisfies (by \textsection \ref{max}) when seen as a
representation in $\SU(m,m)$. A maximal representation in these groups would therefore be a particular
maximal representation in $\SU(m,m)$,  and this is impossible, again by \textsection
\ref{max}. For representation in $\SO^\star(2m)$ with $m$ odd however, and although the general idea
is the same as in the $\SU(p,q)$ case, more work is needed, including rather painful verifications.

\medskip

\begin{rema} 
We assume $p\geq 3$ for $\SO_0(p,2)$ because $\SO_0(2,2)$ is not simple (it is locally isomorphic to
$\SU(1,1)\times\SU(1,1)$), $m\geq 2$ for $\Sp(m,\R)$ because $\Sp(1,\R)$ is isomorphic to $\SU(1,1)$,
and $m\geq 4$ for $\SO^\star(2m)$ because for $m=2$, $\SO^\star(4)$ is not simple (it is
locally isomorphic to $\SU(1,1)\times\SU(1,1)$), whereas for $m=3$, $\SO^\star(6)$ is locally
isomorphic to $\SU(3,1)$ (and therefore in this last case there are maximal representations of
lattices of $\SU(2,1)$
and $\SU(3,1)$ in this group). Note that $\SO^\star(8)$ is locally isomorphic to
$\SO_0(6,2)$.
\end{rema}  

\begin{rema}
  Since there are no maximal (in our sense) representations of a uniform lattice $\G$ of $\SU(n,1)$,
  $n\geq 2$, in any of the groups $\SO_0(p,2)$ with
  $p\geq 3$,  
$\Sp(m,\R)$ with $m\geq 2$, $\SO^\star(2m)$ with $m\geq 4$, or $\SU(p,q)$ with $p\geq q\geq 1$ but
$p<qn$, it is natural to wonder what is the maximal possible value of the Toledo invariant of a
representation 
$\r:\G\pfd G$, for $G$ a specific group in this list, and whether a representation realizing this
maximal value has some nice geometric properties. This seems to be a difficult question, whose answer
probably depends heavily on the specific target Lie group $G$. 

Observe however that the Milnor-Wood inequality is satisfied by representations of
surface groups, i.e. uniform lattices in $\SU(1,1)$, and that in this case maximal representations
(as defined in this paper) exist in any Hermitian Lie group. 
\end{rema}

\subsection{Representations in $\SO_0(p,2)$, $p\geq 3$}\label{sec:SOp2}\hfill

\smallskip

The case of representations in the groups $\SO_0(p,2)$ ($p\geq 3$) has already been treated
in~\cite{KozMobRank2} where it was shown that such representations satisfy the 
inequality $|\tau(\rho)|\leq \frac{n+1}n\vol(X)$. This is stronger than the Milnor-Wood inequality
since the rank of  the symmetric 
space associated to $\SO_0(p,2)$ is $2$ and $n\geq 2$. Hence there are no maximal representations in
this case.

\subsection{Representations in $\Sp(m,\R)$, $m\geq 2$}\label{sec:SpmR}\hfill

\smallskip

This group may be described as the following subgroup of $\SU(m,m)$: 
$$
\Sp(m,\R)=\{g\in\SU(m,m)\,|\,g^\top J_{m,m}\,g=J_{m,m}\}\
$$
where $g^\top$ is the transpose of the matrix $g$ and $J_{m,m}$ is the $2m$-by-$2m$ matrix
$$
J_{m,m}=\left(
\begin{array}{cc}
0 & 1_m \\
-1_m & 0 \\
\end{array}
\right).
$$
The associated symmetric space $\Y$ is totally geodesically, holomorphically, and
  $\Sp(m,\R)$-equivariantly, 
embedded in the symmetric space $\Y_{m,m}$ 
associated to $\SU(m,m)$ as 
$$\Y=\{Z\in M_{m}(\C)\,|\,1_m-Z^\star Z>0\ {\rm and}\ Z^\top =Z\}\,
\subset\{Z\in M_{m}(\C)\,|\,1_m-Z^\star Z>0\}=\Y_{m,m}
$$

Let us call $\iota:\Y\pfd\Y_{m,m}$ this embedding, $\o_\Y$ the $\Sp(m,\R)$-invariant metric on $\Y$
and $\o_{\Y_{m,m}}$ the $\SU(m,m)$-invariant metric of $\Y_{m,m}$, both normalized to that the
minimum of their holomorphic sectional curvature is $-1$. 

\begin{lemma}\label{curv:SpmR}
We have $\iota^\star \o_{\Y_{m,m}}=\o_\Y$. 
\end{lemma}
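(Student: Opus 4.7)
The plan is to combine uniqueness of invariant Kähler metrics on an irreducible Hermitian symmetric space with a short curvature computation to pin down the scaling constant.

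First, I would note that the embedding $\iota:\Y\hookrightarrow\Y_{m,m}$ is (as stated in the excerpt) holomorphic, totally geodesic, and $\Sp(m,\R)$-equivariant. Consequently $\iota^\star\o_{\Y_{m,m}}$ is a $\Sp(m,\R)$-invariant Kähler form on $\Y$. Since $\Sp(m,\R)$ is simple and $\Y$ is an irreducible Hermitian symmetric space of noncompact type, the isotropy representation at a point of $\Y$ is (real) irreducible, so that any two $\Sp(m,\R)$-invariant symmetric $\R$-bilinear forms on a tangent space are proportional. Hence there is a positive constant $c$ with
$$\iota^\star\o_{\Y_{m,m}}=c\,\o_\Y,$$
and it remains to prove that $c=1$.

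Next I would identify $c$ by comparing minimal holomorphic sectional curvatures. Both $\o_{\Y_{m,m}}$ and $\o_\Y$ are normalized so that the minimum of their holomorphic sectional curvature equals $-1$. Since $\iota$ is totally geodesic, the holomorphic sectional curvature of $\iota^\star\o_{\Y_{m,m}}$ along a complex line $\langle A\rangle\subset T\Y$ coincides with that of $\o_{\Y_{m,m}}$ along the same complex line viewed inside $T\Y_{m,m}$. At the base point, $T\Y_{m,m}$ is identified with $M_m(\C)$ and $T\Y$ with the subspace of symmetric matrices, and by the formula recalled in \textsection\ref{sec:Ypq},
$$\kappa_{\Y_{m,m}}(\langle A\rangle)=-\frac{\tr\bigl((A^\star A)^2\bigr)}{\bigl(\tr(A^\star A)\bigr)^2}.$$
This expression is always $\geq -1$ with equality exactly when $A$ has rank one. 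I would then simply exhibit a rank-one \emph{symmetric} $A$, say $A=vv^\top$ with $v\in\C^m\setminus\{0\}$: a one-line computation gives $A^\star A=\|v\|^2\,\bar v\, v^\top$, so $\tr(A^\star A)=\|v\|^4$ and $\tr((A^\star A)^2)=\|v\|^8$, yielding $\kappa=-1$. This shows that the minimum $-1$ is attained on actual tangent vectors to $\Y$, hence that the minimal holomorphic sectional curvature of $\iota^\star\o_{\Y_{m,m}}$ is exactly $-1$. Since a global rescaling of a Kähler metric by $c$ divides sectional curvatures by $c$, the equality $\iota^\star\o_{\Y_{m,m}}=c\,\o_\Y$ combined with both metrics having minimal holomorphic sectional curvature $-1$ forces $c=1$.

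There is no real obstacle: the only nontrivial input is the verification that a rank-one symmetric matrix achieves the ambient minimum of $\kappa_{\Y_{m,m}}$, which is the short trace calculation above. Everything else is a standard appeal to irreducibility of the isotropy action and to the fact that totally geodesic embeddings preserve sectional curvatures.
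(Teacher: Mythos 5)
Your proposal is correct and follows essentially the same route as the paper: both arguments reduce the statement to checking that the two invariant metrics have the same normalization, use the total geodesy of $\iota$ to identify the holomorphic sectional curvature of $\iota^\star\o_{\Y_{m,m}}$ with the restriction of the ambient curvature formula $-\tr\bigl((A^\star A)^2\bigr)/\bigl(\tr(A^\star A)\bigr)^2$ to symmetric matrices, and exhibit a rank-one symmetric matrix realizing the minimum $-1$ (the paper uses a diagonal matrix with a single nonzero entry, you use $vv^\top$, which is the same computation). Your explicit appeal to irreducibility of the isotropy representation just makes precise the paper's shorthand that two invariant metrics must be proportional.
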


\begin{proof} 
Both metrics are $\Sp(m,\R)$-invariant metrics on $\Y$, it is therefore enough to show that their
normalizations agree, namely that the minimum of the holomorphic sectional curvature of $\iota^\star
\o_{\Y_{m,m}}$ is $-1$.   
  Because $\iota$ is totally geodesic, the holomorphic sectional curvature of $\iota^\star \o_{\Y_{m,m}}$ is the
  restriction of the holomorphic sectional curvature of the metric $\o_{\Y_{m,m}}$ to complex lines
  in $T_\Y$. Now, at a point
  $o\in\Y\subset\Y_{m,m}$, the holomorphic tangent space $T_{\Y,o}$ to $\Y$ identifies with the
  subspace $S_m(\C)$ of symmetric matrices in $M_m(\C)\simeq T_{\Y_{m,m},o}$. Therefore, by the
  formula of \textsection \ref{sec:prelim}, the 
  holomorphic sectional curvature of $\iota^\star \o_{\Y_{m,m}}$ on the complex line $\la A \ra$
  generated by a nonzero symmetric $A\in M_m(\C)$ is $-\tr((A^\star A)^2)/(\tr (A^\star A))^2$ so
  that its minimum value is indeed $-1$ (attained for example by a diagonal matrix with only one non
  zero entry equal to $1$).
\end{proof}

Let $\rho$ be a representation of a lattice $\G$ of $\SU(n,1)$ in $\Sp(m,\R)$, and let
$\r':\G\pfd\SU(m,m)$ be $\r$ composed with the inclusion $\Sp(m,\R)\subset\SU(m,m)$.  By the
very definition of the Toledo invariant, we have 
$$\tau(\r)=\frac 1{n!}\,\int_Xf^\star\o_\Y\wedge \o^{n-1}=\frac 1{n!}\,\int_X f^\star\iota^\star
\o_{\Y_{m,m}}\wedge\o^{n-1}=\frac 1{n!}\,\int_X (\iota\circ f)^\star \o_{\Y_{m,m}}\wedge\o^{n-1}=\tau(\r')$$ 
which means that the Toledo invariant of $\rho:\G\pfd\Sp(m,\R)$ is the same as the Toledo invariant
of $\rho':\G\pfd\SU(m,m)$. 
Since the ranks of $\Y$ and $\Y_{m,m}$ are both equal to $m$, the results of~\textsection
\ref{max} give the Milnor-Wood inequality $|\tau(\rho)|\leq m\,\vol(X)$. Moreover, if the representation
$\rho$ in $\Sp(m,\R)$ is maximal, then the representation $\r'$ in $\SU(m,m)$ is also maximal. But
there are no such representations since by \textsection \ref{max},  maximal representations in $\SU(p,q)$ exist only if $p\geq
nq$ (as always, we assume that $n\geq 2$).

\subsection{Representations in $\SO^\star(2m)$, $m\geq 4$}\label{sec:SOstar} \hfill

\smallskip

We proceed as in the previous paragraph by considering representations with values in
$\SO^\star(2m)$ as special representations with values in $\SU(m,m)$. For $m$ even, this allows to
conclude as quickly as in the $\Sp(m,\R)$ case. 

\medskip

The group $\SO^\star(2m)$ may be described as the following subgroup of $\SU(m,m)$:
$$
\SO^\star(2m)=\{g\in\SU(m,m)\,|\,g^\top J'_{m,m}\,g=J'_{m,m}\}
$$
where $J'_{m,m}$ is the $2m$-by-$2m$ matrix
$$
J'_{m,m}=\left(
\begin{array}{cc}
0 & 1_m \\
1_m & 0 \\
\end{array}
\right).
$$
Observe that if $q'_{m,m}$ is the quadratic form on $\C^{2m}$ whose matrix in the canonical
  basis is $J'_{m,m}$ then $\SO^\star(2m)$ is a subgroup of
  $\SO(2m,\C)=\SO(\C^{2m},q'_{m,m})$. In fact it is a real form of this complex group.

The associated symmetric space $\Y$ is totally geodesically, holomorphically, and
  $\SO^\star(2m)$-equivariantly, embedded in $\Y_{m,m}$ as 
$$ 
\Y=\{Z\in M_{m}(\C)\,|\,1_m-Z^\star Z>0\ {\rm and}\ Z^\top =-Z\}\,
\subset\{Z\in M_{m}(\C)\,|\,1_m-Z^\star Z>0\}=\Y_{m,m}
$$

Again, call $\iota:\Y\pfd\Y_{m,m}$ this embedding, $\o_\Y$ the $\SO^\star(2m)$-invariant metric on $\Y$
and $\o_{\Y_{m,m}}$ the $\SU(m,m)$-invariant metric of $\Y_{m,m}$, both normalized to that the
minimum of their holomorphic sectional curvature is $-1$. 

\begin{lemma}\label{curv:SOstar}
We have $\iota^\star \o_{\Y_{m,m}}=2\,\o_\Y$. 
\end{lemma}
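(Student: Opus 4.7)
The plan is to follow exactly the strategy used for Lemma~\ref{curv:SpmR}. Both $\iota^\star\o_{\Y_{m,m}}$ and $\o_\Y$ are $\SO^\star(2m)$-invariant Kähler metrics on $\Y$. Since $\Y$ is an irreducible Hermitian symmetric space (for $m\geq 3$ the group $\SO^\star(2m)$ is simple, and the isotropy acts irreducibly on the holomorphic tangent space), the two metrics must be proportional. So it suffices to compare their normalization factors, i.e.\ to compute the minimum value of the holomorphic sectional curvature of $\iota^\star\o_{\Y_{m,m}}$ and show it equals $-1/2$. Then $\iota^\star\o_{\Y_{m,m}}=2\,\o_\Y$ follows from the fact that multiplying a metric by $\lambda$ divides its curvatures by $\lambda$.

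At a point $o\in\Y\subset\Y_{m,m}$, the holomorphic tangent space $T_{\Y,o}$ identifies with the subspace $\mathfrak{a}_m(\C)\subset M_m(\C)\simeq T_{\Y_{m,m},o}$ of skew-symmetric matrices (since $Z^\top=-Z$ cuts out $\Y$). Because $\iota$ is totally geodesic and holomorphic, the holomorphic sectional curvature of $\iota^\star\o_{\Y_{m,m}}$ along a complex line $\la A\ra\subset T_{\Y,o}$ is given by restricting the formula of \textsection\ref{sec:Ypq}:
$$
\kappa(\la A\ra)=-\frac{\tr\bigl((A^\star A)^2\bigr)}{\bigl(\tr(A^\star A)\bigr)^2}=-\frac{\sum_i\sigma_i^4}{\bigl(\sum_i\sigma_i^2\bigr)^2},
$$
where $\sigma_1,\ldots,\sigma_m$ denote the singular values of $A$.

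The key input is the standard fact that the nonzero singular values of a complex skew-symmetric matrix occur with even multiplicity (which follows, for example, from the Youla/Hua normal form $A=U\Sigma U^\top$ with $\Sigma$ block-diagonal with $2\times 2$ blocks $\sigma_j\left(\begin{smallmatrix}0&1\\-1&0\end{smallmatrix}\right)$, or alternatively from $\overline{A^\star A}=AA^\star=(A^\star A)^\top$). Writing the singular values as $\sigma_1,\sigma_1,\ldots,\sigma_k,\sigma_k$ (with $k=\lfloor m/2\rfloor$, plus a zero if $m$ is odd), one gets
$$
\frac{\sum_i\sigma_i^4}{\bigl(\sum_i\sigma_i^2\bigr)^2}=\frac{2\sum_{j=1}^k\sigma_j^4}{\bigl(2\sum_{j=1}^k\sigma_j^2\bigr)^2}=\frac{1}{2}\cdot\frac{\sum_j\sigma_j^4}{\bigl(\sum_j\sigma_j^2\bigr)^2}.
$$
The second factor is bounded above by $1$ and attains $1$ exactly when a single $\sigma_j$ is nonzero. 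Thus $\kappa(\la A\ra)\geq-1/2$ on $\Y$, with equality attained, for instance, at $A=\left(\begin{smallmatrix}0&1\\-1&0\end{smallmatrix}\right)\oplus 0$. Hence the minimal holomorphic sectional curvature of $\iota^\star\o_{\Y_{m,m}}$ on $\Y$ is $-1/2$, and by rescaling $\iota^\star\o_{\Y_{m,m}}=2\,\o_\Y$.

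The only real subtlety in this plan is justifying the pairing of singular values of a complex skew-symmetric matrix; once that is recorded, the rest is a one-line computation. As in the $\Sp(m,\R)$ case, the factor of $2$ (as opposed to $1$ for $\Sp$) arises precisely because the extremal vector in the symmetric setting has a single nonzero entry, while in the skew-symmetric setting an extremal vector must have two nonzero entries of equal modulus in a $2\times 2$ block.
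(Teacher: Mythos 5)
Your proof is correct and follows essentially the same route as the paper: reduce to computing the minimal holomorphic sectional curvature of $\iota^\star\o_{\Y_{m,m}}$ by invariance, identify $T_{\Y,o}$ with skew-symmetric matrices, apply the Youla normal form to pair the singular values, and conclude that the ratio $\tr((A^\star A)^2)/(\tr(A^\star A))^2$ is at most $1/2$ with equality for a single nonzero $2\times 2$ block. (Only your parenthetical alternative justification via $\overline{A^\star A}=AA^\star$ is not by itself conclusive for the pairing of multiplicities, but the Youla decomposition, which is also what the paper cites, settles it.)
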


\begin{proof} 
The proof is entirely similar to the proof of Lemma~\ref{curv:SpmR}: these two metrics on $\Y$ are
$\SO^\star(2m)$-invariant and all we need to prove is that the minimum of the holomorphic sectional
curvature of $\iota^\star \o_{\Y_{m,m}}$ is $-\frac 12$.  

At a point
  $o\in\Y\subset\Y_{m,m}$, the holomorphic tangent space $T_{\Y,o}$ to $\Y$ identifies with the
  subspace of 
  skew-symmetric matrices in $M_m(\C)\simeq T_{\Y_{m,m},o}$. Therefore, as in the proof of
  Lemma~\ref{curv:SpmR}, the
  holomorphic sectional curvature of $\iota^\star \o_{\Y_{m,m}}$ on the complex line $\la A \ra$
  generated by a nonzero skew-symmetric $A\in M_m(\C)$ is $-\tr((A^\star A)^2)/(\tr (A^\star A))^2$.

By Youla's
decomposition~\cite{You}, there exists a unitary matrix $U\in\U(m)$ such that $U^\top AU$ is a block
diagonal matrix with $\lfloor m/2\rfloor$ skew-symmetric 2-by-2 blocks $\left(\begin{matrix}0& -\alpha_i\\
    \alpha_i& 0\end{matrix}\right)$, where $\alpha_i\in\R$, and one additional zero on the diagonal
if $m$ is odd. Then, as $(U^\top AU)^\star (U^\top AU)=U^{-1}A^\star AU$, we get 
$$
\frac{\tr\bigl((A^\star A)^2\bigr)}{\bigl(\tr(A^\star A)\bigr)^2}=\frac{2\sum
  \alpha_i^4}{(2\sum\alpha_i^2)^2}
$$
which clearly implies the result.
\end{proof}

\medskip

Let $\rho$ be a representation of a lattice $\G$ of $\SU(n,1)$ in $\SO^\star(2m)$, and
$\r':\G\pfd\SU(m,m)$ be $\r$ composed with the inclusion $\SO^\star(2m)\subset\SU(m,m)$. By
definition, we have  
$$
\tau(\r)=\frac 1{n!}\,\int_Xf^\star\o_\Y\wedge \o^{n-1}=\frac 12 \,\frac 1{n!}\,\int_X (\iota\circ
f)^\star \o_{\Y_{m,m}}\wedge\o^{n-1}=\frac 12\,\tau(\r').
$$ 
As a consequence, the 
Milnor-Wood inequality $|\tau(\r)|\leq \lfloor m/2\rfloor\vol(X)$ is equivalent to the inequality
$|\tau(\rho')|\leq 2\lfloor m/2\rfloor\vol(X)$. 

\medskip

If $m$ is even, the Milnor-Wood inequality for 
$\r:\G\pfd\SO^\star(2m)$ is therefore the
usual Milnor-Wood inequality for $\r':\G\pfd\SU(m,m)$ and $\r$ is maximal if and only if $\r'$ is
maximal. As in the previous paragraph we may apply 
the results of \textsection \ref{max} to obtain the inexistence of
maximal representations in $\SO^\star(2m)$, $m$ even. 

\medskip

We assume from now on that $m$ is odd. 
Theorem~\ref{others} in this case is a consequence of the following two results:

\begin{prop}\label{prop:holSOstar}
Let $\r$ be a reductive maximal representation of a uniform lattice $\G\subset\SU(n,1)$ in
$\SO^\star(2m)$. Assume that $n\geq 2$ and that $m\geq 3$ is odd. Then if $\tau(\r)>0$,
resp. $\tau(\r)<0$, there exists a $\rho$-equivariant 
holomorphic, resp. antiholomorphic, map  $f:\H\pfd \Y$. 
\end{prop}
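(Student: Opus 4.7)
The plan is to follow the strategy of Theorem~\ref{thm:maxreduc}, carefully accounting for the additional $\SO^\star(2m)$-structure on the Higgs bundle. Since $\SO^\star(2m)\subset \SU(m,m)$, the Higgs bundle $(E,\theta)$ attached to $\rho$ still decomposes as $E=V\oplus W$ with $\rk V=\rk W=m$ and $\theta=\beta+\gamma$ as in \textsection\ref{sec:real}. The extra structure, coming from the symmetric $\C$-bilinear form $Q$ on $E$ induced by $J'_{m,m}$, identifies $V=W^\vee$ and makes $\beta(\xi)\in\Lambda^2 W^\vee$, $\gamma(\xi)\in\Lambda^2 W$ skew. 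For $m$ odd this forces $\rk\tilde\beta,\,\rk\tilde\gamma\leq m-1$ generically on $\PTX$. Combined with Proposition~\ref{prop:MW} and the curvature normalization of Lemma~\ref{curv:SOstar}, this gives a new proof of the Milnor-Wood bound $|\tau(\rho)|\leq\lfloor m/2\rfloor\vol(X)=\tfrac{m-1}{2}\vol(X)$, and shows that $\rho$ is maximal with $\tau(\rho)>0$ precisely when $\rk\tilde\beta=m-1$. The case $\tau(\rho)<0$ is symmetric with the roles of $\beta$ and $\gamma$ exchanged, so I focus on $\tau(\rho)>0$.

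Lemma~\ref{lem:sat} and Corollary~\ref{cor:small} then yield that the singular locus $\S_\beta\subset X$ is a proper analytic subset. For $x\notin\S_\beta$ and $\xi\in T_{X,x}\setminus\{0\}$, $\beta_x(\xi)$ has $1$-dimensional kernel $K_\xi$ and $(m-1)$-dimensional image $I_\xi$, linked by $I_\xi=K_\xi^{\perp_Q}$. Weak polystability along the leaves of $\T$ (Proposition~\ref{dec}) applied to \emph{both} degree-zero leafwise Higgs subsheaves $\cal N\otimes L^\vee\subset\tilde W$ (the $\tilde\beta$-kernel) and $\tilde W\oplus\cal I\subset\tilde E$ (with $\cal I$ the saturated $\tilde\beta$-image) produces two $\tilde\theta$-invariant leafwise orthogonal decompositions. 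Reading off the two resulting invariance conditions yields, at each such $x$ and $\xi$: (a) $\gamma_x(\xi)$ vanishes on the harmonic-orthogonal complement $I_\xi^\perp$ of $I_\xi$ in $V_x$, generalising Lemma~\ref{lem:vanish}, and the new condition (b) the image of $\gamma_x(\xi)$ is harmonic-orthogonal to $K_\xi$ in $W_x$.

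The conclusion combines (a) and (b) with the integrability relation $\beta(\xi)\gamma(\eta)=\beta(\eta)\gamma(\xi)$. For any $v\in I_\xi^\perp$ and any $\eta\in T_{X,x}$, (a) and integrability give $\gamma_x(\eta)v\in K_\xi$, while (b) gives $\gamma_x(\eta)v\perp K_\eta$. Since $K_\xi$ is $1$-dimensional, either the holomorphic linear map $\eta\mapsto\gamma_x(\eta)v$ is identically zero, or (using $n\geq 2$, so that its vanishing locus is a proper subspace of $T_{X,x}$) on a dense open set one has $K_\xi\subset K_\eta^{\perp}$, hence by continuity $K_\eta\perp K_\xi$ for \emph{all} $\eta$; then $\sum_\eta K_\eta\subsetneq W_x$, and via $I_\eta=K_\eta^{\perp_Q}$ this means $\bigcap_\eta I_\eta\neq\{0\}$. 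The key technical point is an $\SO^\star(2m)$ analog of Lemma~\ref{lem:intersect} asserting that $\bigcap_{\eta\neq 0}I_\eta=\{0\}$ at every $x\notin\S_\beta$ whenever $n\geq 2$. Granting it rules the second alternative out, so $\gamma_x(\eta)v=0$ for all $\eta$ and every $v\in\bigcup_{\xi\neq 0}I_\xi^\perp$; the same Lemma analog shows that this union spans $V_x$, and therefore $\gamma$ vanishes on $X\setminus\S_\beta$, hence everywhere, so $f$ is holomorphic.

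The main obstacle is exactly the $\SO^\star(2m)$ analog of Lemma~\ref{lem:intersect}. The $\SU(p,q)$ proof constructed a unique holomorphic preimage of $v$ under $\beta_x(\xi)$ and extended it across $\xi=0$ by Hartogs; this fails here because $\beta_x(\xi)$ has a $1$-dimensional kernel. One is forced into a case analysis according to whether $\bigcap_\xi K_\xi$ vanishes at $x$: in the generic case one can still produce a holomorphic preimage modulo $K_\xi$; in the degenerate case, the rank-$1$ sheaf $\cal N\otimes L^\vee\subset\tilde W$ is the pull-back of a rank-$1$ subsheaf $U\subset W$ with $\beta(U\otimes T_X)=0$, which by polystability of $(E,\theta)$ on $X$ splits off as a degree-zero Higgs factor, reducing $\rho$ to a representation of smaller dimension that one must then exclude by a separate analysis. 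These are the ``rather painful verifications'' foretold at the start of \textsection\ref{sec:others}.
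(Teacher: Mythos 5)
Your setup (real structure, skew-symmetry forcing $\rk\tilde\b,\rk\tilde\g\leq m-1$ for $m$ odd, the Milnor--Wood bound via Proposition~\ref{prop:MW}, properness of $\S_\b$ via Corollary~\ref{cor:small}, and the two polystability consequences (a) and (b)) matches the paper, and your deduction that $\g_x(\eta)$ kills every $I_\xi^\perp$ is exactly Lemma~\ref{lem:vanish2} (your dichotomy is in fact unnecessary: the second branch gives $K_\xi\perp K_\xi$ at $\eta=\xi$, an immediate contradiction). The genuine gap is your ``key technical point'': the claimed analog of Lemma~\ref{lem:intersect}, namely $\bigcap_{\eta\neq 0}I_\eta=\{0\}$ at a $\b$-regular point, is \emph{false}. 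Already for $m=3$: identifying rank-$2$ skew forms on $\C^3$ with their kernel lines, the kernels $N_\eta$ of a pencil $\eta\mapsto\b(\eta)$ sweep out at most a $2$-plane of $W_x$, so $\bigcap_\eta I_\eta=\bigcap_\eta N_\eta^\circ=\bigl(\sum_\eta N_\eta\bigr)^\circ\neq\{0\}$; and here $\bigcap_\eta K_\eta=\{0\}$, so your proposed case analysis on $\bigcap_\xi K_\xi$ does not touch the obstruction. The paper's own Lemmas~\ref{lem:decomp} and~\ref{lem:intersect2} quantify this in general: restricting to a $2$-plane $\la\xi,\eta\ra$ of directions, $\bigcap_{\zeta\neq 0}I_\zeta=V_2$ with $\dim V_2=\dim W_2-1\geq 1$ (since the kernels are not all equal, $\dim W_2\geq 2$). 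Hence $\bigcup_\xi I_\xi^\perp$ spans only the proper subspace $V_2^\perp$, and your argument leaves $\g$ undetermined on $V_2$.

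What is actually needed to close this is the content of Lemma~\ref{lem:decomp}: Westwick's theorem on $2$-dimensional spaces of matrices of constant rank $m-1$ produces decompositions $W_x=W_1\oplus W_2$, $V_x=V_1\oplus V_2$ with $\b(\zeta)|_{W_1}$ injective, $\b(\zeta)|_{W_2}:W_2\pfd V_2$ surjective, and $W_2=\la N_\zeta,\zeta\neq 0\ra$. Then the Hartogs-type argument of Lemma~\ref{lem:intersect} does apply to the injective block, giving $\bigcap_\zeta\b(\zeta)W_1=\{0\}$ and hence $\g(\eta)=0$ on $V_2^\perp$; and on $V_2$ one combines two facts you do not use: $\g(\eta)(V_2)\subset W_2$ (because $V_2$ is generated by vectors $\b(\xi)w$ with $w\in N_\zeta$ for some $\zeta\neq 0$, and integrability pushes their $\g$-images into $N_\zeta$), while $\Im\g(\eta)\subset\bigcap_\zeta N_\zeta^\perp=W_2^\perp$ by your condition (b). Together these force $\g(\eta)(V_2)=\{0\}$. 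Without this two-block analysis the proof does not go through.
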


\begin{prop}\label{prop:maxsostar}
If $n\geq 2$ and $m\geq 4$, there are no holomorphic map
$f:\H\pfd \Y$ such that $f^\star\o_\Y=\lfloor m/2\rfloor\,\o$.
\end{prop}

Indeed, to prove Theorem~\ref{others}, assume that there is a maximal representation of a lattice
$\G$ of $\SU(n,1)$, $n\geq 2$, in $\SO^\star(2m)$, with $m$ odd and $m\geq 5$. Then we may either
apply~\cite{BIW09}*{Corollary 4} to get that this representation is reductive, or 
semisimplify this representation as in \textsection \ref{nonreductive} to obtain a reductive 
representation $\rho$ with the same Toledo invariant, so that $\r$ is again maximal. By
Proposition~\ref{prop:holSOstar}, if $\tau(\r)>0$,  there exists a $\r$-invariant holomorphic map $f:\H\pfd\Y$ and by
the  Ahlfors-Schwarz-Pick lemma \cite{Ro80}, $f^\star\o_\Y\leq   \lfloor m/2\rfloor\,\o$. Since $\r$
is maximal, necessarily   $f^\star\o_\Y=  \lfloor m/2\rfloor\,\o$ and this is a contradiction by
Proposition~\ref{prop:maxsostar}. If $\tau(\r)<0$ then there is a antiholomorphic map $f:\H\pfd\Y$. But in
this case the conjugate $\bar f$ is holomorphic and satisfies $\bar f^\star\o_\Y=  \lfloor
m/2\rfloor\,\o$, again a contradiction. 

\medskip

\begin{proof}[Proof of Proposition~\ref{prop:holSOstar}] 

We work with the Higgs bundle $(E,\t)$ associated to the reductive representation $\r$ in $\SO^\star(2m)$. As
in the $\SU(p,q)$-case, this Higgs bundle has a real structure. Since as we saw $\SO^\star(2m)$ is
a subgroup of $\SU(m,m)$, the Higgs bundle $(E,\t)$ is in particular a
$\SU(m,m)$-Higgs bundle, so that we have $(E,\t)= (V\oplus W,\b\oplus\g)$, with $\rk\, V=\rk\, W=m$,
$\b:W\otimes T_X\pfd V$ and $\g:V\otimes T_X\pfd W$. Because $\rho$ takes its values in
$\SO^\star(2m)$, which is a real form of $\SO(2m,\C) =\SO(\C^{2m}, q'_{m,m})$,  we have
moreover an identification of $V$ with $W^\star $, and for all $\xi\in 
T_X$, $\beta(\xi)\in \Hom(W,W^\star)$ and  $\g(\xi)\in \Hom(W^\star,W)$ are skew-symmetric,
  namely
for all $w_1,w_2\in W$, $(\b(\xi)w_1)(w_2)=-(\b(\xi)w_2)(w_1)$ and for all $v_1,v_2\in V=W^\star$,
$v_1(\g(\xi)v_2)=-v_2(\g(\xi)v_1)$. 

 The harmonic metric on $E=V\oplus W$ comes from a reduction of the structure group of $E$ to the maximal
compact subgroup $\U(n)$ of $\SO^\star(2n)$. Therefore it is also compatible with the real structure in
the sense that if $(w_1,\ldots,w_m)$ is an orthonormal basis of the fiber $W_x$ above some $x\in X$
then the dual basis 
$(w^\star_1,\ldots,w^\star_m)$ of $V_x=W_x^\star$ is also orthonormal. Equivalently, for all subspace $F$ of
$W_x$ or of $V_x$, we have 
$(F^\perp)^\circ=(F^\circ)^\perp$, where if $F$ is a subspace of $W_x$, resp. $V_x$,  $F^{\perp}$ is the
orthogonal complement of $F$ in $W_x$, resp. $V_x$, w.r.t. the harmonic metric, and $F^\circ=\{v\in
V_x=W^\star_x\,\mid\, v_{| F}=0\}\subset V_x$, resp. $F^\circ=\{w\in W_x\,\mid\,v(w)=0,\,\forall
v\in F\}\subset W_x$.      

\smallskip

As in \textsection \ref{max}, we lift the Higgs bundle over $X$ to the projectivized tangent bundle
$\PTX$ and the Milnor-Wood inequality $|\tau(\r)|\leq \lfloor \frac m2\rfloor\,\vol(X)$ is equivalent to
$|\dego \tilde W|\leq 2 \, \lfloor \frac m2\rfloor\,\frac{\dego L^\vee}{2}$ (the factor 2 comes from
Lemma~\ref{curv:SOstar}).  
    
Since $m$ is odd, we therefore need to prove that $|\dego \tilde W|\leq (m-1)\,\frac{\dego
  L^\vee}{2}$. Now, $\tilde\beta$ and $\tilde\gamma$ being skew-symmetric, their generic ranks are
bounded above by $m-1$ (again because $m$ is odd). Thus Proposition~\ref{prop:MW}  proves the
Milnor-Wood inequality.

\medskip

If the representation is maximal, say with $\dego \tilde W>0$, then $\dego \tilde W=
  (m-1)\,\frac{\dego L^\vee}{2}$ and the generic rank of
$\tilde\beta:\tilde W\otimes L\pfd\tilde V$ on $\PTX$ is $m-1$.  Moreover, 
by Corollary~\ref{cor:small}, the  
singular locus $\S_\b$ of $\b$ is a proper analytic subset of $X$.

\medskip

We again work above a single point $x\in X$, $x\notin\S_\b$.  If $\xi\in T_{X,x}$, we will
write $\b(\xi)$, resp. $\g(\xi)$, for $\b_x(\xi)$, resp. $\g_x(\xi)$.

If $\xi\neq 0$, we know since $x\notin\S_\b$ that $\b(\xi)$ has rank $m-1$. We write
$N_\xi$ for the 1-dimensional kernel of $\beta(\xi):W_x\pfd V_x$, and $I_\xi$ for its
$(m-1)$-dimensional image. We denote by $N_\xi^\perp\subset W_x$ and $I_\xi^\perp\subset V_x$ their
orthogonal complements 
w.r.t. the harmonic metric. We remark that by skew-symmetry of $\b(\xi)$, $I_\xi\subset {N_\xi}^\circ$ and
  that since $\rk\,\b(\xi)=m-1$, in fact $I_\xi={N_\xi}^\circ$.
 
\smallskip

 We want to proceed as for the $\SU(p,q)$ case in \textsection \ref{sec:concl}. We  
have the exact same statement as Lemma~\ref{lem:vanish}, although the proof is slightly different:

\begin{lemma}\label{lem:vanish2}
  For all $\eta$ and all $\xi\neq 0$ in $T_{X,x}$, $\g(\eta)$ vanishes on $I_\xi^\perp$ and
hence maps $V_x$ to $\cap_{\xi\neq 0}N_\xi^\perp$.  
\end{lemma}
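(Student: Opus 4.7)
My plan is to mimic the proof of Lemma~\ref{lem:vanish} with the essential new ingredient being the skew-symmetry of $\b_x(\xi)$ and $\g_x(\eta)$ imposed by the $\SO^\star(2m)$-structure. The point is that, unlike in the $\SU(p,q)$ case, $\b_x(\xi)$ is no longer injective but has a 1-dimensional kernel $N_\xi$, and this has to be absorbed using the extra symmetry. First I would set up the weak polystability input: the equality $\dego\tilde W=(m-1)\,\dego L^\vee/2$ together with the proof of Proposition~\ref{prop:MW} forces the kernel sheaf of $\tilde\b$ to vanish and the saturated image sheaf $\cal I$ to satisfy $\dego(\tilde W\oplus\cal I)=0$. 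Proposition~\ref{dec}(\ref{polystab}) then yields, outside $\S_{\tilde\b}$, a subbundle $I\subset\tilde V$ and a leafwise Higgs decomposition $(\tilde E,\tilde\t)=(\tilde W\oplus I,\tilde\t|_{\tilde W\oplus I})\oplus(I^\perp,\tilde\t|_{I^\perp})$. Since $x\notin\S_\b$, this translates into $W_x\oplus I_\xi$ and $I_\xi^\perp$ being $\t_x(\xi)$-invariant for every $\xi\neq 0$ in $T_{X,x}$; as $I_\xi^\perp\subset V_x$ and $\t_x(\xi)|_{V_x}=\g_x(\xi)$ takes values in $W_x$, this forces $I_\xi^\perp\subset\Ker\g_x(\xi)$.

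Next I would apply the integrability condition $[\t,\t]=0$, rewritten as $\b_x(\xi)\circ\g_x(\eta)=\b_x(\eta)\circ\g_x(\xi)$: for any $v\in I_\xi^\perp$,
$$
\b_x(\xi)\bigl(\g_x(\eta)v\bigr)=\b_x(\eta)\bigl(\g_x(\xi)v\bigr)=0,
$$
so $\g_x(\eta)v\in\Ker\b_x(\xi)=N_\xi$. Here the $\SU(p,q)$-style argument stops, since $\b_x(\xi)$ is not injective. The skew-symmetry trick now closes the gap. Since $\b_x(\xi)$ has rank $m-1$, $I_\xi^\perp$ is 1-dimensional. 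Pick a unit $w_0\in N_\xi$; the compatibility of the harmonic metric with the real structure (expressed as $(F^\perp)^\circ=(F^\circ)^\perp$) gives $I_\xi^\perp=(N_\xi^\circ)^\perp=(N_\xi^\perp)^\circ$, and the dual basis vector $v_0:=w_0^\star\in W_x^\star=V_x$ generates $I_\xi^\perp$. Write $\g_x(\eta)v_0=\lambda w_0\in N_\xi$. Then the skew-symmetry of $\g_x(\eta)$ applied to $v_1=v_2=v_0$ yields $v_0(\g_x(\eta)v_0)=-v_0(\g_x(\eta)v_0)$, so $\lambda=v_0(\lambda w_0)=0$. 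Thus $\g_x(\eta)v_0=0$ and $\g_x(\eta)$ vanishes on all of $I_\xi^\perp$.

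For the ``hence'' part, skew-symmetry gives for every $v_1\in I_\xi^\perp$ and $v_2\in V_x$,
$$
v_1(\g_x(\eta)v_2)=-v_2(\g_x(\eta)v_1)=0,
$$
so $\g_x(\eta)v_2\in(I_\xi^\perp)^\circ$; the double annihilator identity $(I_\xi^\perp)^\circ=((N_\xi^\perp)^\circ)^\circ=N_\xi^\perp$ then shows $\g_x(\eta)(V_x)\subset N_\xi^\perp$, and intersecting over all $\xi\neq 0$ gives the claim. The main obstacle compared to Lemma~\ref{lem:vanish} is precisely the loss of injectivity of $\b_x(\xi)$; the substance of the proof is in recognizing that the $\SO^\star(2m)$-imposed skew-symmetry, combined with the fact that the kernel has rank exactly one, is sharp enough to recover vanishing of $\g_x(\eta)$ on the 1-dimensional space $I_\xi^\perp$.
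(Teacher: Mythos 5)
Your proof is correct, and in the crucial middle step it takes a genuinely different (and arguably cleaner) route than the paper. Both arguments share the same setup: weak polystability gives $I_\xi^\perp\subset\Ker\g(\xi)$, and integrability then gives $\g(\eta)(I_\xi^\perp)\subset N_\xi$ for all $\eta$. To kill this remaining $N_\xi$-component, the paper invokes weak polystability a \emph{second} time --- applied to the kernel sheaf $\cal N$ of $\tilde\b$, whose foliated degree also vanishes in the maximal case --- to get $\g(\eta)(V_x)\subset N_\eta^\perp$, and then concludes $\g(\eta)(I_\xi^\perp)\subset N_\xi\cap N_\eta^\perp=\{0\}$ for $\eta$ close to $\xi$, extending to all $\eta$ afterwards. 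You instead pair $\g(\eta)v_0$ directly against $v_0=w_0^\star$ and use the skew-symmetry $v_0(\g(\eta)v_0)=-v_0(\g(\eta)v_0)$ together with the identification $I_\xi^\perp=(N_\xi^\perp)^\circ=\langle w_0^\star\rangle$, which correctly relies on $I_\xi=N_\xi^\circ$ and on the compatibility of the harmonic metric with the real structure (this is what guarantees $v_0(w_0)\neq 0$, without which the pairing argument would collapse). Your route avoids both the second polystability input and the continuity-in-$\eta$ argument; the paper's route does not need the explicit dual-basis identification. The ``hence'' parts of the two proofs are essentially identical.

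One slip to fix in your setup: the equality $\dego\tilde W=(m-1)\,\dego L^\vee/2$ does \emph{not} force the kernel sheaf of $\tilde\b$ to vanish. Here the generic rank of $\tilde\b$ is $m-1<\rk\,\tilde W$, so $\cal N$ has rank $1$; what the equality case of Proposition~\ref{prop:MW} forces is $\dego(\cal N\otimes L^\vee)=0$ together with $\dego(\tilde W\oplus\cal I)=0$. Since you only use the second of these facts, the error is harmless to your argument, but as written the sentence contradicts the rest of your proof, which hinges precisely on $N_\xi=\Ker\b(\xi)$ being one-dimensional rather than trivial.
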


\begin{proof}
Let $\xi\neq 0$.  Exactly as in Lemma~\ref{lem:vanish}, $\g(\xi)$ vanishes on $I_\xi^\perp\subset V_x$
by weak polystability along 
the leaves, because $I_\xi^\perp$ must be stable by the Higgs field.  

Hence, for all $\eta$, by integrability of the Higgs field,
$\b(\xi)\circ\g(\eta)=\b(\eta)\circ\g(\xi)$ vanishes on 
$I_\xi^\perp$ so that 
$\g(\eta)$ maps $I_\xi^\perp$ to $N_\xi$. But, again by weak polystability, $\g(\eta)$ also maps
$V_x$ to $N_\eta^\perp$, because $N_\eta^\perp\oplus I_\eta$ is stable by the Higgs field. Therefore
$\g(\eta)(I_\xi^\perp)\subset N_\xi\cap N_\eta^\perp$, so that for $\eta$ close to $\xi$, and
hence for all $\eta$,    $\g(\eta)(I_\xi^\perp)=0$. 

 Now, $\g(\eta)$ being skew-symmetric, $\Im \g(\eta)\subset (\Ker\g(\eta))^\circ$, so that 
$\Im \g(\eta)\subset (I_\xi^\perp)^\circ=N_\xi^\perp$, since as we saw $I_\xi={N_\xi}^\circ$. Hence our claim. 
\end{proof}

The fact that the $\b(\xi)$'s are not injective here makes the situation a little more complicated
than in \textsection \ref{sec:concl},
and for example Lemma~\ref{lem:intersect} does not hold. 
It is however
possible to exploit the fact that the $\b(\xi)$'s all have the same rank.

Since $n\geq 2$ we may choose two linearly independent tangent vectors $\xi$ and $\eta$. The letter
$\zeta$ will denote a 
tangent vector in $\la \xi,\eta\ra$.  (In this proof, whenever $(v_i)_{i\in I}$ is a family of
vectors or subspaces in a vector space, $\la v_i,\,i\in I\ra$ denotes the subspace generated by the
$v_i$'s.) 

\begin{lemma}\label{lem:decomp}
There exist decompositions $W_x=W_1\oplus W_2$ and $V_x=V_1\oplus V_2$ such that    
\begin{itemize}
\item $\beta(\zeta)(W_i)\subset V_i$, for all $\zeta$ and all $1\leq i\leq 2$;
\item $\dim V_1=\dim W_1+1$ and $\dim V_2=\dim W_2-1$;
\item $\b(\zeta)_{|W_1}:W_1\pfd V_1$ is one-to-one for all $\zeta\neq 0$; 
\item $\b(\zeta)_{|W_2}:W_2\pfd V_2$ is onto for all $\zeta\neq 0$. 
\end{itemize}
\end{lemma}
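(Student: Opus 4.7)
My plan is to reduce the statement to the classical Kronecker normal form for skew-symmetric matrix pencils of constant corank $1$ on odd-dimensional spaces, and then to read the required decomposition off that normal form using the identification $V_x = W_x^\star$.

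The preliminary step is to rule out the configuration $N_\xi = N_\eta$: under our hypotheses this would force a pencil of \emph{nondegenerate} skew forms on the even-dimensional quotient $W_x / N_\xi$, whose Pfaffian would be a nonzero homogeneous polynomial of positive degree in $(s,t)$ and hence would vanish somewhere on $\P^1$, a contradiction. Thus $N_\xi \cap N_\eta = 0$, and in particular the common kernel $\bigcap_{\zeta \neq 0} N_\zeta$ is trivial. I would then define $W_2 := \sum_{\zeta \neq 0} N_\zeta$ and verify directly from skew-symmetry that $W_2$ is $\b(\zeta)$-isotropic for every $\zeta \in P := \la\xi,\eta\ra$: given $n_i \in N_{\zeta_i}$ with $\zeta_1, \zeta_2$ linearly independent and any $\zeta = a\zeta_1 + b\zeta_2 \in P$,
\[
(\b(\zeta)n_1)(n_2) = b\,(\b(\zeta_2)n_1)(n_2) = -b\,(\b(\zeta_2)n_2)(n_1) = 0,
\]
and bilinearity extends this to all of $W_2 \times W_2$.

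The crucial input is then the Kronecker normal form for constant-corank-$1$ skew-symmetric pencils on an odd-dimensional space with trivial common kernel: in a suitable basis $(e_1, \ldots, e_{2k+1})$ of $W_x$ (with $m = 2k+1$), the matrix of $\b(s\xi+t\eta)$ is tridiagonal, with nonzero entries only on the two subdiagonals adjacent to the main diagonal and alternating between $\pm s$ and $\pm t$. In this basis a direct computation identifies $W_2$ with $\la e_1, e_3, \ldots, e_{2k+1}\ra$, giving $\dim W_2 = (m+1)/2$, and the complement $W_1 := \la e_2, e_4, \ldots, e_{2k}\ra$ is also $\b(\zeta)$-isotropic for every $\zeta$ because the relevant matrix entries $B_{2i,2j}$ vanish identically. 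Setting $V_i := W_i^\circ \subset V_x$, the splitting $V_1 \oplus V_2 = V_x$ follows from $W_1 \oplus W_2 = W_x$ by taking annihilators, with $\dim V_1 = (m+1)/2 = \dim W_1 + 1$ and $\dim V_2 = (m-1)/2 = \dim W_2 - 1$; the isotropy of $W_i$ translates exactly to $\b(\zeta)(W_i) \subset V_i$; injectivity of $\b(\zeta)|_{W_1}$ is immediate from $W_1 \cap N_\zeta \subset W_1 \cap W_2 = 0$; and surjectivity of $\b(\zeta)|_{W_2}$ onto $V_2$ follows from the dimension count $\dim \b(\zeta)(W_2) = \dim W_2 - \dim(W_2 \cap N_\zeta) = (m-1)/2 = \dim V_2$, combined with the inclusion $\b(\zeta)(W_2) \subset V_2$.

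The main obstacle is the appeal to the Kronecker normal form, whose simultaneous handling of skew-symmetry and the constant-corank hypothesis is somewhat technical. A basis-free alternative would be a direct line-bundle computation on $\P(P) \cong \CP^1$: skew-symmetry identifies the cokernel of the induced morphism $\phi: W_x \otimes \mathcal{O}(-1) \to V_x \otimes \mathcal{O}$ with a twist of the dual of its kernel sheaf, forcing $\deg(\Ker \phi) = -(m+1)/2$ and hence $\dim W_2 = (m+1)/2$ via the isotropy bound; producing the isotropic complement $W_1$ without choosing a basis remains the delicate point in this alternative.
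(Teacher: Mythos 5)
Your proof is correct, but it takes a genuinely different route from the paper's. The paper invokes Westwick's theorem on two-dimensional spaces of linear maps of constant rank, which ignores the skew-symmetry and first produces an a priori longer decomposition $W_x=W_0\oplus W_1\oplus\cdots\oplus W_r$, $V_x=V_0\oplus\cdots\oplus V_r$; skew-symmetry and a determinant argument are then used a posteriori to kill $W_0$ and $V_0$ and to force $r=2$ with exactly one block satisfying $\dim V_i=\dim W_i+1$. You instead exploit the skew-symmetry from the outset via the congruence canonical form of skew-symmetric matrix pencils, reducing the constant-corank-one pencil to a single minimal-index Kronecker block of size $m$. This buys you two things: the subspace $W_2$ is produced canonically as $\sum_{\zeta\neq0}N_\zeta$, so your argument absorbs the second statement of Lemma~\ref{lem:intersect2} for free, and all four properties of the lemma drop out of the explicit tridiagonal form by inspection (isotropy of the even- and odd-indexed parts, plus the annihilator bookkeeping $V_i=W_i^\circ$, which you carry out correctly). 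The price is that your crucial input --- that constant corank $1$ for all $\zeta\neq 0$, together with triviality of the common kernel, forces a single block of minimal-index type --- is asserted rather than proved; it is true (the generic corank counts the minimal-index blocks, so there is exactly one, and any regular block would create a rank drop at some point of the projective line of directions, contradicting constancy), but you should either include this short argument or cite a precise reference for the skew-symmetric Kronecker form, exactly as the paper cites Westwick. Your preliminary Pfaffian argument ruling out $N_\xi=N_\eta$ and your verification that $\sum_{\zeta\neq 0}N_\zeta$ is isotropic for every $\b(\zeta)$ are both correct and close in spirit to the paper's own observation that the kernels must be distinct.
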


\begin{proof}
  The set $\{\b(\zeta),\,\zeta\in \la \xi,\eta\ra\}$ 
is a 2-dimensional linear subspace of $\Hom(W_x,V_x)$, whose non zero elements are all of
rank $(m-1)$ (it is 2-dimensional because $\zeta\mapsto\b(\zeta)$ is linear). Therefore by~\cite{westwick}*{Theorem~3.1}, there exist $r\geq 1$, and decompositions
$W_x=W_0\oplus W_1\oplus\cdots\oplus W_r$ and $V_x=V_0\oplus V_1\oplus\cdots\oplus V_r$ such that 
\begin{itemize}
\item $\beta(\zeta)(W_0)=\{0\}$, for all $\zeta$;
\item $\beta(\zeta)(W_i)\subset V_i$, for all $\zeta$ and all $1\leq i\leq r$;
\item $\dim V_i=\dim W_i\pm 1$ for all $1\leq i\leq r$;
\item $\rk\,\b(\zeta)_{|W_i}=\min\{\dim W_i,\dim V_i\}$, for all $\zeta\neq 0$ and all
  $1\leq i\leq r$. 
\end{itemize}

We remark that since $\xi$ and $\eta$ are linearly independent, the kernels $N_\xi$ and
$N_\eta$ of $\b(\xi)$ and 
$\b(\eta)$ are distinct, and so are their images $I_\xi$ and $I_\eta$. Indeed, the equality of the
images is equivalent 
to the equality of the kernels by skew-symmetry. Therefore if they were equal, we would get that
$\b(\zeta)$ defines an isomorphism $N_\xi^\perp\pfd I_\xi$ for all $\zeta\neq 0$.
This is impossible for example because since $\la\xi,\eta\ra$ is 2-dimensional,
$\zeta\mapsto\det\b(\zeta)$ cannot vanish only for $\zeta=0$.    
Therefore $W_0=\{0\}$. Also, $V_0=\{0\}$ because if not then necessarily $\dim V_0=1$ and
$I_\zeta=V_1\oplus\cdots\oplus V_r$ for all $\zeta\neq 0$. 

Moreover, since $\dim W_x=\dim V_x$,
there must be at least one $i$ such that $\dim V_i=\dim W_i+1$. Say that $\dim V_i=\dim W_i+1$ for
$1\leq i\leq s$ and   $\dim V_i=\dim W_i-1$ for
$s+1\leq i\leq r$. Then for $\zeta\neq 0$, $\rk\b(\zeta)=m-(r-s)=m-s$, so that $s=1$ and
$r=2$. 
\end{proof}

This allows to give the analog of Lemma~\ref{lem:intersect} in the present situation: 

\begin{lemma}\label{lem:intersect2}
We have that $\cap_{\zeta\neq 0}\b(\zeta)W_1=\{0\}$ and $W_2=\la N_\zeta,\,\zeta\neq 0\ra$.
\end{lemma}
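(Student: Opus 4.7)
My plan is to prove both assertions by the same holomorphic-extension argument already used in Lemma~\ref{lem:intersect}, exploiting the fact that $\la\xi,\eta\ra$ is a \emph{2-dimensional} complex vector space, so that any holomorphic map on $\la\xi,\eta\ra\setminus\{0\}$ extends holomorphically across the origin by Hartogs' theorem.

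For the first assertion, let $v\in\cap_{\zeta\neq 0}\b(\zeta)W_1$. By Lemma~\ref{lem:decomp}, $\b(\zeta)|_{W_1}:W_1\pfd V_1$ is injective for every $\zeta\neq 0$, so there is a unique $\varphi(\zeta)\in W_1$ with $\b(\zeta)\varphi(\zeta)=v$. Local application of Cramer's rule to a nonvanishing $(\dim W_1)\times(\dim W_1)$ minor of the matrix of $\b(\zeta)|_{W_1}$ shows that $\zeta\mapsto\varphi(\zeta)$ is holomorphic on $\la\xi,\eta\ra\setminus\{0\}$, and the uniqueness of $\varphi(\zeta)$ glues these local formulas into a globally defined holomorphic map. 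By Hartogs' theorem it extends holomorphically to $0\in\la\xi,\eta\ra$, and then $v=\b(0)\varphi(0)=0$ since $\b(\cdot)$ is linear in $\zeta$.

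For the second assertion, I dualize. Write $(\la N_\zeta,\,\zeta\neq 0\ra)^\circ=\cap_{\zeta\neq 0} N_\zeta^\circ\subset W_2^*$ for the annihilator, and observe that since $N_\zeta$ is the kernel of the surjection $\b(\zeta)|_{W_2}:W_2\twoheadrightarrow V_2$, its annihilator equals the image of the injective transpose $(\b(\zeta)|_{W_2})^{\top}:V_2^*\hookrightarrow W_2^*$. Thus it is enough to show $\cap_{\zeta\neq 0}(\b(\zeta)|_{W_2})^{\top}(V_2^*)=\{0\}$, which is formally the same problem as in the first assertion: given $u$ in this intersection, the unique $\psi(\zeta)\in V_2^*$ with $(\b(\zeta)|_{W_2})^{\top}\psi(\zeta)=u$ depends holomorphically on $\zeta\in\la\xi,\eta\ra\setminus\{0\}$, extends across $0$ by Hartogs, and forces $u=0$.

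No serious obstacle is expected: the only technical point is to check that the locally defined inverses $\varphi$ and $\psi$ are genuinely holomorphic and single-valued on $\la\xi,\eta\ra\setminus\{0\}$, which follows immediately from the uniqueness statements. The Hartogs extension then applies because $\dim_\C\la\xi,\eta\ra=2$ is precisely the borderline case required, which is also exactly where the assumption $n\geq 2$, via the existence of two linearly independent tangent vectors $\xi,\eta$ at $x$, enters the argument.
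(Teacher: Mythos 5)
Your proposal is correct and follows essentially the same route as the paper: the paper proves the first assertion by repeating the holomorphic-inverse-plus-Hartogs argument of Lemma~\ref{lem:intersect} (using the injectivity of $\b(\zeta)_{|W_1}$), and disposes of the second assertion with the words ``by duality, the same reasoning implies the second statement,'' which is exactly the transpose/annihilator reduction you spell out. Your version merely makes the duality explicit, and correctly identifies that Hartogs applies because $\la\xi,\eta\ra$ is $2$-dimensional.
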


\begin{proof}
  For all $\zeta\neq 0$, $\b(\zeta)_{|W_1}$ is injective. Hence the proof of the first statement is
  the same as that of Lemma~\ref{lem:intersect}. By duality, the same reasoning implies the second
  statement.    
\end{proof}

The first point of Lemma~\ref{lem:intersect2} implies that $\g(\eta)$ vanishes on $V_2^\perp$, since
it vanishes on each $I_\zeta^\perp$ by Lemma~\ref{lem:vanish2} and 
$$\la I_\zeta^\perp,\,\zeta\neq 0\ra=\left({\bigcap_{\zeta\neq
      0}I_\zeta}\right)^\perp=\left({\bigcap_{\zeta\neq 
    0}\left({V_2\oplus\b(\zeta)W_1}\right)}\right)^\perp=\left({V_2\oplus\left(\bigcap_{\zeta\neq
    0}\b(\zeta)W_1\right)}\right)^\perp=   V_2^\perp.
$$  
The second point implies that $\g(\eta)(V_2)\subset W_2$, and hence that $\g(\eta)$ vanishes on
$V_2$. Indeed, 
since $\b(\xi)_{|W_2}:W_2\pfd V_2$ is surjective, $V_2$ is generated by vectors of the form
$\b(\xi)w$ with $w\in W_2$ such that $\b(\zeta)w=0$ for some $\zeta\neq 0$. Hence, using the
integrability condition $[\t,\t]=0$ of the Higgs field, we get 
$\b(\zeta)\circ\g(\eta)\circ\b(\xi)w=\b(\eta)\circ\g(\xi)\circ\b(\zeta)w=0$, so that
$\g(\eta)\circ\b(\xi)w\in N_\zeta\subset W_2$. 
Now we saw in Lemma~\ref{lem:vanish2} that $\g(\eta)(V_x)\subset\cap_{\zeta\neq
  0}N_\zeta^\perp=W_2^\perp$. Hence $\g(\eta)(V_2)=\{0\}$.   

We conclude that $\g=0$ outside of $\S_\b$, hence everywhere, so that the $\rho$-equivariant harmonic map $f$ is holomorphic.

\smallskip 

In the same way, if the representation is maximal and $\dego\tilde W<0$, we get that $\b=0$ so that the
$\rho$-equivariant harmonic map 
$f$ is antiholomorphic. 
\end{proof}

\medskip

\begin{proof}[Proof of Proposition~\ref{prop:maxsostar}]

Assume that there exists a holomorphic map $f:\H\pfd\Y$ such that
  $f^\star\o_\Y=\lfloor\frac m2\rfloor\,\o$. 
  By the equality case of the Ahlfors-Schwarz-Pick lemma, see \cite{Ro80}, for all
$\xi\neq 0$ in the  image of $df$, 
the holomorphic sectional curvature of $\o_\Y$ in the direction of $\xi$  is maximal, i.e. equal to
$-\frac 1{\lfloor m/2\rfloor}$. Moreover, the map $f$ is an immersion, so that
  the image of $df$ in $T_\Y$ has dimension $n$ at each point.

The lemma will follow if we prove that for $m\geq 4$, and for $o$ a point in $\Y$,
the maximal dimension of a subspace of 
$T_{\Y,o}$ on which the holomorphic sectional curvatures of $\o_\Y$ equal $-\frac 1{\lfloor m/2\rfloor}$
is $1$. Lemma~\ref{curv:SOstar} and its proof show that this is
equivalent to proving that for $m\geq 4$, the dimension of a maximal linear subspace of
skew-symmetric matrices in $M_m(\C)$ such that $\frac{\tr((A^\star A)^2)}{(\tr(A^\star A))^2}= \frac
1{2\,\lfloor m/2\rfloor}$ 
is $1$. 

The Youla decomposition of a skew-symmetric matrix $A$ (see the proof of
Lemma~\ref{curv:SOstar}) shows that  $\frac{\tr((A^\star A)^2)}{(\tr(A^\star A))^2}= \frac
1{2\,\lfloor m/2\rfloor}$
if and only if 
$A^\star A$ is unitary conjugate to $\alpha^2 1_m$ if $m$ is even (and hence equal to $\alpha^2
1_m$), or to $\alpha^2{\rm diag}(1,\dots,1,0)$ if $m$ is odd. 

\smallskip

We will prove our claim by contradiction. So let now $A$ and $B$ be two linearly independent
skew-symmetric matrix in $M_m(\C)$, such that each non zero
matrix $C$ in the two-dimensional vector space they generate satisfies $\frac{\tr((C^\star
  C)^2)}{(\tr(C^\star C))^2}= \frac 1{2\,\lfloor m/2\rfloor}$. We normalize $A$
and $B$ such that $\tr\bigl(A^\star A\bigr)=\tr\bigl(B^\star B\bigr)=2\lfloor m/2\rfloor$. We can also suppose
that $A$ and $B$ are orthogonal (i.e. $\tr\bigl(A^\star B\bigr)=0$).

If $m$ is even then clearly we have a contradiction. Indeed, for all $\lambda,\mu\in\C$, $(\lambda
A+\mu B)^\star (\lambda A+\mu B)$ is a multiple of $1_m$, hence $\bar\lambda\mu\, A^\star B+\lambda \bar\mu\,
B^\star A$ is also a multiple of $1_m$, but it is trace free so $A^\star B$ must be equal to zero, which is
not possible as the column vectors of $A$ (and $B$) generate $\C^m$. 

From now on, we assume that $m$ is odd. Then for any
$\lambda,\mu\in\C$, $(\lambda,\mu)\neq (0,0)$, the matrix
$$(\lambda A+\mu B)^\star (\lambda A+\mu B)-\frac{1}{m-1}\tr\bigl[(\lambda A+\mu B)^\star (\lambda A+\mu
B)\bigr]1_m$$ 
has rank 1 and since $\tr\bigl(A^\star B\bigr)=0$, $\tr\bigl[(\lambda A+\mu B)^\star (\lambda A+\mu
B)\bigr]=(m-1)(|\lambda|^2+|\mu|^2)$. In other words
$N_{\lambda,\mu}:=M_{\lambda,\mu}-(|\lambda|^2+|\mu|^2)1_m$ has rank $1$, where we denoted $(\lambda
A+\mu B)^\star (\lambda A+\mu B)$ by $M_{\lambda,\mu}$. 

There exists a hyperplane $E$ (resp. $F$) in $\C^m$ such that the endomorphism of $\C^m$ whose matrix is
$A^\star A$ (resp. $B^\star B$) is the identity when restricted to $E$ (resp. $F$). 

If $E=F$ then upon replacing $A$ and $B$ by $U^\top AU$ and $U^\top BU$ for some well chosen $U\in{\rm
  U}(m)$, we may assume that
the $m$-th column vectors of $A$ and $B$ are trivial so that the $m$-th column and the
$m$-th line of $M_{\lambda,\mu}$ both are trivial. As $N_{\lambda,\mu}$ has rank $1$, this implies
that for any $\lambda,\mu$ in $\C$, the upper left $(m-1)$-by-$(m-1)$ block of $M_{\lambda,\mu}$ is equal
to $(|\lambda|^2+|\mu|^2)1_{m-1}$. As in the case when $m$ is even, the upper left  $(m-1)$-by-$(m-1)$ block
of $A^\star B$ should be equal to zero and this is impossible because the column vectors of $A$ and $B$
generate hyperplanes which must intersect non trivially. 

Assume now that $E\cap F$ has codimension $2$. We will use the notation $\la x,y\ra=x^\star y$ for the
standard Hermitian product on $\C^m$ and write $|x|^2=x^\star x$. Let us
denote by $v_1,\dots,v_m$, resp. $w_1,\dots,w_m$, the column vectors of $A$, resp. $B$. Again upon
replacing $A$ and $B$ by $U^\top AU$ and  $U^\top BU$ for some well chosen $U\in{\rm U}(m)$,   
one can assume that $v_m=0$ and that $(v_1,\dots,v_{m-1})$ and $(w_1,\dots,w_{m-2})$ are orthonormal
families.  Moreover,
$w_{m-1}$ and $w_{m}$ are linearly dependent because the bottom right $2$-by-$2$ block of $B^\star B$ must
have determinant $0$. Finally we also have $|w_{m-1}|^2+|w_{m}|^2=1$. 

The bottom right $2$-by-$2$ block of $N_{\lambda,\mu}$ is  
$$
%{\rm det}
\left(
\begin{matrix} |\mu|^2(|w_{m-1}|^2-1)+\lambda\bar\mu\la w_{m-1},v_{m-1}\ra+\bar\lambda\mu\la
  v_{m-1},w_{m-1}\ra & |\mu|^2\la w_{m-1},w_{m}\ra +\bar \lambda\mu\la v_{m-1},w_{m}\ra\\
|\mu|^2\la w_{m},w_{m-1}\ra +\lambda\bar\mu\la w_{m},v_{m-1}\ra & -|\lambda|^2+|\mu|^2(|w_{m}|^2-1)
\end{matrix} 
\right)
$$
In the determinant of this block, the coefficient of $|\mu|^4$ is equal to $0$ since 
$$(1-|w_{m-1}|^2)(1-|w_{m}|^2)-|\la w_{m},w_{m-1}\ra|^2=|w_{m-1}|^2 |w_{m}|^2-|\la w_{m},w_{m-1}\ra|^2
$$
and $w_{m}$ and $w_{m-1}$ are linearly dependent.

The coefficient of $|\lambda|^2|\mu|^2$ is
$$1-|w_{m-1}|^2-|\la w_{m},v_{m-1}\ra|^2
$$
and by Schwarz inequality, it vanishes if and only if $w_{m}$ and $v_{m-1}$ are proportional since
$|w_{m-1}|^2+|\la w_{m},v_{m-1}\ra|^2\leq |w_{m-1}|^2+|w_{m}|^2=1$. In this case, there exist
complex numbers $a$ and $b$ such that $w_{m-1}=a\,v_{m-1}$, $w_{m}=b\,v_{m-1}$ and $|a|^2+|b|^2=1$. The
above determinant is then equal to $-(|\lambda|^2+|\mu|^2)(\lambda\bar\mu \bar a+\bar\lambda\mu a)$
and vanishes identically if and only if $a=0$. 
 
So $a=0$ and $|b|=1$. This immediately implies that 
$N_{\lambda,\mu}$ is block diagonal with a $(m-2)$-by-$(m-2)$ upper left block and a $2$-by-$2$
bottom right block because $A^\star A$ et $B^\star B$ have the same block decomposition, hence $A^\star B$ and
$B^\star A$ too, since $w_{m-1}=v_{m}=0$ and $w_{m}=b\,v_{m-1}$. The $2$-by-$2$ bottom right block of
$N_{\lambda,\mu}$ is equal to 
$$\left(
\begin{matrix}
-|\mu|^2 & b\bar\lambda\mu\\
\bar b\lambda\bar\mu & -|\lambda|^2
\end{matrix}
\right)
$$
hence has rank $1$ for each $(\lambda,\mu)\not=(0,0)$. As the matrix $N_{\lambda,\mu}$ has rank 1
for all $(\lambda,\mu)\not=(0,0)$, we must have $\la v_i,w_j\ra=0$ for any $1\leq i,j\leq m-2$. If
$m\geq 5$, this is impossible since the two subspaces of codimension 2 generated
respectively by the families $\{v_i\}_{1\leq i\leq m-2}$ and $\{w_i\}_{1\leq i\leq m-2}$ have a
non trivial intersection. 
\end{proof}

\subsection{Representations in Hermitian groups without exceptional factors}\hfill

\smallskip

It is easy to generalize the statement of Theorem~\ref{main} to the case where the lattice $\G$ is
assumed uniform but not torsion free, and the target Lie group $G$ is assumed to be a semisimple Lie
group of Hermitian type without compact or exceptional factors. By this we mean that $G$ is an
almost-direct product of simple noncompact Lie groups of Hermitian type which
are each isogenous to one of the classical groups we have been considering. 

In this case, by Selberg's lemma, there is a normal subgroup $\G'$ of finite index $d$ in $\G$ such
that $\G'$ is torsion free and the
representation $\r'=\r_{|\G'}$ is a product of $k$ representations $\r'_i:\G'\pfd G_i$, where the $G_i$'s are
classical Hermitian noncompact Lie groups. One defines the Toledo invariant of $\rho$ to be $\frac
1d\,\tau(\r')$. Since $\vol(\G\backslash\H)=\frac 1d\,\vol(\G'\backslash\H)$, the representation $\r$
is maximal if and only if the representation $\r'$ is.     
Since $\tau(\rho')=\sum_{i=1}^k\tau(\rho_i')$ and $\rk_\R\, G=\sum_{i=1}^k\rk_\R\, G_i$, $\r'$ is
maximal if and only if each $\r'_i$ is. Therefore in this case $G_i=\SU(p_i,q_i)$ with $p_i\geq
q_in$ for all $i$ and there is a $\r$-equivariant holomorphic or antiholomorphic
map from $\H$ to the symmetric space $\Y=\Pi_{i=1}^k\Y_{p_i,q_i}$ associated to $G$.

\begin{bibdiv}
\begin{biblist}

\bib{BMQ}{article}{
author={Bogomolov, F. A.}, author={McQuillan, M. L.},
title={Rational curves on foliated varieties},
journal={IHES preprint},
date={2001},
}

\bib{BGPG1}{article}{
author={Bradlow, S.~B.}, author={Garcia-Prada, O.}, author={Gothen, P.~B.},
title={Surface group representations and ${\rm U}(p,q)$-Higgs bundles},
journal={J. Diff. Geom.},
volume={64},
date={2003},
pages={111--170},
}

\bib{BGPG2}{article}{
author={Bradlow, S.~B.}, author={Garcia-Prada, O.}, author={Gothen, P.~B.},
title={Maximal surface group representations in isometry groups of classical Hermitian
  symmetric spaces},
journal={Geom. Dedicata},
volume={122},
date={2006},
pages={185--213},
}

\bib{BH}{book}{ 
author={Bridson, M.~R.}, 
author={Haefliger, A.}, 
title={Metric spaces of non-positive curvature},
series={Grundlehren der Mathematischen Wissenschaften},
volume={319},
publisher={Springer-Verlag},
address={Berlin},
date={1999},
}

\bib{BI07}{article}{
author={Burger, M.}, 
author={Iozzi, A.},
title={Bounded differential forms, generalized Milnor-Wood inequality and an application to
  deformation rigidity},
journal={Geom. Dedicata},
volume={125},
date={2007},
pages={1--23},
}

\bib{BI08}{article}{
author={Burger, M.}, 
author={Iozzi, A.}, 
title={A measurable Cartan theorem and applications to deformation rigidity in complex
  hyperbolic geometry},
journal={Pure Appl. Math. Q.},
volume={4},
date={2008},
pages={181--202},
}

\bib{BIW09}{article}{ 
author={Burger, M.}, 
author={Iozzi, A.}, 
author={Wienhard, A.},
title={Tight homomorphisms and Hermitian symmetric spaces},
journal={Geom. Funct. Anal.},
volume={19},
date={2009},
pages={678--721},
}

\bib{BIW}{article}{ 
author={Burger, M.}, 
author={Iozzi, A.}, 
author={Wienhard, A.},
title={Surface group representations with maximal Toledo invariant},
journal={Ann. of Math.},
volume={172},
date={2010},
pages={517--566},
}

\bib{CW}{article}{
author={Cao, J.-G.}, 
author={Wong, P.-M.}, 
title={Finsler geometry of projectivized vector bundles},
journal={J. Math. Kyoto Univ. },
volume={43},
date={2003},
pages={369--410},
}

\bib{CorletteFlatGBundles}{article}{
author={Corlette, K.}, 
title={Flat $G$-bundles with canonical metrics},
journal={J. Diff.  Geom.},
volume={28},
date={1988},
pages={361--382},
}

\bib{CorletteArchimedean}{article}{
author={Corlette, K.}, 
title={Archimedean superrigidity and hyperbolic geometry},
journal={Ann. of Math.},
volume={135},
date={1992},
pages={165--182},
}

\bib{Dani}{article}{
author={Dani, S.}, 
title={A simple proof of Borel's density theorem},
journal={Math. Z.},
volume={174},
date={1980},
pages={81--94},
}

\bib{Eberlein}{book}{
author={Eberlein, P.}, 
title={Geometry of nonpositively curved manifolds},
series={Chicago Lectures in Mathematics},
publisher={University of Chicago Press},
address={Chicago, IL},
date={1996},
}

\bib{Fischer}{book}{
author={Fischer, G.}, 
title={Complex analytic geometry},
series={Lecture Notes in Mathematics},
publisher={Springer-Verlag},
address={Berlin-New York},
volume={538},
date={1976},
}

\bib{God}{book}{
author={Godbillon, C.}, 
title={Feuilletages. \'Etudes g\'eom\'etriques},
series={Progress in Mathematics},
publisher={Birkh\"auser Verlag},
address={Basel},
volume={98},
date={1991},
}

\bib{GoldmanThesis}{thesis}{
author={Goldman, W.~M.}, 
title={Discontinuous groups and the Euler class},
address={University of California at Berkeley},
type={Ph.D. Thesis},
date={1980},
}

\bib{GoldmanComponents}{article}{
author={Goldman, W.~M.}, 
title={Topological components of spaces of representations},
journal={Invent. Math.},
volume={93},
date={1988},
pages={557--607},
}

\bib{GoldmanMillsonLocalRigidity}{article}{
author={Goldman, W.~M.}, 
author={Millson, J.~J.}, 
title={Local rigidity of discrete groups acting on complex hyperbolic space},
journal={Invent. Math.},
volume={88},
date={1987},
pages={495--520},
}

\bib{GriffithsSchmid}{article}{
author={Griffiths, P.},
author={Schmid, W.},
title={Locally homogeneous complex manifolds},
journal={Acta Mathematica},
date={1969},
volume={123},
pages={253--302},
}

\bib{Gromov1}{article}{
author={Gromov, M.},
title={Foliated Plateau problem, Part I: Minimal varieties},
journal={Geom. Funct. Anal.},
volume={1},
date={1991},
pages={14--79},
}

\bib{Gromov2}{article}{
author={Gromov, M.},
title={Foliated Plateau problem, Part II: Harmonic maps of foliations},
journal={Geom. Funct. Anal.},
volume={1},
date={1991},
pages={253--320},
}

\bib{GPS}{article}{
author={Gromov, M.}, 
author={Piatetski-Shapiro, I.}, 
title={Nonarithmetic groups in Lobachevsky spaces},
journal={Inst. Hautes Études Sci. Publ. Math.},
volume={66},
date={1988},
pages={93--103},
}

\bib{GromovSchoen}{article}{
author={Gromov, M.}, 
author={Schoen, R.}, 
title={Harmonic maps into singular spaces and {$p$}-adic superrigidity for lattices in groups of rank one},
journal={Inst. Hautes Études Sci. Publ. Math.},
volume={76},
date={1992},
pages={165--246},
}

\bib{GW}{article}{
author={Guichard, O.}, 
author={Wienhard, A.}, 
title={Anosov representations: domains of discontinuity and applications},
journal={ Invent. Math.},
volume={190},
date={2012},
pages={357--438},
}

\bib{Hernandez}{article}{
author={Hernandez, L.}, 
title={Maximal representations of surface groups in bounded symmetric domains},
journal={Trans. Amer. Math. Soc.},
volume={324},
date={1991},
pages={405--420},
}

\bib{JohnsonMillson}{article}{
  author={Johnson, D.}, 
  author={Millson, J.~J.}, 
  title={Deformation spaces associated to compact hyperbolic manifolds},
  book={
    title={Discrete Groups in Geometry and Analysis ({N}ew {H}aven, {C}onn., 1984)},
    series={Progress in  Mathematics},
    publisher={Birkh\"auser},
    address={Boston, MA},
    volume={67},
  },
  date={1987},
  pages={48--106},  
}

\bib{Klingler}{article}{
author={Klingler, B.}, 
title={Local rigidity for complex hyperbolic lattices and Hodge theory},
journal={Invent. Math.},
volume={184},
date={2011},
pages={445--498},
}

\bib{K}{book}{
author={Kobayashi, S.}, 
title={Differential geometry of complex vector bundles},
publisher={Princeton University Press},
date={1987},
}

\bib{KozMobRank1}{article}{
author={Koziarz, V.}, 
author={Maubon, J.}, 
title={Harmonic maps and representations of non-uniform lattices of {${\rm PU}(m,1)$}},
journal={Annales de l'Institut Fourier (Grenoble)},
volume={58},
date={2008},
pages={507--558},
}

\bib{KozMobRank2}{article}{
author={Koziarz, V.}, 
author={Maubon, J.}, 
title={Representations of complex hyperbolic lattices into rank 2 classical Lie Groups of
  Hermitian type},
journal={Geom. Dedicata},
volume={137},
date={2008},
pages={85--111 },
}

\bib{KozMobequidistrib}{article}{
author={Koziarz, V.}, 
author={Maubon, J.}, 
title={On the equidistribution of totally geodesic submanifolds in locally symmetric spaces and application to boundedness results for negative curves and exceptional divisors},
journal={ arXiv:1407.6561},
date={2014},
}

\bib{Margulis}{book}{
author={Margulis, G.~A.}, 
title={Discrete subgroups of semisimple Lie groups},
series={Ergebnisse der Mathematik und ihrer Grenzgebiete},
volume={17},
date={1991},
publisher={Springer-Verlag},
address={Berlin},
}

\bib{MarkmanXia}{article}{
author={Markman, E.}, 
author={Xia, E.~Z.}, 
title={The moduli of flat {${\rm PU}(p,p)$}-structures with large {T}oledo invariants},
journal={Math. Z.},
volume={240},
date={2002},
pages={95--109},
}

\bib{Maubon}{article}{
author={Maubon, J.}, 
title={Higgs bundles and representations of complex hyperbolic lattices},
book={
title={Handbook of group actions, vol. II},
series={Advanced Lectures in Mathematics},
publisher={International Press and Higher Education Press},
volume={32},
},
date={2015},
pages={201--244},
}

\bib{Mok}{article}{
author={Mok, N.}, 
title={On holomorphic immersions into K\"ahler manifolds of constant holomorphic sectional curvature},
journal={Sci. China Ser. A},
volume={48},
date={2005},
pages={123--145 },
}
 
\bib{P}{article}{
author={Payne, T.}, 
title={Closures of totally geodesic immersions into locally symmetric spaces of noncompact
  type},
journal={Proc. Amer. Math. Soc.},
volume={127},
date={1999},
pages={829--833},
}

\bib{Per}{article}{
author={Pereira, J. V.},
title={Global stability for holomorphic foliations on K\"ahler manifolds},
journal={Qual. Theory Dyn. Syst.},
volume={2},
date={2001},
pages={381--384}
}

\bib{Pop}{article}{
author={Popovici, D.},
title={A simple proof of a theorem by Uhlenbeck and Yau},
journal={Math. Z.},
volume={250},
date={2005},
pages={855--872},
}

\bib{Pozzetti}{article}{
author={Pozzetti, M.~B.}, 
title={Maximal representations of complex hyperbolic lattices in {${\rm SU}(m,n)$}},
journal={Geom. Funct. Anal.},
volume={25},
date={2015},
pages={1290--1332},
}

\bib{Raghunathan}{book}{
author={Raghunathan, M.~S.}, 
title={Discrete subgroups of Lie groups},
series={Ergebnisse der Mathematik und ihrer Grenzgebiete},
volume={68},
publisher={Springer-Verlag},
address={New York-Heidelberg},
date={1972},
}

\bib{RatnerTopo}{article}{
author={Ratner, M.}, 
title={Raghunathan’s topological conjecture and distributions of unipotent flows},
journal={Duke Math. J.},
volume={63},
date={1991},
pages={235--280},
}

\bib{RatnerMeasure}{article}{
author={Ratner, M.}, 
title={On Raghunathan's measure conjecture},
journal={Ann. of Math.},
volume={134},
date={1991},
pages={545--607},
}

\bib{Richardson}{article}{
author={Richardson, R. W.}, 
title={Conjugacy classes of $n$-tuples in Lie algebras and algebraic groups},
journal={Duke Math. J.},
volume={57},
date={1988},
pages={1--35},
}

\bib{Ro80}{article}{
author={Royden, H.~L.}, 
title={The Ahlfors-Schwarz lemma in several complex variables},
journal={Comment. Math. Helvetici},
volume={55},
date={1980},
pages={547--558},
}

%{\bf\red \bib{Sampson}{article}{
%author={Sampson, J.~H.}, 
%title={Some properties and applications of harmonic mappings},
%journal={Ann. Sci. {\'E}cole Norm. Sup.},
%volume={11},
%date={1978},
%pages={211--228},
%}}

\bib{Sampson}{article}{
   author={Sampson, J.~H.},
   title={Applications of harmonic maps to K\"ahler geometry},
   conference={
      title={Complex differential geometry and nonlinear differential
      equations },
      address={Brunswick, Maine},
      date={1984},
   },
   book={
      series={Contemp. Math.},
      volume={49},
      publisher={Amer. Math. Soc., Providence, RI},
   },
   date={1986},
   pages={125--134},
   %review={\MR{833809}},
   %doi={10.1090/conm/049/833809},
}

\bib{Shah}{article}{
author={Shah, N.}, 
title={Uniformly distributed orbits of certain flows on homogeneous spaces}, 
journal={Math. Ann.},
volume={289},
date={1991},
pages={315--334},
}

\bib{Sibley}{article}{
author={Sibley, B.}, 
title={Asymptotics of the Yang–Mills flow for holomorphic vector bundles over K\"ahler manifolds: The
  canonical structure of the limit},
journal={J. reine angew. Math., Ahead of Print, DOI 10.1515/ crelle-2013-0063},
date={2013},
pages={},
}

\bib{S1}{article}{
author={Simpson, C.}, 
title={Constructing variations of Hodge structure using Yang-Mills theory and applications to
uniformization},
journal={J. Amer. Math. Soc.},
volume={1},
date={1988},
pages={867--918},
}

\bib{S2}{article}{
author={Simpson, C.}, 
title={Higgs bundles and local systems},
journal={Inst. Hautes \'Etudes Sci. Publ. Math.},
volume={75},
date={1992},
pages={5--95},
}

\bib{Siu}{article}{
author={Siu, Y.-T.}, 
title={The complex-analyticity of harmonic maps and the strong rigidity of compact K{\"a}hler
  manifolds},
journal={Ann. of Math.},
volume={112},
date={1980},
pages={73--111},
}

\bib{Sullivan}{article}{
author={Sullivan, D.},
title={Cycles for the dynamical study of foliated manifolds and complex manifolds},
journal={Invent. Math.},
volume={36},
date={1976},
pages={225--255},
}

\bib{ToledoHarmonic}{article}{
author={Toledo, D.}, 
title={Harmonic maps from surfaces to certain {K}aehler manifolds},
journal={Mathematica Scandinavica},
volume={45},
date={1979},
pages={13--26},
}

\bib{ToledoRepresentations}{article}{
author={Toledo, D.}, 
title={Representations of surface groups in complex hyperbolic space},
journal={J. Diff. Geom.},
volume={29},
date={1989},
pages={125--133},
}

\bib{UhlenbeckYau}{article}{
   author={Uhlenbeck, K.},
   author={Yau, S.-T.},
   title={On the existence of Hermitian-Yang-Mills connections in stable
   vector bundles},
   journal={Comm. Pure Appl. Math.},
   volume={39},
   date={1986},
   pages={S257--S293},
   }

\bib{westwick}{article}{
author={Westwick, R.}, 
title={Spaces of linear transformations of equal rank},
journal={Linear Algebra and Appl.},
volume={5},
date={1972},
pages={49--64},
}

\bib{Wolf}{article}{
author ={Wolf, J. A.},
journal={Bull. Amer. Math. Soc.},
pages={1121--1237},
title={The action of a real semisimple group on a complex flag manifold. I: Orbit structure and holomorphic arc components},
volume={75},
year={1969}
}

\bib{Xia}{article}{
author={Xia, E.~Z.}, 
title={The moduli of flat {${\rm PU}(2,1)$} structures on {R}iemann surfaces},
journal={Pacific J. Math.},
volume={195},
date={2000},
pages={231--256},
}

\bib{You}{article}{
author={Youla, D.~C.}, 
title={A normal form for a matrix under the unitary congruence group},
journal={Canad. J. Math.},
volume={13},
date={1961},
pages={694--704}, 
}

\end{biblist}
\end{bibdiv}

\end{document}